

\documentclass[12pt]{article} 

\usepackage{amsmath}
\usepackage{dcolumn}
\usepackage{endnotes}
\usepackage{tabularx}

\usepackage{graphicx}
\usepackage{epstopdf}
\usepackage{color}
\usepackage{amsthm}
\usepackage{amssymb}
 \usepackage{mathrsfs}
\usepackage{chemarrow}
 \usepackage{amssymb}

  \theoremstyle{plain}
\newtheorem{theorem}{Theorem}[section]

\newtheorem{lemma}[theorem]{Lemma}
\newtheorem{corollary}[theorem]{Corollary}

\theoremstyle{definition}
\newtheorem{definition}[theorem]{Definition}
\newtheorem{example}[theorem]{Example}

\usepackage[utf8]{inputenc} 


\usepackage{geometry} 
\geometry{a4paper} 

\usepackage{graphicx} 


\usepackage{booktabs} 
\usepackage{array} 
\usepackage{paralist} 
\usepackage{verbatim} 
\usepackage{subfig} 

\usepackage{fancyhdr} 
\pagestyle{fancy} 
\lhead{}\chead{}\rhead{}
\lfoot{}\cfoot{\thepage}\rfoot{}

\usepackage{sectsty}
\allsectionsfont{\sffamily\mdseries\upshape} 

\usepackage[nottoc,notlof,notlot]{tocbibind} 
\usepackage[titles,subfigure]{tocloft} 




\title{$(L,M)$-fuzzy  convex structures
\thanks{The project is supported by the National Natural Science Foundation of China (11371002) and Specialized
Research Fund for the Doctoral Program of Higher Education (20131101110048). }
}

\author{Fu-Gui Shi$^{1}$, Zhen-Yu Xiu$^{2}$\footnote{Corresponding author. Email:xyz198202@163.com}  \\
\small\it$^1$ School of Mathematics and Statistics, Beijing Institute of Technology,\\ \small\it Beijing 100081, China  \\
\small\it  $^2$College of Applied Mathematics, Chengdu University of Information Technology,\\ \small\it   Chengdu 610000, China\\
}

\date{} 

\begin{document}
\maketitle

\begin{abstract}
In this paper,
 the notion of $(L,M)$-fuzzy convex structures is
introduced.   It is a
generalization of  $L$-convex structures and $M$-fuzzifying convex structures.
In our  definition of $(L,M)$-fuzzy   convex structures,  each $L$-fuzzy
subset can be regarded as an $L$-convex set to some degree.    The notion of convexity preserving functions is also generalized to  lattice-valued case.   Moreover,  under the framework   of  $(L,M)$-fuzzy convex structures,  the concepts of   quotient structures, substructures and   products  are  presented  and  their fundamental  properties are discussed.   Finally, we  create a functor $\omega$ from $\mathbf{MYCS}$
 to $\mathbf{LMCS}$ and show that
 there exists an adjunction
between $\mathbf{MYCS}$ and  $\mathbf{LMCS}$, where $\mathbf{MYCS}$ and   $\mathbf{LMCS}$  denote the category of $M$-fuzzifying convex structures,
 and the category of $(L,M)$-fuzzy convex structures,
 respectively.
\end{abstract}

{\bf Keywords:}
$(L,M)$-fuzzy convex structure,
$(L,M)$-fuzzy convexity preserving function,  quotient structures, substructures,
 products

%
%

\section{\bf Introduction  and preliminaries}

Convexity theory has been accepted to be of increasing importance in recent years
in the study of extremum problems in many areas of applied mathematics. The
concept of convexity which was mainly defined and studied in $\mathbb{R}^{n}$ in the pioneering works of Newton, Minkowski and others as described in \cite{Berger}, now finds a place in several other mathematical structures such as vector spaces, posets, lattices, metric spaces, graphs and median algebras.
 This development is motivated by not only the need for an abstract theory of convexity generalizing the classical theorems in $\mathbb{R}^{n}$ due to Helly, Caratheodory etc; but also by the necessity to unify geometric aspects of all these mathematical structures.   Abstract convexity theory is a branch of mathematics dealing with set-theoretic structures  satisfying axioms similar to that usual convex sets fulfill. Here, by ``usual convex sets",  we mean convex sets in real linear spaces. In a general setting, the axioms of abstract convexity
are the following:

(1) The empty set and the universe set are convex;

(2) The intersection of a nonempty collection of convex sets is convex;

(3) The union of a chain of convex sets is convex.

Clearly, usual convex sets have properties (1)-(3), but there are many other collections of sets, coming from various types of mathematical objects, that satisfy conditions (1)-(3), such as  convexities in lattices and in Boolean algebras \cite{Van,Varlet}, convexities in metric spaces and graphs  \cite{Lassak,Soltan}. Also, convex structures appeared naturally in topology, especially in the theory of supercompact spaces \cite{Mill}.


The notion of a fuzzy subset was introduced by Zadeh \cite{Zadeh} and then fuzzy subsets have been applied to various branches of mathematics.
In 1994,   Rosa generalized the notion of a convex structure to a fuzzy  convex structure $(X, \mathfrak{C} )$  in \cite{{Rosa1},{Rosa2}}
and  $\mathfrak{C}$  was defined as a crisp family of  fuzzy subsets of a set $X$ satisfying certain axioms.  For convenience,  we  call this  fuzzy  convex structure an $I$-convex structure.
In 2009,   Maruyama generalized $I$-convex structures
 to  $L$-convex structures    in   \cite{Maruyama}, where  $L$ is  a completely distributive lattice.     
  In 2014,  Shi and Xiu \cite{Shi6} introduced  a new approach to the fuzzification of convex structures,  which  is called an $M$-fuzzifying convex structure.
 An $M$-fuzzifying  convex structure  is   a  pair $(X,\mathscr{C})$,  where $\mathscr{C}$   is  a mapping from $\mathbf{2}^{X}$ to $M$ satisfying three   axioms.  Recently,   Shi and Li \cite{shili} generalized the notion of restricted hull operators in classical convex spaces to $M$-fuzzifying restricted hull operators and used it to characterize $M$-fuzzifying convex structures. Pang and Shi \cite{PS} introduced several types of $L$-convex spaces, including stratified $L$-convex spaces, convex-generated $L$-convex spaces, weakly induced $L$-convex spaces and induced $L$-convex spaces and  discussed their relations from a categorical aspect.

In this paper, based on the  idea of \cite{Kubiak} and \cite{Sostak},  combining $L$-convex structures and $M$-fuzzifying convex structures  and   based on  complete  distributive  lattices $L$ and $M$,
 we present
 a more general approach to the fuzzification of
convex structures. More specifically,
we
define
an $(L,M)$-fuzzy convexity on a nonempty set $X$ by
means of a mapping $\mathcal{C} : L^X\rightarrow M$ satisfying three
axioms.   It is a
generalization of  $L$-convex structures and $M$-fuzzifying convex structures. Each $L$-fuzzy subset of $X$ can be regarded as an
$L$-convex set to some degree.



Throughout this paper,  unless otherwise stated,    both $L$ and $M$  denote complete  distributive  lattices, $I=[0,1]$, $\mathbf{2}=\{0,1\}$  and $X$ is a nonempty
set. $L^X$ is the set of all $L$-fuzzy sets (or $L$-sets for short)
on $X$. We often do not distinguish a crisp subset $A$ of $X$ and
its  characteristic function $\chi_A$. The smallest element and the largest element in $L^X$ are denoted by $\chi_\emptyset$ and $\chi_X$, respectively.  The smallest element and the
largest element in $M (L)$ are denoted by $\bot_M (\bot_L)$ and $\top_M (\top_L)$,
respectively.  
We also adopt the convention that $\bigwedge\emptyset=\top_M$.

The binary relation $\prec$ in $M$ is defined as follows: for
$a,b\in M$, $a\prec b$ if and only if for every subset $D\subseteq
M$, the relation $b\leq  \sup D$ always implies the existence of
$d\in D$ with $a\leq  d$ \cite{Dwinger}. $\{a\in M: a\prec b\}$
is called the greatest minimal family of $b$ in the sense of
\cite{Wang}, denoted by $\beta(b)$.
 Moreover,  the binary relation $\prec^{op}$ in $M$ is defined as follows: for
$a,b\in M$, $a \prec^{op} b$ if and only if for every subset $D\subseteq
M$, the relation $\wedge D\leq a$ always implies the existence of
$d\in D$ with $d\leq b$.   $\{b\in M: a\prec^{op} b\}$
is called the greatest maximal family of $a$ in the sense of
\cite{Wang}, denoted by $\alpha(a)$.
 In a completely
distributive lattice $M$, there exist $\alpha(b)$ and $\beta(b)$ for
each $b\in M$, and $b=\bigvee\beta(b)=\bigwedge\alpha(b)$ (see
\cite{Wang}).


For $a\in L$ and $A\in L^X$,  we use the
following notations:\\
(1) $A_{[a]}=\{x\in X: a\leq A(x)\}$.
(2) $A^{[a]}=\{x\in X: a\not\in \alpha(A(x))\}$.\\
(3) $A_{(a)}=\{x\in X: a\in \beta( A(x))\}$.



Some properties of these cut sets can be found in
\cite{Huang,Negoita,Shi1,Shi4}.

Let $f:X\rightarrow Y$ be a mapping. Define
$f^{\rightarrow}_{L}:L^{X}\rightarrow L^{Y}$ and
$f^{\leftarrow}_{L}:L^{Y}\rightarrow L^{X}$ by
$f^{\rightarrow}_{L}(A)(y)=\bigvee\limits_{f(x)=y}A(x)$ for $A\in L^{X}$
and $y\in Y$, and $f^{\leftarrow}_{L}(B)=B\circ f$ for $B\in L^{Y},$
respectively.

\begin{theorem} [\cite{Wang}]  For $\{a_i:i\in \Omega\}\subseteq M$,
\begin{enumerate}
\item[$(1)$] $\alpha\left(\bigwedge\limits_{i\in
\Omega}a_i\right)=\bigcup\limits_{i\in \Omega}\alpha(a_i)$, i.e.,
$\alpha$ is a $\bigwedge-\bigcup$ mapping.

\item[$(2)$] $\beta\left(\bigvee\limits_{i\in
\Omega}a_i\right)=\bigcup\limits_{i\in \Omega}\beta(a_i)$,  i.e.,
$\beta$ is a $\bigvee-\bigcup$ mapping.
\end{enumerate}
\end{theorem}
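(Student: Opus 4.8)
The plan is to prove both parts together by exploiting that they are order-dual: replacing $M$ by the opposite lattice $M^{\mathrm{op}}$ interchanges $\bigwedge$ with $\bigvee$ and $\prec^{\mathrm{op}}$ with $\prec$ (indeed $a\prec^{\mathrm{op}}b$ in $M$ is precisely $b\prec a$ in $M^{\mathrm{op}}$), hence interchanges $\alpha$ with $\beta$; so it suffices to prove $(2)$ and then apply it inside $M^{\mathrm{op}}$ to recover $(1)$. For $(2)$, one inclusion is routine and uses only that $M$ is a complete lattice: if $c\in\beta(a_j)$ and $D\subseteq M$ satisfies $\bigvee_{i\in\Omega}a_i\leq\sup D$, then $a_j\leq\sup D$, so some $d\in D$ has $c\leq d$; thus $c\prec\bigvee_{i\in\Omega}a_i$. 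Equivalently, $\beta$ is monotone, so $\beta(a_j)\subseteq\beta\big(\bigvee_{i\in\Omega}a_i\big)$ for every $j$, giving $\bigcup_{i\in\Omega}\beta(a_i)\subseteq\beta\big(\bigvee_{i\in\Omega}a_i\big)$.

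For the reverse inclusion $\beta\big(\bigvee_{i\in\Omega}a_i\big)\subseteq\bigcup_{i\in\Omega}\beta(a_i)$ --- the substantive direction --- I would first isolate a reformulation of $\prec$: for all $a,b\in M$,
\[ a\prec b\iff b\not\leq\bigvee\{x\in M:\ a\not\leq x\}. \]
This follows by contraposition, since $a\not\prec b$ says exactly that some $D\subseteq M$ has $b\leq\sup D$ and $a\not\leq d$ for all $d\in D$; the latter condition means $D\subseteq\{x:\ a\not\leq x\}$, so such a $D$ exists precisely when $b\leq\bigvee\{x:\ a\not\leq x\}$. Granting this, suppose $c\prec\bigvee_{i\in\Omega}a_i$ but $c\not\prec a_i$ for every $i$; then $a_i\leq\bigvee\{x:\ c\not\leq x\}$ for every $i$, hence $\bigvee_{i\in\Omega}a_i\leq\bigvee\{x:\ c\not\leq x\}$, contradicting $c\prec\bigvee_{i\in\Omega}a_i$ by the reformulation. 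Hence $c\in\beta(a_i)$ for some $i$. (Alternatively one can use the identity $a_i=\bigvee\beta(a_i)$ noted above: then $\bigvee_{i\in\Omega}a_i=\sup\{d\in M:\ d\prec a_i\text{ for some }i\in\Omega\}$, so $c\prec\bigvee_{i\in\Omega}a_i$ produces some $d$ with $c\leq d\prec a_i$, and $c\prec a_i$ then follows because $\prec$ is downward closed in its first argument.)

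The only real subtlety I anticipate is stating the reformulation of $\prec$ correctly and disposing of the degenerate cases cleanly --- e.g. $c=\bot_M$, where $\{x:\ c\not\leq x\}=\emptyset$ and $\bigvee\emptyset=\bot_M$, or $\Omega=\emptyset$, where $\bigvee_{i\in\Omega}a_i=\bot_M$ and both sides of $(2)$ are empty. Once the reformulation and its dual $a\prec^{\mathrm{op}}b\iff\bigwedge\{x\in M:\ x\not\leq b\}\not\leq a$ are in hand, both inclusions of $(2)$, and hence of $(1)$ by duality, follow at once. I would also remark that this argument needs only completeness of $M$; complete distributivity is what underpins the companion identities $b=\bigvee\beta(b)=\bigwedge\alpha(b)$, which are used elsewhere rather than in these preservation properties.
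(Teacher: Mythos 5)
Your proof is correct. Note first that the paper itself gives no argument for this statement: it is quoted as a preliminary from Wang's work on topological molecular lattices, so there is no in-paper proof to compare against; I am judging your argument on its own. Relative to the paper's definitions ($\beta(b)=\{a: a\prec b\}$ and $\alpha(a)=\{b: a\prec^{\mathrm{op}}b\}$), every step checks out: the duality $a\prec^{\mathrm{op}}b$ in $M$ iff $b\prec a$ in $M^{\mathrm{op}}$ correctly reduces $(1)$ to $(2)$; the easy inclusion via monotonicity of $\beta$ is fine; and the reformulation $a\prec b\iff b\not\leq\bigvee\{x\in M:\ a\not\leq x\}$ is exactly right (the negation of $a\prec b$ asserts the existence of a witness $D\subseteq\{x:\ a\not\leq x\}$ with $b\leq\sup D$, and the largest candidate $D=\{x:\ a\not\leq x\}$ works iff any does). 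From it, $c\prec\bigvee_i a_i\iff\bigvee_i a_i\not\leq\bigvee\{x:\ c\not\leq x\}\iff a_j\not\leq\bigvee\{x:\ c\not\leq x\}$ for some $j$ $\iff c\prec a_j$ for some $j$, which is the whole of $(2)$ in one line. Your closing observation is also a genuine improvement in precision over the usual citation: these two preservation identities hold in an arbitrary complete lattice, and complete distributivity enters only through the companion facts $b=\bigvee\beta(b)=\bigwedge\alpha(b)$ (and, in Wang's original development, through the identification of $\{a:a\prec b\}$ with the greatest minimal family, which the present paper sidesteps by taking that set as the definition). The degenerate cases ($\Omega=\emptyset$, $c=\bot_M$) are handled consistently with the paper's later use of $\alpha(\top_M)=\emptyset$.
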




\begin{theorem}[\cite{Huang,Shi4}]\label{11a} \label{12q} For each
$L$-fuzzy set $A$ in $L^X$, we have:\\
$(1)$  $ A=\bigvee\limits_{a\in L}(a\wedge
A_{[a]})=\bigwedge\limits_{a\in L}(a\vee A^{[a]})$.  $(2)$  $ \forall a\in L$, $A_{[a]}=\bigcap\limits_{b\in
\beta(a)}A_{[b]}=\bigcap\limits_{b\in
\beta(a)}A_{(b)}$. \\
$(3)$  $\forall a\in L$,  $A^{[a]}=\bigcap\limits_{a\in
\alpha(b)}A^{[b]}$.\ \ \ \ \ \ \  \  \  \  \  \   $(4)$  $ \forall a\in L$, $A_{(a)}=\bigcup\limits_{a\in
\beta(b)}A_{[b]}.$

\end{theorem}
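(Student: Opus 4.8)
The plan is to verify each identity pointwise at an arbitrary $x\in X$, reducing everything to lattice facts about $L$ phrased through the greatest minimal family $\beta$ and the greatest maximal family $\alpha$. As preparation I would first record three routine facts about the relations $\prec$ and $\prec^{op}$ on $L$: (i) $a\prec b\Rightarrow a\le b$ and $a\prec^{op}b\Rightarrow a\le b$, obtained by applying the definitions to the singleton sets $D=\{b\}$, $D=\{a\}$; (ii) monotonicity, e.g. $a\prec b$ and $b\le c$ imply $a\prec c$, together with its three analogues for $\prec$ and $\prec^{op}$; (iii) the identities $\beta(a)=\bigcup_{b\in\beta(a)}\beta(b)$ and $\alpha(a)=\bigcup_{b\in\alpha(a)}\alpha(b)$, which follow at once from Theorem 1.1 (that $\beta$ is a $\bigvee$--$\bigcup$ map and $\alpha$ a $\bigwedge$--$\bigcup$ map) together with the decompositions $a=\bigvee\beta(a)$, $a=\bigwedge\alpha(a)$; for instance $\alpha(a)=\alpha\bigl(\bigwedge\alpha(a)\bigr)=\bigcup_{b\in\alpha(a)}\alpha(b)$. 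Unpacked, (iii) says precisely that $a\prec^{op}c$ holds iff $a\prec^{op}b$ and $b\prec^{op}c$ for some $b$, and dually for $\prec$; this one statement encodes both transitivity and the interpolation property of these relations.

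For (1) I compute directly. Since $a\wedge A_{[a]}$ is the $L$-set equal to $a$ on $A_{[a]}$ and to $\bot_L$ off it, $\bigl(\bigvee_{a\in L}(a\wedge A_{[a]})\bigr)(x)=\bigvee\{a\in L:a\le A(x)\}=A(x)$. Dually $a\vee A^{[a]}$ equals $\top_L$ on $A^{[a]}$ and $a$ off it, while $x\notin A^{[a]}$ means exactly $a\in\alpha(A(x))$, so $\bigl(\bigwedge_{a\in L}(a\vee A^{[a]})\bigr)(x)=\bigwedge\{a\in L:a\in\alpha(A(x))\}=\bigwedge\alpha(A(x))=A(x)$, the family being nonempty since $\top_L\in\alpha(A(x))$.

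For (2)--(4) I use the descriptions $A_{[b]}=\{x:b\le A(x)\}$, $A_{(b)}=\{x:b\prec A(x)\}$, $A^{[b]}=\{x:b\notin\alpha(A(x))\}$. In (2), for $A_{[a]}=\bigcap_{b\in\beta(a)}A_{[b]}$ the inclusion $\subseteq$ uses (i) (from $a\le A(x)$ and $b\prec a$ one gets $b\le A(x)$) and $\supseteq$ uses $a=\bigvee\beta(a)$; for $A_{[a]}=\bigcap_{b\in\beta(a)}A_{(b)}$ the inclusion $\subseteq$ uses (ii) ($b\prec a\le A(x)$ gives $b\prec A(x)$) and $\supseteq$ uses (i) together with $a=\bigvee\beta(a)$. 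For (4): since $A(x)=\bigvee\{b\in L:b\le A(x)\}$, Theorem 1.1 gives $\beta(A(x))=\bigcup_{b\le A(x)}\beta(b)$, whence $x\in A_{(a)}\Leftrightarrow a\in\beta(A(x))\Leftrightarrow a\in\beta(b)$ and $b\le A(x)$ for some $b\Leftrightarrow x\in\bigcup_{a\in\beta(b)}A_{[b]}$. For (3): by (iii), $a\in\alpha(A(x))$ iff $A(x)\prec^{op}b$ and $b\prec^{op}a$ for some $b$; negating this, $x\in A^{[a]}\Leftrightarrow a\notin\alpha(A(x))\Leftrightarrow b\notin\alpha(A(x))$ for every $b$ with $b\prec^{op}a\Leftrightarrow x\in A^{[b]}$ for every $b$ with $a\in\alpha(b)\Leftrightarrow x\in\bigcap_{a\in\alpha(b)}A^{[b]}$.

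I expect no serious obstacle; the one point needing genuine care is part (3), where one must keep straight the direction of $\prec^{op}$ and the meaning of the indexing condition --- ``$a\in\alpha(b)$'' says $b\prec^{op}a$, not $a\prec^{op}b$ --- and must invoke both halves of the interpolation identity (iii) for $\alpha$. Extracting (iii) cleanly from Theorem 1.1 and the decomposition $a=\bigwedge\alpha(a)$ is really the only non-mechanical step; everything else is bookkeeping with level sets, plus attention to trivial edge cases such as the nonemptiness of $\alpha(a)$.
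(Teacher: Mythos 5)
The paper does not prove this theorem at all --- it is imported verbatim from \cite{Huang,Shi4} as a known toolbox result --- so there is no internal proof to compare against. Your argument is correct and complete on its own terms: the pointwise reduction, the extraction of the interpolation/transitivity identities $\beta(a)=\bigcup_{b\in\beta(a)}\beta(b)$ and $\alpha(a)=\bigcup_{b\in\alpha(a)}\alpha(b)$ from Theorem 1.1 together with $a=\bigvee\beta(a)=\bigwedge\alpha(a)$, and the careful bookkeeping in part (3) that ``$a\in\alpha(b)$'' means $b\prec^{op}a$ are all sound, and part (4) goes through exactly as you describe. The one inaccuracy is the parenthetical in part (1): the family $\alpha(A(x))$ \emph{can} be empty, namely when $A(x)=\top_L$, since $\alpha(\top_L)=\emptyset$ (the paper itself invokes $\alpha(\top_M)=\emptyset$ in the proof of Theorem 2.10), so the claim $\top_L\in\alpha(A(x))$ fails in that case; the identity nevertheless survives because the convention $\bigwedge\emptyset=\top_L$ gives $\bigwedge\alpha(A(x))=\top_L=A(x)$ there, so this is a cosmetic slip rather than a gap.
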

\begin{theorem}[\cite{Shi1}]\label{12w}
For a family of $L$-fuzzy sets $\{A_i:i\in \Omega\}$ in $L^X$ and $a\in L$, we have: \\ 
$(1)$ $\left(\bigwedge\limits_{i\in \Omega}
A_i\right)_{[a]}=\bigcap\limits_{i\in \Omega}\left(A_i\right)_{[a]}$.
$(2)$ $\left(\bigvee\limits_{i\in \Omega}
A_i\right)_{(a)}=\bigcup\limits_{i\in \Omega}\left(A_i\right)_{(a)}$.\\
$(3)$ $ \left(\bigwedge\limits_{i\in
\Omega} A_i\right)^{[a]}=\bigcap\limits_{i\in
\Omega}\left(A_i\right)^{[a]}$.
\end{theorem}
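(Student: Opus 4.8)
The plan is to establish the three identities separately, each by a short pointwise (element-chasing) argument. Fixing $x\in X$, I would rewrite the statement that $x$ lies in the left-hand cut set as a condition on the value of the relevant meet or join at $x$ in $L$, and then convert that into a condition that must hold for every (resp.\ for some) index $i\in\Omega$, which is precisely membership in the intersection (resp.\ the union) on the right. For $(1)$ only the order-theoretic characterisation of an infimum is needed; for $(2)$ and $(3)$ the essential input is the result of \cite{Wang} stated at the beginning of this section, namely that $\beta$ is a $\bigvee$--$\bigcup$ mapping and $\alpha$ is a $\bigwedge$--$\bigcup$ mapping.

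In detail: for $(1)$, $x\in\bigl(\bigwedge_{i\in\Omega}A_i\bigr)_{[a]}$ means $a\le\bigl(\bigwedge_{i\in\Omega}A_i\bigr)(x)=\bigwedge_{i\in\Omega}A_i(x)$, which in a complete lattice is equivalent to $a\le A_i(x)$ for every $i\in\Omega$, i.e.\ to $x\in(A_i)_{[a]}$ for every $i\in\Omega$, i.e.\ to $x\in\bigcap_{i\in\Omega}(A_i)_{[a]}$. For $(2)$, $x\in\bigl(\bigvee_{i\in\Omega}A_i\bigr)_{(a)}$ means $a\in\beta\bigl(\bigvee_{i\in\Omega}A_i(x)\bigr)$; by the $\bigvee$--$\bigcup$ property this set equals $\bigcup_{i\in\Omega}\beta(A_i(x))$, so the condition becomes $a\in\beta(A_i(x))$ for some $i\in\Omega$, i.e.\ $x\in\bigcup_{i\in\Omega}(A_i)_{(a)}$. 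For $(3)$, $x\in\bigl(\bigwedge_{i\in\Omega}A_i\bigr)^{[a]}$ means $a\notin\alpha\bigl(\bigwedge_{i\in\Omega}A_i(x)\bigr)$; by the $\bigwedge$--$\bigcup$ property the set $\alpha\bigl(\bigwedge_{i\in\Omega}A_i(x)\bigr)$ equals $\bigcup_{i\in\Omega}\alpha(A_i(x))$, so the negated condition becomes $a\notin\alpha(A_i(x))$ for every $i\in\Omega$, i.e.\ $x\in\bigcap_{i\in\Omega}(A_i)^{[a]}$.

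I do not anticipate any real obstacle: each case is a short chain of equivalences. The only thing requiring care is bookkeeping --- invoking the correct half of the $\alpha/\beta$ theorem of \cite{Wang} in each case, and remembering that the cut $A^{[a]}$ is defined by the negated condition $a\notin\alpha(A(x))$, which is exactly why $(3)$ yields an intersection while $(2)$ yields a union even though both pass through the union side of that theorem. Complete distributivity of $L$ is used only implicitly, through the existence of $\alpha$ and $\beta$ and these mapping properties.
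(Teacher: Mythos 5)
Your argument is correct: all three identities follow by the pointwise equivalences you describe, with $(1)$ using only the defining property of the infimum and $(2)$, $(3)$ using exactly the $\bigvee$--$\bigcup$ property of $\beta$ and the $\bigwedge$--$\bigcup$ property of $\alpha$ recorded in Theorem 1.1, together with the negated-membership definition of $A^{[a]}$. The paper itself states this result without proof (citing \cite{Shi1}), and your element-chasing derivation is precisely the standard argument behind it, so there is nothing to add.
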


\begin{definition}[\cite{Van}]\label{qwerd}
A subset $\mathbb{C}$ of $\mathbf{2}^X$   is  called
a convexity   if it satisfies the following conditions:
\begin{description}
 \item[(C1)] $\emptyset, X \in \mathbb{C}$;
\item[(C2)]   if $\{A_{i}:i\in \Omega  \} \subseteq \mathbb{C}$   is nonempty,
then $\bigcap \limits_{i\in \Omega}A_{i}\in \mathbb{C}$;
\item[(C3)]  if  $\{A_{i}:i\in \Omega  \} \subseteq \mathbb{C}$ is nonempty and totally ordered by inclusion,  then
 $\bigcup \limits_{i\in \Omega}A_{i}\in \mathbb{C}$.
\end{description}
The pair $(X,\mathbb{C})$ is calld a  convex structure and
the elements in $\mathbb{C}$ are called  convex sets.
\end{definition}

\begin{definition}[\cite{Maruyama}]  For a nonempty set $X$ and a subset $\mathfrak{C}$  of $L^X$,  $\mathfrak{C}$   is called
an $L$-convexity  if it satisfies the following conditions:
\begin{description}
 \item[(LC1)] $\chi _{\emptyset},\chi _{X} \in \mathfrak{C}$;

\item[(LC2)]   if $\{A_{i}:i\in \Omega  \} \subseteq \mathfrak{C}$   is nonempty,
then $\bigwedge \limits_{i\in \Omega}A_{i}\in \mathfrak{C}$;

\item[(LC3)]  if  $\{A_{i}:i\in \Omega  \} \subseteq \mathfrak{C}$ is nonempty and totally ordered by inclusion,  then
 $\bigvee \limits_{i\in \Omega}A_{i}\in \mathfrak{C}$.
\end{description}
If $\mathfrak{C}$  is  an  $L$-convexity on $X$, then the pair $(X,\mathfrak{C})$ is called an $L$-convex structure. When $L=\mathbf{2}$,  an  $L$-convexity is exactly an $I$-convex structure in \cite{Rosa1,Rosa2}.
\end{definition}

%
%
%

\begin{definition}[\cite{Shi6}]
 A  mapping $\mathscr{C}: \mathbf{2}^{X} \rightarrow M$ is called an $M$-fuzzifying  convexity
 on $X$
if it satisfies the following conditions:
\begin{description}
\item[(MYC1)] $\mathscr{C}(\emptyset)=\mathscr{C}(X)=\top_{M}$;
\item[(MYC2)] if $\{A_{i}:i\in \Omega \} \subseteq \mathbf{2}^{X}$   is nonempty, then
$\mathscr{C}( \bigcap\limits_{i\in \Omega}A_{i})\geq \bigwedge\limits_{i\in \Omega}\mathscr{C}(A_{i})$;
\item[(MYC3)]  if  $\{A_{i}:i\in \Omega \} \subseteq \mathbf{2}^{X}$    is
nonempty and totally ordered by inclusion, then $\mathscr{C}( \bigcup\limits_{i\in \Omega}A_{i})\geq \bigwedge\limits_{i\in \Omega}\mathscr{C}(A_{i})$.
\end{description}
If $\mathscr{C}$  is  an  $M$-fuzzifying  convexity on $X$, then the pair $(X,\mathscr{C})$ is called an $M$-fuzzifying convex structure.
\end{definition}

\begin{theorem}[\cite{Shi6}]  \label{cdax2}   A mapping $\mathscr{C}: {\bf2}^X \rightarrow M$ is an $M$-fuzzifying
convexity if and only if for each $a\in
M\backslash\{\bot_M\}$, $\mathscr{C}_{[a]}$ is a convexity.
 \end{theorem}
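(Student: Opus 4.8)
The plan is to use the standard ``level-set'' (decomposition) technique: each fuzzifying axiom (MYC$i$) should correspond, at every level $a$, exactly to the crisp axiom (C$i$) for the cut $\mathscr{C}_{[a]}=\{A\in\mathbf{2}^X : a\leq\mathscr{C}(A)\}$ (viewing $\mathscr{C}$ as an $M$-set on $\mathbf{2}^X$), and we simply verify both implications. Throughout, the one thing to watch is that $\mathscr{C}_{[a]}$ is only ever asserted to be a convexity when $a\neq\bot_M$.

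For \emph{necessity}, I would fix $a\in M\setminus\{\bot_M\}$ and check (C1)--(C3) for $\mathscr{C}_{[a]}$. (C1) is immediate from (MYC1) since $\top_M\geq a$, so $\emptyset,X\in\mathscr{C}_{[a]}$. For (C2): if $\{A_i:i\in\Omega\}\subseteq\mathscr{C}_{[a]}$ is nonempty then $a\leq\bigwedge_{i\in\Omega}\mathscr{C}(A_i)$, which by (MYC2) is $\leq\mathscr{C}(\bigcap_{i\in\Omega}A_i)$, so $\bigcap_{i\in\Omega}A_i\in\mathscr{C}_{[a]}$; (C3) is the same computation with $\bigcup$ and (MYC3) for chains. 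Hence $\mathscr{C}_{[a]}$ is a convexity.

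For \emph{sufficiency}, I would recover (MYC1)--(MYC3) from the crisp axioms holding at every level $a\neq\bot_M$. For (MYC1): $\emptyset,X\in\mathscr{C}_{[a]}$ for all such $a$, so $\mathscr{C}(\emptyset),\mathscr{C}(X)\geq a$ for every $a\neq\bot_M$; together with the trivial bound at $\bot_M$ and $\bigvee M=\top_M$, this forces $\mathscr{C}(\emptyset)=\mathscr{C}(X)=\top_M$. For (MYC2), given a nonempty family $\{A_i:i\in\Omega\}$, I would set $a:=\bigwedge_{i\in\Omega}\mathscr{C}(A_i)$; if $a=\bot_M$ the inequality $\mathscr{C}(\bigcap_{i\in\Omega}A_i)\geq\bot_M$ is automatic, and if $a\neq\bot_M$ then every $A_i\in\mathscr{C}_{[a]}$, so $\bigcap_{i\in\Omega}A_i\in\mathscr{C}_{[a]}$ by (C2), i.e.\ $\mathscr{C}(\bigcap_{i\in\Omega}A_i)\geq a$. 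The chain case (MYC3) is identical using (C3). Thus $\mathscr{C}$ is an $M$-fuzzifying convexity.

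The argument has no genuine obstacle; it is a routine decomposition proof whose only delicate points are bookkeeping. One must handle the degenerate level $a=\bot_M$ separately in the sufficiency direction, since the hypothesis only speaks about $a\neq\bot_M$, and one uses $\bigvee M=\top_M$ to upgrade the levelwise memberships $\emptyset,X\in\mathscr{C}_{[a]}$ to the sharp equalities demanded by (MYC1).
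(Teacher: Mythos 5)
Your proof is correct and is exactly the standard cut-set argument: the paper cites this result from \cite{Shi6} without proof and declares the directly analogous Theorem \ref{23456} ``obvious,'' while its explicit proof of the $\alpha$-cut companion (Theorem \ref{mhn}) follows the same levelwise decomposition you use, just phrased via the $\alpha$-families instead of the order inequalities $a\leq\mathscr{C}(A_i)$. Your handling of the two delicate points --- the degenerate level $a=\bigwedge_{i\in\Omega}\mathscr{C}(A_i)=\bot_M$ in the sufficiency direction, and upgrading $\mathscr{C}(\emptyset),\mathscr{C}(X)\geq a$ for all $a\neq\bot_M$ to equality with $\top_M$ --- is exactly what is needed, so nothing is missing.
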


\begin{definition}[\cite{Shi6}]
 Let $\varphi: \mathbf{2}^{X}\rightarrow M$ be a mapping.  The $M$-fuzzifying  convex structure  $(X,\mathscr{C})$ generated by $\varphi$ is given by
  $$\forall A\in  \mathbf{2}^{X},\    \mathscr{C}_{\varphi}(A)=\bigwedge \{\mathscr{D}(A): \varphi\leq  \mathscr{D}\in \mathfrak{\mathfrak{G}}  \},$$ where  $\mathfrak{\mathfrak{G}}$ denotes all the $M$-fuzzifying convexities on $X$.  Then  $\varphi$ is called a subbase of   the $M$-fuzzifying convexity  $\mathscr{C}$.  Alternatively, we say that  $\varphi$ generates the convexity $ \mathscr{C}_{\varphi}$.
\end{definition}


\begin{definition}[\cite{Ajmal}] Let $L$  be a lattice and
  $A$  a fuzzy subset of $L$.  Then $A$ is called a fuzzy sublattice of $L$ if for all $x, y \in L$,\\
(i)   $A(x\wedge y)\geq A(x) \wedge A(y)$,\\
(ii)  $A(x \vee y)\geq A(x) \wedge A(y)$.

 A fuzzy sublattice $A$ is said to be fuzzy convex if for every interval $[a, b]\subseteq L$
and for all $x \in [a, b]$,   $A(x)\geq A(a)\wedge  A(b)$.
\end{definition}

\begin{definition}[\cite{Berger}]
 Let $G$ be a group. A fuzzy subset $\lambda$ of $G$ is said to be a fuzzy
subgroup if\\
(1) $\lambda(xy)\geq\lambda(x) \wedge \lambda(y)$,\\
(2) $\lambda(x^{-1}) \geq \lambda(x)$.

Let $G$ be an ordered group. A fuzzy subgroup $\lambda$ of $G$ is said to be a
fuzzy convex subgroup if  for every interval $[a, b]\subseteq G$
and for all $x \in [a, b]$, we have $\lambda(x) \geq \lambda(a)\wedge \lambda(b)$.
\end{definition}

\section{$(L,M)$-fuzzy convex structures}

In this section,   combining the concepts of $L$-convex structures and $M$-fuzzifying convex structures,  we introduce  a general approach to the fuzzification of
convex structures as follows.

\begin{definition}\rm  \label{123qwas}
A mapping $\mathcal{C}: L^X \rightarrow M$
is called an $(L,M)$-fuzzy convexity  on $X$ if it satisfies the
following three conditions:
%
%
%

\begin{description}
\item[(LMC1)] $\mathcal{C} (\chi_{\emptyset})=\mathcal{C} (\chi_{X})=\top_M$;

\item[(LMC2)]    if $\{A_{i}:i\in \Omega \} \subseteq L^X$   is nonempty, then
$\mathcal C   \left( \bigwedge\limits_{i\in \Omega}A_{i} \right)  \geq \bigwedge\limits_{i\in \Omega}\mathcal C(A_{i})$;

\item[(LMC3)]  if  $\{A_i: i\in \Omega\}\subseteq L^X$    is nonempty and totally ordered by inclusion, then  $\mathcal C \left(\bigvee\limits_{i\in \Omega}A_i\right)
\ge \bigwedge\limits_{i\in \Omega} \mathcal C (A_i)$.
\end{description}

%

If $\mathcal C$ is an  $(L,M)$-fuzzy convexity,  then  $(X,\mathcal C)$
is called an $(L,M)$-fuzzy convex structure.

An
$(L,\mathbf{2})$-fuzzy convex structure is  an
$L$-convex structure.   An
$(I,\mathbf{2})$-fuzzy convex structure  can be viewed as an $I$- convex structure.
  A $(\mathbf{2},M)$-fuzzy convex structure is
 an $M$-fuzzifying convex structure.   A crisp convex structure in \cite{Van}
can be regarded as a $(\mathbf{2},\mathbf{2})$-fuzzy convex structure.

If  $\mathcal C$  is an $(L,M)$-fuzzy convexity, then $\mathcal C(A)$
can be regarded as the degree to which $A$ is an $L$-convex set.
\end{definition}

%
%
%
%
%
%
%

Next we give some examples  of  $(L,M)$-fuzzy convex structures, $L$-convex structures and $M$-fuzzifying  convex structures, respectively.

\begin{example}
Let a mapping $\mathcal{T}: L^{X}\rightarrow M$ be an $(L,M)$-fuzzy topology in  \cite{Kubiak, Sostak}.  If it satisfies the following conditions:
\begin{description}
 \item[(S)]  $\forall \{A_{j}\}_{j\in J}\subseteq L^{X}$, $\mathcal{T}(\bigwedge_{j\in J}A_{j}) \geq \bigwedge _{j\in J}\mathcal{T}(A_{j})$, \end{description}
  then $\mathcal{T}$ is called a saturated $(L,M)$-fuzzy topology, and $(X,\mathcal{T})$ is called an Alexandroff $(L,M)$-fuzzy  topological space.

 We can see that an  Alexandroff  $(L,M)$-fuzzy topological space  $(X,\mathcal{T})$  is an $(L,M)$-fuzzy convex structure.

 When $L=\mathbf{2}$ and $M=I$,   an  Alexandroff  $(L,M)$-fuzzy topological space  $(X,\mathcal{T})$ is an   Alexandroff  fuzzifying topological space in \cite{Fang2,Ying1} and  it is an example of  $M$-fuzzifying  convex structures.
  \end{example}

%

\begin{example}[\cite{Fang1}]  \label{asdf} An $I$-fuzzified set of all upper sets of a fuzzy preordered set $(X,R)$ is a map $\nabla(R): I^{X} \rightarrow I$ defined by
$$\forall U\in I^{X},\  \nabla(R)(U)=\bigwedge\limits_{(x, y)\in X \times X}R(x, y)\rightarrow(U(x)\rightarrow U(x)).$$
For a given fuzzy preorder $R$ on $X$, $\nabla(R)$, the $I$-fuzzified set of all upper sets of $(X,R)$ has the following properties: for all $\mathcal{F}\subseteq I^{X}$, $U,V \in I^{X}$ and $\lambda\in [0,1]$, \\
(i)  $\nabla(R)(\lambda) = 1$  for every constant mapping $\lambda$ from $X$ to $[0,1]$;\\
(ii) $\bigwedge \nabla(R)(\mathcal{F})\leq \nabla(R)( \bigwedge\mathcal{F})$
 where  $\nabla(R)(\mathcal{F})=\{\nabla(R)(U)| U \in \mathcal{F}\}$;\\
(iii)
 $\bigwedge \nabla(R)(\mathcal{F})\leq \nabla(R)(\bigvee \mathcal{F})$. \\
 We can see that  $\nabla(R)$  satisfies (LMC1)-(LMC3) and then
$(X, \nabla(R))$ is an $(L,M)$-fuzzy convex structure,  where $M=L=I$.
\end{example}

\begin{example}
Define a mapping $\mathcal{C}: L^{\mathbb{R}^{n}}\rightarrow M$ by
 $$\forall A\in L^{\mathbb{R}^{n}},\
 \mathcal{C}(A)=\bigwedge\limits_{\lambda \in [0,1]} \bigwedge\limits_{(x, y)\in \mathbb{R}^{n} \times \mathbb{R}^{n}}(A(x)\wedge A(y))\rightarrow A(\lambda x + (1-\lambda)y),$$
where the binary function $\rightarrow$ is defined  as follows: for $a,b,c\in L$,
$a\rightarrow b=\bigvee\{c\in L: a\wedge c\leq b \}$.
 Then $(X, \mathcal{C})$ is an $(L,M)$-fuzzy convex structure.  Next we show that $\mathcal{C}$ satisfies (LMC1)-(LMC3).

(LMC1) Clearly,  $\mathcal{C}(\chi_\emptyset)=\mathcal{C}(\chi_ {\mathbb{R}^{n}})=\top_{M}$.

(LMC2) For any nonempty set $\{A_{i}:i\in \Omega \} \subseteq L^{\mathbb{R}^{n}}$, we have
$$\begin{array}{lll}
&&\mathcal{C}(\bigwedge\limits_{i\in \Omega}A_{i})\\
&=&\bigwedge\limits_{\lambda \in [0,1]} \bigwedge\limits_{(x, y)\in \mathbb{R}^{n} \times \mathbb{R}^{n}}((\bigwedge\limits_{i\in \Omega}A_{i})(x)\wedge (\bigwedge\limits_{i\in \Omega}A_{i})(y))\rightarrow (\bigwedge\limits_{i\in \Omega}A_{i})(\lambda x + (1-\lambda)y)\\
&=&\bigwedge\limits_{\lambda \in [0,1]} \bigwedge\limits_{(x, y)\in \mathbb{R}^{n} \times \mathbb{R}^{n}}(\bigwedge\limits_{i\in \Omega}A_{i}(x)\wedge \bigwedge\limits_{i\in \Omega}A_{i}(y))\rightarrow \bigwedge\limits_{i\in \Omega}A_{i}(\lambda x + (1-\lambda)y)\\
&=&\bigwedge\limits_{\lambda \in [0,1]} \bigwedge\limits_{(x, y)\in \mathbb{R}^{n} \times \mathbb{R}^{n}}\bigwedge\limits_{i\in \Omega}((\bigwedge\limits_{i\in \Omega}A_{i}(x)\wedge \bigwedge\limits_{i\in \Omega}A_{i}(y))\rightarrow A_{i}(\lambda x + (1-\lambda)y))\\
&\geq &\bigwedge\limits_{\lambda \in [0,1]} \bigwedge\limits_{(x, y)\in \mathbb{R}^{n} \times \mathbb{R}^{n}}\bigwedge\limits_{i\in \Omega}(A_{i}(x)\wedge A_{i}(y))\rightarrow A_{i}(\lambda x + (1-\lambda)y)\\
&= &\bigwedge\limits_{i\in \Omega} \bigwedge\limits_{\lambda \in [0,1]} \bigwedge\limits_{(x, y)\in \mathbb{R}^{n} \times \mathbb{R}^{n}}((A_{i}(x)\wedge A_{i}(y))\rightarrow A_{i}(\lambda x + (1-\lambda)y))\\
&= &\bigwedge\limits_{i\in \Omega}\mathcal{C}(A_{i}).
\end{array}$$
The proof of (LMC3) is similar to that of (LMC2) and is omitted.
\end{example}


When $M=\mathbf{2}$, we obtain the following example.

\begin{example}[\cite{Maruyama}]   An $L$-fuzzy set $\mu$ on $\mathbb{R}^{n}$
is an $L$-fuzzy convex set on $\mathbb{R}^{n}$
iff
$\mu(rx+(1-r)y)\geq\mu(x)\wedge\mu(y)$
for any $x,$ $y\in \mathbb{R}^{n}$
and for any $r\in[0,1]$. $\mathfrak{C}_{L}$
denotes the set of all $L$-fuzzy convex sets on $\mathbb{R}^{n}$. Then $(\mathbb{R}^{n},\mathfrak{C}_{L})$
is an  $L$-convex structure.
\end{example}
%

\begin{example}
Let  $\mathfrak{C}$ denote the set of all fuzzy  convex sublattices  on $L$.  It is easy to show that $\mathfrak{C}$ is an $I$-convexity and $(L, \mathfrak{C})$ is an  $I$-convex structure.
\end{example}
\begin{example}
Let G be an ordered group,  and  let  $\mathfrak{C}$ denote the set of all fuzzy convex subgroup  on $G$.  Then we can see that $\mathfrak{C}$ is an  $I$-convexity and $(G, \mathfrak{C})$ is an  $I$-convex structure.
\end{example}

%

%

The next two theorems  give characterizations of an $(L,M)$-fuzzy
convexity.

\begin{theorem}\label{23456}   A mapping $\mathcal{C} : L^X\rightarrow M$ is an $(L,M)$-fuzzy
convexity  if and only if for each $a\in
M\backslash\{\bot_M\}$, $\mathcal{C}_{[a]}$ is an $L$-convexity .
\end{theorem}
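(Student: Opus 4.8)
The plan is to prove both directions by transferring the three convexity axioms between the ``level'' picture and the ``fuzzy'' picture via the cut operator $(-)_{[a]}$ and the decomposition results of Theorem~\ref{11a} and Theorem~\ref{12w}.

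\textbf{Necessity.} Assume $\mathcal{C}$ is an $(L,M)$-fuzzy convexity and fix $a\in M\setminus\{\bot_M\}$; I want to check (LC1)--(LC3) for $\mathcal{C}_{[a]}=\{A\in L^X : a\le\mathcal C(A)\}$. Axiom (LC1) is immediate from (LMC1), since $\mathcal C(\chi_\emptyset)=\mathcal C(\chi_X)=\top_M\ge a$. For (LC2), take a nonempty family $\{A_i:i\in\Omega\}\subseteq\mathcal{C}_{[a]}$, so $a\le\mathcal C(A_i)$ for every $i$; then $a\le\bigwedge_{i\in\Omega}\mathcal C(A_i)\le\mathcal C\!\left(\bigwedge_{i\in\Omega}A_i\right)$ by (LMC2), hence $\bigwedge_{i\in\Omega}A_i\in\mathcal{C}_{[a]}$. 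Axiom (LC3) is handled the same way using (LMC3) in place of (LMC2) on a totally ordered family. No $a\ne\bot_M$ subtlety enters here beyond making $\mathcal C_{[a]}$ a genuine collection of convex sets.

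\textbf{Sufficiency.} Assume $\mathcal{C}_{[a]}$ is an $L$-convexity for every $a\in M\setminus\{\bot_M\}$; I verify (LMC1)--(LMC3). For (LMC1): $\chi_\emptyset,\chi_X\in\mathcal C_{[a]}$ for all such $a$ by (LC1), so $\mathcal C(\chi_\emptyset)\ge a$ and $\mathcal C(\chi_X)\ge a$ for every $a\ne\bot_M$; taking the supremum of all such $a$ gives $\mathcal C(\chi_\emptyset)=\mathcal C(\chi_X)=\top_M$ (using that $\top_M=\bigvee(M\setminus\{\bot_M\})$ when $\top_M\ne\bot_M$, the case $\top_M=\bot_M$ being trivial). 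For (LMC2), let $\{A_i:i\in\Omega\}\subseteq L^X$ be nonempty and set $a_0=\bigwedge_{i\in\Omega}\mathcal C(A_i)$; I must show $a_0\le\mathcal C\!\left(\bigwedge_{i\in\Omega}A_i\right)$. The clean way is to show $b\le\mathcal C\!\left(\bigwedge_{i\in\Omega}A_i\right)$ for every $b\in\beta(a_0)$ and then use $a_0=\bigvee\beta(a_0)$. Fix $b\in\beta(a_0)\subseteq M\setminus\{\bot_M\}$; since $b\prec a_0=\bigwedge_i\mathcal C(A_i)$ and in a completely distributive lattice $b\prec a_0$ implies $b\le\mathcal C(A_i)$ for each $i$ (because $b\le a_0\le\mathcal C(A_i)$), we get $A_i\in\mathcal C_{[b]}$ for all $i$; by (LC2) for the $L$-convexity $\mathcal C_{[b]}$, $\bigwedge_{i\in\Omega}A_i\in\mathcal C_{[b]}$, i.e.\ $b\le\mathcal C\!\left(\bigwedge_{i\in\Omega}A_i\right)$. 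Taking the join over $b\in\beta(a_0)$ finishes (LMC2). Axiom (LMC3) is identical, replacing (LC2) by (LC3) and observing that total ordering by inclusion of $\{A_i\}$ is a condition on the $L$-sets themselves, so it passes unchanged into each $\mathcal C_{[b]}$.

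\textbf{Main obstacle.} The only genuinely nonroutine point is the passage ``$b\le\mathcal C(A)$ for all $b\ne\bot_M$ below $\alpha$ given value $\Rightarrow$ the value equals that supremum,'' and more precisely knowing that $\bigwedge_i\mathcal C(A_i)$ can be recovered as the join of its $\beta$-set together with the implication $b\prec a_0\Rightarrow b\le a_0$; both are exactly the completely-distributive-lattice facts recorded before Theorem~\ref{11a} ($b=\bigvee\beta(b)$), so once those are invoked the argument is forced. A minor bookkeeping point is the degenerate lattice $M=\{\bot_M\}=\{\top_M\}$, where $M\setminus\{\bot_M\}=\emptyset$ and the statement holds vacuously on one side and trivially on the other; I would dispatch this in one line at the start of the sufficiency direction.
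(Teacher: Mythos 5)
Your proof is correct. The paper itself dismisses this theorem with ``the proof is obvious and is omitted,'' so there is no argument to compare against; your necessity direction is the natural one and is fine as written. Two small remarks on your sufficiency direction. First, the detour through $\beta(a_0)$ is unnecessary: if $a_0=\bigwedge_{i\in\Omega}\mathcal C(A_i)=\bot_M$ then (LMC2) is trivial, and otherwise $a_0\in M\setminus\{\bot_M\}$ and each $A_i$ lies in $\mathcal C_{[a_0]}$ directly, so (LC2) for the single $L$-convexity $\mathcal C_{[a_0]}$ already gives $a_0\le\mathcal C\bigl(\bigwedge_{i\in\Omega}A_i\bigr)$; in particular the argument needs only that $M$ is a complete lattice, not complete distributivity. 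Second, your parenthetical claim that $\beta(a_0)\subseteq M\setminus\{\bot_M\}$ is not correct in general: from the definition of $\prec$ one checks that $\bot_M\prec a_0$ whenever $a_0\neq\bot_M$, so $\bot_M\in\beta(a_0)$. This is harmless --- for $b=\bot_M$ the inequality $b\le\mathcal C\bigl(\bigwedge_{i\in\Omega}A_i\bigr)$ holds trivially and the hypothesis on $\mathcal C_{[b]}$ is never invoked --- but as stated the justification for applying (LC2) to $\mathcal C_{[b]}$ for \emph{every} $b\in\beta(a_0)$ has a gap that you should patch by treating $b=\bot_M$ separately (or by dropping $\beta$ altogether as above). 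The same comments apply verbatim to (LMC3).
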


\begin{proof} The proof is obvious and is omitted.
\end{proof}

\begin{theorem}\label{mhn} 
A mapping
$\mathcal{C}: L^X\rightarrow M$ is an $(L,M)$-fuzzy convexity if
and only if for each $a\in \alpha(\bot_M)$, $\mathcal{C}^{[a]}$ is an
$L$-convexity.
\end{theorem}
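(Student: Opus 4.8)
The plan is to run the argument used for Theorem~\ref{23456}, but with the upper cut operator $\mathcal{C}^{[a]}$ in place of $\mathcal{C}_{[a]}$ and with the order‑reversing map $\alpha$ playing the role that $\le$ plays in the lower‑cut version. I would first record three purely lattice‑theoretic facts about $M$: (i) $\alpha$ is antitone, and since $b=\bigwedge\alpha(b)$ for every $b\in M$ (see \cite{Wang}) together with $\alpha(\bigwedge_{i}b_i)=\bigcup_{i}\alpha(b_i)$, one gets the equivalence $b\ge c\iff\alpha(b)\subseteq\alpha(c)$; (ii) $\alpha(\top_M)=\alpha(\bigwedge\emptyset)=\bigcup\emptyset=\emptyset$; (iii) $\alpha(b)\subseteq\alpha(\bot_M)$ for every $b\in M$, because $\bot_M\le b$. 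With these in hand both implications reduce to bookkeeping on the families $\alpha(\mathcal{C}(A))\subseteq M$.

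\textit{Necessity.} Suppose $\mathcal{C}$ is an $(L,M)$-fuzzy convexity and fix $a\in\alpha(\bot_M)$; I claim $\mathcal{C}^{[a]}$ satisfies (LC1)--(LC3). By (LMC1), $\mathcal{C}(\chi_\emptyset)=\mathcal{C}(\chi_X)=\top_M$, and $a\notin\emptyset=\alpha(\top_M)$, so $\chi_\emptyset,\chi_X\in\mathcal{C}^{[a]}$. If $\{A_i\}_{i\in\Omega}\subseteq\mathcal{C}^{[a]}$ is nonempty, then by (LMC2) and (i), $\alpha(\mathcal{C}(\bigwedge_i A_i))\subseteq\alpha(\bigwedge_i\mathcal{C}(A_i))=\bigcup_i\alpha(\mathcal{C}(A_i))$; since $a\notin\alpha(\mathcal{C}(A_i))$ for every $i$, we get $a\notin\alpha(\mathcal{C}(\bigwedge_i A_i))$, i.e. $\bigwedge_i A_i\in\mathcal{C}^{[a]}$, which is (LC2). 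Replacing (LMC2) by (LMC3) gives (LC3) verbatim for chains, so $\mathcal{C}^{[a]}$ is an $L$-convexity.

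\textit{Sufficiency.} Suppose $\mathcal{C}^{[a]}$ is an $L$-convexity for every $a\in\alpha(\bot_M)$. For (LMC1): each $a\in\alpha(\bot_M)$ has $\chi_\emptyset\in\mathcal{C}^{[a]}$, i.e. $a\notin\alpha(\mathcal{C}(\chi_\emptyset))$; combined with (iii) this gives $\alpha(\mathcal{C}(\chi_\emptyset))=\emptyset$, hence $\mathcal{C}(\chi_\emptyset)=\bigwedge\emptyset=\top_M$, and likewise $\mathcal{C}(\chi_X)=\top_M$. For (LMC2): given a nonempty $\{A_i\}_{i\in\Omega}\subseteq L^X$, by (i) it is enough to show $\alpha(\mathcal{C}(\bigwedge_i A_i))\subseteq\bigcup_i\alpha(\mathcal{C}(A_i))=\alpha(\bigwedge_i\mathcal{C}(A_i))$. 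Take $a\in\alpha(\mathcal{C}(\bigwedge_i A_i))$; by (iii), $a\in\alpha(\bot_M)$, so $\mathcal{C}^{[a]}$ is an $L$-convexity, and $a\in\alpha(\mathcal{C}(\bigwedge_i A_i))$ says exactly $\bigwedge_i A_i\notin\mathcal{C}^{[a]}$; the contrapositive of (LC2) yields some $i_0$ with $A_{i_0}\notin\mathcal{C}^{[a]}$, i.e. $a\in\alpha(\mathcal{C}(A_{i_0}))\subseteq\bigcup_i\alpha(\mathcal{C}(A_i))$. Thus $\mathcal{C}(\bigwedge_i A_i)\ge\bigwedge_i\mathcal{C}(A_i)$. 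The same argument with the contrapositive of (LC3) on chains yields (LMC3).

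The one step that needs care, and the only place where the hypothesis $a\in\alpha(\bot_M)$ is actually used, is fact (iii): $\alpha(\mathcal{C}(A))\subseteq\alpha(\bot_M)$. It is what guarantees that testing $A\in\mathcal{C}^{[a]}$ over all $a\in\alpha(\bot_M)$ recovers the full set $\alpha(\mathcal{C}(A))$, and hence lets one control $\mathcal{C}(A)=\bigwedge\alpha(\mathcal{C}(A))$; without restricting $a$ to $\alpha(\bot_M)$ the upper cuts $\mathcal{C}^{[a]}$ would be vacuous. Everything else parallels the proof of Theorem~\ref{23456}.
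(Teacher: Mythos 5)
Your proof is correct and follows essentially the same route as the paper's: both directions reduce to tracking the sets $\alpha(\mathcal{C}(A))$ via the $\bigwedge$--$\bigcup$ property of $\alpha$ and the identity $b=\bigwedge\alpha(b)$, with your sufficiency argument being just the contrapositive form of the paper's. Your explicit use of fact (iii) to justify $\alpha(\mathcal{C}(\chi_\emptyset))=\emptyset$ in (LMC1) actually fills in a small detail the paper leaves implicit.
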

\begin{proof}
Sufficiency.
{\bf(LMC1)}  For each $a\in \alpha(\bot_M)$,  $\chi_\emptyset, \chi_X\in   \mathcal{C}^{[a]}$. We have   $\mathcal{C}(\chi_\emptyset)=\mathcal{C}(\chi_X)=\top_M$.

{\bf(LMC2)} Let  $\{A_{i}|i\in \Omega \} \subseteq L^{X}$  be nonempty,
and for $a\in \alpha(\bot_M)$,
$a \notin \alpha(\bigwedge \limits_{i\in \Omega}\mathcal{C}(A_{i}))$.
Thus $a \notin \bigcup\limits_{i\in \Omega}\alpha(\mathcal{C}(A_{i}))$. We know that $a \notin \alpha(\mathcal{C}(A_{i}))$ and then $A_{i}\in  \mathcal{C}^{[a]}$ for each $i\in \Omega$.  Since for each $a\in \alpha(\bot_M)$,
$\mathcal{C}^{[a]}$ is an $L$-convexity,  $\bigwedge\limits_{i\in \Omega}A_{i}\in \mathcal{C}^{[a]}$, that is, $a\notin  \alpha(\mathcal{C}(\bigwedge\limits_{i\in \Omega}A_{i}))$. Therefore $\mathcal{C}(\bigwedge\limits_{i\in \Omega}A_{i})\geq \bigwedge \limits_{i\in \Omega}\mathcal{C}(A_{i})$.

{\bf(LMC3)} Let $\{A_{i}|i\in \Omega \} \subseteq L^{X}$ be nonempty and totally ordered by inclusion, and let $a \notin \alpha(\bigwedge\limits_{i\in \Omega}\mathcal{C}(A_{i}))$ for $a\in \alpha(\bot_M)$.
Thus $a \notin \bigcup\limits_{i\in \Omega}\alpha(\mathcal{C}(A_{i}))$. We know that $a \notin \alpha(\mathcal{C}(A_{i}))$ and then $A_{i}\in  \mathcal{C}^{[a]}$ for each $i\in \Omega$.  Since for each $a\in \alpha(\bot_M)$,
$\mathcal{C}^{[a]}$ is an $L$-convexity,  $\bigvee\limits_{i\in \Omega}A_{i}\in \mathcal{C}^{[a]}$, that is, $a\notin  \alpha(\mathcal{C}(\bigvee\limits_{i\in \Omega}A_{i}))$. Therefore $\mathcal{C}(\bigvee\limits_{i\in \Omega}A_{i})\geq \bigwedge \limits_{i\in \Omega}\mathcal{C}(A_{i})$.

 Necessity. Suppose that $\mathcal{C} : L^X\rightarrow M$ is an
$(L,M)$-fuzzy convexity and $a\in \alpha(\bot_M)$. Now we
prove that $\mathcal{C}^{[a]}$ is an $L$-convexity.

{\bf(LC1)} By $\mathcal{C}(\chi_\emptyset)=\mathcal{C}(\chi_X)=\top_M$ and
$\alpha(\top_M)=\emptyset$, we know that $a\not\in
\alpha(\mathcal{C}(\chi_\emptyset))$ and $a\not\in
\alpha(\mathcal{C}(\chi_X))$. This implies $\chi_\emptyset, \chi_X \in
\mathcal{C}^{[a]}$.

{\bf(LC2)} If $\{A_i:i\in \Omega\}\subseteq \mathcal{C}^{[a]}$, then
$\forall i\in \Omega$, $a\not\in \alpha(\mathcal{C}(A_i))$. Hence
$a\not\in \bigcup\limits_{i\in \Omega}\alpha(\mathcal{C}(A_i))$. By
$\mathcal{C}(\bigwedge\limits_{i\in \Omega}A_i)\ge \bigwedge\limits_{i\in
\Omega}\mathcal{C}(A_i)$,  we know that
$\alpha\left(\mathcal{C}\left(\bigwedge\limits_{i\in \Omega}A_i\right)\right) \subseteq
\alpha\left(\bigwedge\limits_{i\in
\Omega}\mathcal{C}(A_i)\right)=\bigcup\limits_{i\in
\Omega}\alpha(\mathcal{C}(A_i))$.  This shows $a\not\in
\alpha\left(\mathcal{C}\left(\bigwedge\limits_{i\in
\Omega}A_i\right)\right)$. Therefore $\bigwedge\limits_{i\in
\Omega}A_i\in \mathcal{C}^{[a]}$.

{\bf(LC3)} If $\{A_i:i\in \Omega\}\subseteq \mathcal{C}^{[a]}$  is nonempty and totally ordered by inclusion,  then
$\forall i\in \Omega$, $a\not\in \alpha(\mathcal{C}(A_i))$. Hence
$a\not\in \bigcup\limits_{i\in \Omega}\alpha(\mathcal{C}(A_i))$. By
$\mathcal{C}(\bigvee\limits_{i\in \Omega}A_i)\ge \bigwedge\limits_{i\in
\Omega}\mathcal{C}(A_i)$, we know that
$\alpha\left(\mathcal{C}\left(\bigvee\limits_{i\in \Omega}A_i\right)\right) \subseteq
\alpha\left(\bigwedge\limits_{i\in
\Omega}\mathcal{C}(A_i)\right)=\bigcup\limits_{i\in
\Omega}\alpha(\mathcal{C}(A_i))$.  This shows $a\not\in
\alpha\left(\mathcal{C}\left(\bigvee\limits_{i\in
\Omega}A_i\right)\right)$. Therefore $\bigvee\limits_{i\in
\Omega}A_i\in \mathcal{C}^{[a]}$. The proof is completed. \end{proof}


Now we consider the conditions that a family of
$L$-convexities forms an $(L,M)$-fuzzy convexity. By
Theorem $\ref{11a}$,  we can obtain the following result.

\begin{corollary}
If $\mathcal{C}$ is an
$(L,M)$-fuzzy convexity,  then
\begin{enumerate}
\item[$(1)$] $\mathcal{C} _{[b]}\subseteq \mathcal{C} _{[a]}$ for any $a,b\in
M\backslash\{\bot_M\}$ with $a\in \beta(b)$.

\item[$(2)$] $\mathcal{C} ^{[b]}\subseteq \mathcal{C} ^{[a]}$ for any $a,b\in
\alpha(\bot_M)$ with $b\in \alpha(a)$.\end{enumerate}
\end{corollary}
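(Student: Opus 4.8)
The plan is to read off both inclusions directly from the decomposition identities of Theorem~\ref{11a}, applied not to an $L$-fuzzy set on $X$ but to the map $\mathcal{C}$ itself, regarded as an $M$-fuzzy set on the ground set $L^{X}$. Those identities hold over an arbitrary completely distributive lattice, so they apply verbatim with $M$ playing the role of $L$ and $L^{X}$ playing the role of $X$. The only auxiliary input needed is that the cut sets in question really are $L$-convexities, which is exactly what Theorems~\ref{23456} and~\ref{mhn} say; this is the reason the hypotheses restrict to $a,b\in M\backslash\{\bot_M\}$ in (1) and to $a,b\in\alpha(\bot_M)$ in (2).

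For (1), let $a,b\in M\backslash\{\bot_M\}$ with $a\in\beta(b)$. Applying Theorem~\ref{11a}(2) to $\mathcal{C}$ at the cut level $b$ yields
$$\mathcal{C}_{[b]}=\bigcap_{c\in\beta(b)}\mathcal{C}_{[c]}.$$
Since $a\in\beta(b)$, the set $\mathcal{C}_{[a]}$ occurs among the members of this intersection, whence $\mathcal{C}_{[b]}\subseteq\mathcal{C}_{[a]}$. (This part also follows even more elementarily: $a\in\beta(b)$ means $a\prec b$, which forces $a\le b$, so $b\le\mathcal{C}(A)$ implies $a\le\mathcal{C}(A)$; but I will keep the argument uniform with part (2).)

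For (2), let $a,b\in\alpha(\bot_M)$ with $b\in\alpha(a)$. Applying Theorem~\ref{11a}(3) to $\mathcal{C}$ at the cut level $b$ yields
$$\mathcal{C}^{[b]}=\bigcap_{b\in\alpha(c)}\mathcal{C}^{[c]}.$$
The hypothesis $b\in\alpha(a)$ says precisely that $c=a$ lies in the index set of this intersection, so $\mathcal{C}^{[a]}$ is one of its members and therefore $\mathcal{C}^{[b]}\subseteq\mathcal{C}^{[a]}$.

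I do not expect a genuine obstacle: the corollary is a one-line consequence of the cut-set decomposition formulas, and the convexity axioms (LMC1)--(LMC3) enter only indirectly, through Theorems~\ref{23456} and~\ref{mhn}, to guarantee that the cut sets are $L$-convexities. The one point requiring a moment's care is the bookkeeping of the quantified variables in Theorem~\ref{11a}(3): the identity must be invoked at the cut level $b$ rather than $a$, so that the given relation $b\in\alpha(a)$ places $a$ in the index set and makes the inclusion run in the direction claimed.
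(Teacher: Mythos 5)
Your proposal is correct and follows essentially the same route as the paper, which likewise derives both inclusions by applying the cut-set decomposition identities of Theorem~\ref{11a}(2),(3) to $\mathcal{C}$ viewed as an $M$-fuzzy set on $L^{X}$ (the paper states the corollary with exactly that citation and no further detail). Your bookkeeping in part (2) — invoking the identity at level $b$ so that the hypothesis $b\in\alpha(a)$ places $a$ in the index set — is precisely the point that needs care, and you handle it correctly.
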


\begin{theorem}
Let
$\left\{\mathcal{C}^a:\  a\in \alpha(\bot_M)\right\}$ be a family of
$L$-convexities. If $\mathcal{C} ^a=\bigcap\{\mathcal{C} ^b:a\in
\alpha(b)\}$ for all $a\in \alpha(\bot_M)$, then there exists an
$(L,M)$-fuzzy convexity  $\mathcal{C}$ such that
$\mathcal{C}^{[a]}=\mathcal{C}^a$.
\end{theorem}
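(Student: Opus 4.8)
The plan is to construct the $(L,M)$-fuzzy convexity $\mathcal{C}$ explicitly from the given family and then verify the identity $\mathcal{C}^{[a]}=\mathcal{C}^a$ entirely through the cut-set calculus for $\alpha$, invoking Theorem~\ref{mhn} at the very end.

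First I would set, for each $A\in L^X$,
$$\mathcal{C}(A)=\bigwedge\{a\in\alpha(\bot_M):A\notin\mathcal{C}^a\},$$
using the paper's convention $\bigwedge\emptyset=\top_M$, and abbreviate $S(A)=\{a\in\alpha(\bot_M):A\notin\mathcal{C}^a\}$, so that $\mathcal{C}(A)=\bigwedge S(A)$. Since $\alpha$ is a $\bigwedge$-$\bigcup$ mapping, $\alpha(\mathcal{C}(A))=\alpha\left(\bigwedge S(A)\right)=\bigcup_{b\in S(A)}\alpha(b)$, so the whole argument reduces to the set equality $\bigcup_{b\in S(A)}\alpha(b)=S(A)$.

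For that equality, the inclusion $\bigcup_{b\in S(A)}\alpha(b)\subseteq S(A)$ is where the compatibility hypothesis is used essentially: if $c\in\alpha(b)$ with $b\in S(A)$, then $c\in\alpha(b)\subseteq\alpha(\bot_M)$ since $\bot_M\le b$ forces $\alpha(b)\subseteq\alpha(\bot_M)$ (as $\alpha$ reverses order), and applying $\mathcal{C}^c=\bigcap\{\mathcal{C}^d:c\in\alpha(d)\}$ with the admissible index $d=b$ gives $\mathcal{C}^c\subseteq\mathcal{C}^b$, so $A\notin\mathcal{C}^b$ yields $A\notin\mathcal{C}^c$, i.e. $c\in S(A)$. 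Conversely, if $a\in S(A)$, then $A\notin\mathcal{C}^a=\bigcap\{\mathcal{C}^b:a\in\alpha(b)\}$, so some index $b$ (necessarily in $\alpha(\bot_M)$, since only such $\mathcal{C}^b$ occur in the family) satisfies $a\in\alpha(b)$ and $A\notin\mathcal{C}^b$; hence $b\in S(A)$ and $a\in\alpha(b)\subseteq\bigcup_{b'\in S(A)}\alpha(b')$. Thus $\alpha(\mathcal{C}(A))=S(A)$, and therefore for every $a\in\alpha(\bot_M)$ one has $A\in\mathcal{C}^{[a]}\iff a\notin\alpha(\mathcal{C}(A))\iff a\notin S(A)\iff A\in\mathcal{C}^a$; that is, $\mathcal{C}^{[a]}=\mathcal{C}^a$.

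Finally, since each $\mathcal{C}^a$ with $a\in\alpha(\bot_M)$ is an $L$-convexity by hypothesis and $\mathcal{C}^{[a]}=\mathcal{C}^a$, Theorem~\ref{mhn} immediately gives that $\mathcal{C}$ is an $(L,M)$-fuzzy convexity, completing the proof. The only delicate point is the set identity $\bigcup_{b\in S(A)}\alpha(b)=S(A)$: this is precisely the statement that the compatibility condition makes the ``bad index set'' $S(A)$ closed under passing to $\alpha$, and one must keep all indices inside $\alpha(\bot_M)$ so that the corresponding members of the family are defined; everything else is routine.
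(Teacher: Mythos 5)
Your proof is correct, and it supplies exactly the argument the paper omits (the paper only says the proof ``is straightforward''): the value $\mathcal{C}(A)=\bigwedge S(A)$ is in fact the only possible choice, since $\mathcal{C}^{[a]}=\mathcal{C}^a$ forces $\alpha(\mathcal{C}(A))=S(A)$ and hence $\mathcal{C}(A)=\bigwedge\alpha(\mathcal{C}(A))=\bigwedge S(A)$. The set identity $\bigcup_{b\in S(A)}\alpha(b)=S(A)$ is verified correctly (note only that the compatibility hypothesis is used in \emph{both} inclusions, one direction of the equality $\mathcal{C}^a=\bigcap\{\mathcal{C}^b:a\in\alpha(b)\}$ each), and the appeal to Theorem~\ref{mhn} to conclude that $\mathcal{C}$ is an $(L,M)$-fuzzy convexity is exactly right.
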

\begin{proof}The proof  is straightforward  and is omitted.
\end{proof}

\begin{theorem} 
Let
$\left\{\mathcal{C}_a:\ a\in M\backslash\{\bot_M\}\right\}$ be a family
of $L$-convexities.   If $\mathcal{C} _a=\bigcap\{\mathcal{C} _b:b\in
\beta(a)\}$ for all $a\in M\backslash\{\bot_M\}$, then there exists
an $(L,M)$-fuzzy convexity  $\mathcal{C}$ such that $\mathcal{C}_{[a]}=
\mathcal{C}_a$.
\end{theorem}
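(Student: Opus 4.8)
The plan is to recover $\mathcal{C}$ from the prescribed cuts in the obvious way and then invoke Theorem~\ref{23456}. Explicitly, I would set
$$\mathcal{C}(A)=\bigvee\{a\in M\backslash\{\bot_M\}:\ A\in\mathcal{C}_a\}\qquad(A\in L^X),$$
with the convention $\bigvee\emptyset=\bot_M$. The whole argument then reduces to establishing the single identity $\mathcal{C}_{[a]}=\mathcal{C}_a$ for every $a\in M\backslash\{\bot_M\}$: once that is in place, each cut $\mathcal{C}_{[a]}$ equals the $L$-convexity $\mathcal{C}_a$, and Theorem~\ref{23456} immediately gives that $\mathcal{C}$ is an $(L,M)$-fuzzy convexity with the required cuts.

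Before proving the identity I would record an order-reversal property of the given family: if $b\le c$ with $b,c\in M\backslash\{\bot_M\}$, then $\beta(b)\subseteq\beta(c)$ (any $d\prec b$ also satisfies $d\prec c$, since $b\le c$), and hence, by the defining equation, $\mathcal{C}_c=\bigcap\{\mathcal{C}_e:e\in\beta(c)\}\subseteq\bigcap\{\mathcal{C}_e:e\in\beta(b)\}=\mathcal{C}_b$. For the identity itself, the inclusion $\mathcal{C}_a\subseteq\mathcal{C}_{[a]}$ is immediate, since $A\in\mathcal{C}_a$ puts $a$ into the index set defining $\mathcal{C}(A)$, so $a\le\mathcal{C}(A)$. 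For the reverse inclusion I would take $A\in\mathcal{C}_{[a]}$, i.e. $a\le\mathcal{C}(A)=\bigvee\{c\in M\backslash\{\bot_M\}:A\in\mathcal{C}_c\}$, and use $a=\bigvee\beta(a)$ (valid since $a\ne\bot_M$, so $\beta(a)\ne\emptyset$): for each nonzero $b\in\beta(a)$, the relation $b\prec a\le\bigvee\{c:A\in\mathcal{C}_c\}$ yields some $c$ with $A\in\mathcal{C}_c$ and $b\le c$, whence $A\in\mathcal{C}_c\subseteq\mathcal{C}_b$ by the order-reversal step above. Since $b$ was an arbitrary nonzero element of $\beta(a)$, it follows that $A\in\bigcap\{\mathcal{C}_b:b\in\beta(a)\}=\mathcal{C}_a$, so $\mathcal{C}_{[a]}\subseteq\mathcal{C}_a$.

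The main (and essentially only) obstacle is this reverse inclusion $\mathcal{C}_{[a]}\subseteq\mathcal{C}_a$: one must convert the inequality ``$a$ lies below a supremum'' into membership of $A$ in each $\mathcal{C}_b$ with $b\prec a$, and it is exactly here that the completely distributive structure --- through the relation $\prec$ and the identity $a=\bigvee\beta(a)$ --- combines with the hypothesis $\mathcal{C}_a=\bigcap\{\mathcal{C}_b:b\in\beta(a)\}$; without that hypothesis $\mathcal{C}_{[a]}$ could be strictly larger than $\mathcal{C}_a$. Conditions (LMC1)--(LMC3) for $\mathcal{C}$ then come for free from Theorem~\ref{23456}. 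I would also remark that this result is the $\beta$-analogue of the preceding theorem formulated with the $\alpha$-family, and its proof is dual to the $\mathcal{C}^{[a]}$-version given above.
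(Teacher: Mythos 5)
Your proof is correct: the construction $\mathcal{C}(A)=\bigvee\{a\in M\setminus\{\bot_M\}:A\in\mathcal{C}_a\}$, the antitonicity $\mathcal{C}_c\subseteq\mathcal{C}_b$ for $b\le c$ derived from the hypothesis, the use of $b\prec a\le\bigvee\{c:A\in\mathcal{C}_c\}$ to get $\mathcal{C}_{[a]}\subseteq\mathcal{C}_a$, and the final appeal to Theorem~\ref{23456} are all sound. The paper dismisses this proof as ``straightforward'' and gives no argument, and what you have written is precisely the intended standard argument (dual to the $\alpha$-cut version proved in Theorem~\ref{mhn}), with the details correctly supplied.
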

\begin{proof} This is straightforward.
\end{proof}



\begin{definition}  Let $\mathcal{C}, \mathcal{D}$ be  $(L,M)$-fuzzy convexities on $X$.
If $\mathcal{C}(A) \leq  \mathcal{D}(A)$ for all $  A\in L^{X}$, i.e., $\mathcal{C} \leq  \mathcal{D}$,   then $\mathcal{C}$ is coarser than $\mathcal{D}$ and $\mathcal{D}$ is finer than $\mathcal{C}$.
\end{definition}

\begin{theorem} \label{cderfv}   Let $\{\mathcal{C}_t: t\in T\}$ be a family of $(L,M)$-fuzzy convexities on $X$.
Then $\bigwedge\limits_{t\in T}\mathcal{C}_t$   is an $(L,M)$-fuzzy
convexity on $X$, where $\bigwedge\limits_{t\in T}\mathcal{C}_t: L^X\rightarrow M$ is defined by
 $\left(\bigwedge\limits_{t\in T}\mathcal{C}_t\right)(A)=
\bigwedge\limits_{t\in T}\mathcal{C}_t (A)$ for each  $A\in L^X$.  Obviously, $\bigwedge\limits_{t\in T}\mathcal{C}_t$ is  coarser than $\mathcal{C}_t$ for all $t\in T$.
\end{theorem}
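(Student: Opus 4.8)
The plan is to verify the three axioms (LMC1)--(LMC3) directly for the pointwise meet $\mathcal{C}:=\bigwedge_{t\in T}\mathcal{C}_t$, using only that each $\mathcal{C}_t$ satisfies these axioms together with the commutativity and associativity of arbitrary meets in the complete lattice $M$.

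For (LMC1) I would simply compute $\mathcal{C}(\chi_{\emptyset})=\bigwedge_{t\in T}\mathcal{C}_t(\chi_{\emptyset})=\bigwedge_{t\in T}\top_M=\top_M$, and likewise $\mathcal{C}(\chi_X)=\top_M$. The degenerate case $T=\emptyset$ is covered by the convention $\bigwedge\emptyset=\top_M$, which makes $\mathcal{C}$ the constant map $\top_M$, trivially an $(L,M)$-fuzzy convexity.

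For (LMC2), given a nonempty family $\{A_i:i\in\Omega\}\subseteq L^X$, I would chain
$$\mathcal{C}\Big(\bigwedge_{i\in\Omega}A_i\Big)=\bigwedge_{t\in T}\mathcal{C}_t\Big(\bigwedge_{i\in\Omega}A_i\Big)\ge\bigwedge_{t\in T}\bigwedge_{i\in\Omega}\mathcal{C}_t(A_i)=\bigwedge_{i\in\Omega}\bigwedge_{t\in T}\mathcal{C}_t(A_i)=\bigwedge_{i\in\Omega}\mathcal{C}(A_i),$$
where the inequality applies (LMC2) for each $\mathcal{C}_t$ together with the monotonicity of $\bigwedge_{t\in T}$, and the middle equality is the interchange of the two meets. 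The argument for (LMC3) is word-for-word the same once $\{A_i:i\in\Omega\}$ is additionally assumed to be totally ordered by inclusion, invoking (LMC3) for each $\mathcal{C}_t$ in place of (LMC2). Finally, ``coarser'' is immediate: for every $s\in T$ and every $A\in L^X$ we have $\mathcal{C}(A)=\bigwedge_{t\in T}\mathcal{C}_t(A)\le\mathcal{C}_s(A)$.

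No step presents a real obstacle; the only points deserving a line of care are the empty-index-set convention in (LMC1) and the justification that $\bigwedge_{t}\bigwedge_{i}=\bigwedge_{i}\bigwedge_{t}$ in a complete lattice. As an alternative one could instead invoke Theorem~\ref{23456}: since $\big(\bigwedge_{t\in T}\mathcal{C}_t\big)_{[a]}=\bigcap_{t\in T}(\mathcal{C}_t)_{[a]}$ for every $a\in M\setminus\{\bot_M\}$, the statement reduces to the elementary fact that an arbitrary intersection of $L$-convexities is again an $L$-convexity, which one checks directly from (LC1)--(LC3).
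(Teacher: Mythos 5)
Your proof is correct and is exactly the routine verification the paper has in mind when it dismisses this theorem with ``This is straightforward'': check (LMC1)--(LMC3) pointwise using monotonicity and the interchange of iterated meets in the complete lattice $M$. Both your direct argument and your alternative via Theorem~\ref{23456} and the identity $\left(\bigwedge_{t\in T}\mathcal{C}_t\right)_{[a]}=\bigcap_{t\in T}(\mathcal{C}_t)_{[a]}$ are sound, and your attention to the empty-index-set convention is a nice touch.
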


\begin{proof} This is straightforward.
\end{proof}


\section{$(L,M)$-fuzzy  convexity preserving functions}

In this section, we shall generalize the notion of convexity preserving functions to
lattice-valued setting.

\begin{definition}  Let $(X,\mathcal{C})$ and $(Y,\mathcal{D})$ be $(L,M)$-fuzzy convex structures.  A function $f: X\rightarrow Y$ is called  an $(L,M)$-fuzzy convexity preserving function
if
$\mathcal{C}\left(f^{\leftarrow}_L(B)\right)\ge \mathcal{D}(B)$ for all $B\in
L^Y$.
\end{definition}

A $({\bf2},M)$-fuzzy convexity preserving function is  an
$M$-fuzzifying convexity preserving function in \cite{Shi6}.



\begin{theorem}   Let $(Y, \mathcal{D})$ be an $(L,M)$-fuzzy convex structure and $f: X\rightarrow
Y$ a surjective function.  Define a mapping
$f^{\leftarrow}_L(\mathcal{D}):L^X\rightarrow M$ by $$\forall A\in L^X,\
f^{\leftarrow}_L(\mathcal{D})(A)=\bigvee\left\{\mathcal{D}(B): f^{\leftarrow}_L(B)=A\right\}.$$
Then $(X,f^{\leftarrow}_L(\mathcal{D}))$ is an $(L,M)$-fuzzy convex structure.
\end{theorem}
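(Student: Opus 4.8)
The plan is to verify directly that $f^{\leftarrow}_L(\mathcal{D})$ satisfies (LMC1)--(LMC3). Throughout, the key fact I would exploit is that $f$ is surjective, so $f^{\rightarrow}_L\bigl(f^{\leftarrow}_L(B)\bigr)=B$ for every $B\in L^Y$, and that $f^{\leftarrow}_L$ commutes with arbitrary meets and joins: $f^{\leftarrow}_L\bigl(\bigwedge_i B_i\bigr)=\bigwedge_i f^{\leftarrow}_L(B_i)$ and $f^{\leftarrow}_L\bigl(\bigvee_i B_i\bigr)=\bigvee_i f^{\leftarrow}_L(B_i)$, since $f^{\leftarrow}_L(B)=B\circ f$ acts pointwise. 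I would also note that $f^{\leftarrow}_L$ is injective on $L^Y$ when $f$ is surjective (if $B\circ f=B'\circ f$ then $B=B'$), so in fact the supremum defining $f^{\leftarrow}_L(\mathcal{D})(A)$ is either over the empty set (giving $\bot_M$, when $A$ is not of the form $f^{\leftarrow}_L(B)$) or over a single $B$; this simplification makes all three verifications transparent, though one can also argue without it.

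For (LMC1): since $f^{\leftarrow}_L(\chi_\emptyset)=\chi_\emptyset$ and $f^{\leftarrow}_L(\chi_X)$, wait --- more carefully, $f^{\leftarrow}_L(\chi_{\emptyset_Y})=\chi_{\emptyset_X}$ and $f^{\leftarrow}_L(\chi_Y)=\chi_X$ because $f$ is surjective; hence $f^{\leftarrow}_L(\mathcal{D})(\chi_{\emptyset_X})\ge \mathcal{D}(\chi_{\emptyset_Y})=\top_M$ and likewise for $\chi_X$, so both equal $\top_M$. For (LMC2): given a nonempty family $\{A_i:i\in\Omega\}\subseteq L^X$, I want $f^{\leftarrow}_L(\mathcal{D})\bigl(\bigwedge_i A_i\bigr)\ge\bigwedge_i f^{\leftarrow}_L(\mathcal{D})(A_i)$. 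Fix arbitrary $B_i\in L^Y$ with $f^{\leftarrow}_L(B_i)=A_i$; then $f^{\leftarrow}_L\bigl(\bigwedge_i B_i\bigr)=\bigwedge_i f^{\leftarrow}_L(B_i)=\bigwedge_i A_i$, so $\bigwedge_i B_i$ is a legitimate witness in the supremum defining $f^{\leftarrow}_L(\mathcal{D})\bigl(\bigwedge_i A_i\bigr)$, whence that value is $\ge\mathcal{D}\bigl(\bigwedge_i B_i\bigr)\ge\bigwedge_i\mathcal{D}(B_i)$ by (LMC2) for $\mathcal{D}$. Taking the supremum over all choices of the $B_i$ on the right (one choice per index) and using that $\bigvee$ distributes appropriately over $\bigwedge$ in this finite-per-index sense --- or, cleaner, using the injectivity remark so that each $A_i$ has a unique preimage $B_i$ and $\bigwedge_i f^{\leftarrow}_L(\mathcal{D})(A_i)=\bigwedge_i\mathcal{D}(B_i)$ exactly --- gives the claim. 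The argument for (LMC3) is identical with $\bigvee$ in place of $\bigwedge$, noting that if $\{A_i\}$ is totally ordered by inclusion and $A_i=f^{\leftarrow}_L(B_i)$ with the $B_i$ chosen as the unique preimages, then $\{B_i\}$ is also totally ordered by inclusion (because $f^{\leftarrow}_L$ is an order-embedding: $A_i\le A_j\iff B_i\circ f\le B_j\circ f\iff B_i\le B_j$ by surjectivity), so (LMC3) for $\mathcal{D}$ applies to $\bigvee_i B_i$, whose $f^{\leftarrow}_L$-image is $\bigvee_i A_i$.

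The only real subtlety --- and the step I would present most carefully --- is the interplay between the defining supremum and the meets/joins in (LMC2)/(LMC3): one must make sure that a family of witnesses $B_i$ for the individual $A_i$ assembles into a single witness for $\bigwedge_i A_i$ (resp.\ $\bigvee_i A_i$) while respecting the total-order hypothesis in (LMC3). Invoking surjectivity of $f$ to get that $f^{\leftarrow}_L$ is an injective order-embedding dissolves this subtlety completely, reducing $f^{\leftarrow}_L(\mathcal{D})$ on its effective domain $\{f^{\leftarrow}_L(B):B\in L^Y\}$ to an isomorphic copy of $\mathcal{D}$; off that domain $f^{\leftarrow}_L(\mathcal{D})$ takes the value $\bot_M$, which never obstructs the "$\ge$" inequalities. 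Hence the three axioms transfer from $\mathcal{D}$, and $(X,f^{\leftarrow}_L(\mathcal{D}))$ is an $(L,M)$-fuzzy convex structure. I would also remark in passing that by construction $f$ becomes an $(L,M)$-fuzzy convexity preserving function from $(X,f^{\leftarrow}_L(\mathcal{D}))$ to $(Y,\mathcal{D})$, and indeed $f^{\leftarrow}_L(\mathcal{D})$ is the finest such convexity on $X$ --- this is the natural "initial/quotient" role of the construction and is what motivates the surjectivity assumption.
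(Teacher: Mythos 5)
Your proof is correct and follows the same overall skeleton as the paper's (direct verification of (LMC1)--(LMC3), assembling a single witness for $\bigwedge_i A_i$, resp.\ $\bigvee_i A_i$, from witnesses for the individual $A_i$), but you dispatch the one nontrivial step by a different and cleaner device. The paper extracts witnesses $B_i$ with $\mathcal{D}(B_i)\ge a$ from the defining suprema via the relation $a\prec\bigwedge_i f^{\leftarrow}_L(\mathcal{D})(A_i)$, leaning on complete distributivity of $M$, and then recovers the inequality by taking the join over all such $a$. You instead observe that surjectivity of $f$ makes $f^{\leftarrow}_L$ an injective order-embedding, so each supremum in the definition of $f^{\leftarrow}_L(\mathcal{D})$ ranges over at most one element and the witnesses are unique; this removes the need for the $\prec$-machinery entirely and makes the total-order transfer in (LMC3) immediate (the paper also uses surjectivity for exactly this last point). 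Your route is the more economical one here; the paper's $\prec$-based selection is the technique that would be needed in situations where the defining supremum is genuinely nondegenerate, and it recurs elsewhere in the paper, which is presumably why the authors use it uniformly.

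One small correction to your closing aside: with the paper's conventions ($\mathcal{C}\le\mathcal{D}$ means $\mathcal{C}$ is \emph{coarser}), $f^{\leftarrow}_L(\mathcal{D})$ is the \emph{coarsest} $(L,M)$-fuzzy convexity on $X$ making $f$ convexity preserving into $(Y,\mathcal{D})$, not the finest; this is exactly the content of the characterization $f^{\leftarrow}_L(\mathcal{D})\le\mathcal{C}$ proved in the paper immediately afterwards. This does not affect the proof of the theorem itself.
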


\begin{proof}  {\bf (LMC1)} holds from the following equalities:
$$f^{\leftarrow}_L(\mathcal{D})(\chi_{\emptyset})=\bigvee\left\{\mathcal{D}(B):
f^{\leftarrow}_L(B)=\chi_{\emptyset}\right\}=\mathcal{D}(\chi_{\emptyset})=\top_M,$$
and
$$f^{\leftarrow}_L(\mathcal{D})(\chi_{X})=\bigvee\left\{\mathcal{D}(B):
f^{\leftarrow}_L(B)=\chi_{X}\right\}=\mathcal{D}(\chi_{Y})=\top_M.$$

{\bf (LMC2)}
For any nonempty set $\{A_i: i\in \Omega\}\subseteq L^X$, let $a$ be any element in $M$ with the property of  $\bigwedge\limits_{i\in \Omega}f^{\leftarrow}_L(\mathcal{D})
(A_i)\succ a$. For each $i\in \Omega$, $\bigvee\left\{\mathcal{D}(B):
f^{\leftarrow}_L(B)=A_i\right\}=f^{\leftarrow}_L(\mathcal{D})
(A_i)\succ a$.  Then for each  $i\in \Omega$, there exists $B_i\in L^{X}$  such that
$f^{\leftarrow}_L(B_i)=A_i$ and $\mathcal{D}(B_i)\geq a$.  Note that
 $f^{\leftarrow}_L\left(\bigwedge\limits_{i\in \Omega}B_i\right)= \bigwedge \limits_{i\in \Omega}f^{\leftarrow}_L(B_i)=\bigwedge\limits_{i\in \Omega}A_{i}$ and $\mathcal{D}\left(\bigwedge\limits_{i\in \Omega}B_i \right)\geq \bigwedge\limits_{i\in \Omega} \mathcal{D}(B_i) \geq a$. Finally we have
$$f^{\leftarrow}_L(\mathcal{D})\left(\bigwedge\limits_{i\in \Omega}A_i\right)= \bigvee\left\{\mathcal{D}(B):
f^{\leftarrow}_L(B)=\bigwedge\limits_{i\in \Omega}A_i\right\}\geq   \mathcal{D}\left(\bigwedge\limits_{i\in \Omega}B_i\right)\geq a.$$  This implies
$f^{\leftarrow}_L(\mathcal{D})\left(\bigwedge\limits_{i\in \Omega}A_i\right)\geq \bigwedge\limits_{i\in \Omega}f^{\leftarrow}_L(\mathcal{D})
(A_i)$.

{\bf (LMC3)} For any nonempty set $\{A_i: i\in \Omega\}\subseteq L^X$, which is totally ordered by inclusion,   let $a$ be any element in $M$ with the property of  $\bigwedge\limits_{i\in \Omega}f^{\leftarrow}_L(\mathcal{D})
(A_i)\succ a$, that is, $\bigwedge\limits_{i\in \Omega}\bigvee\left\{\mathcal{D}(B):
f^{\leftarrow}_L(B)=A_i\right\}\succ a$. Then $\forall i\in \Omega$, $\bigvee\left\{\mathcal{D}(B):
f^{\leftarrow}_L(B)=A_i\right\}=f^{\leftarrow}_L(\mathcal{D})
(A_i)\succ a$. For each  $i\in \Omega$, there exists $B_i\in L^{X}$  such that
$f^{\leftarrow}_L(B_i)=A_i$ and $\mathcal{D}(B_i)\geq a$.
Since $f$  is  surjective and  $\{A_i: i\in \Omega\}$  is totally ordered by inclusion, we have $\{B_i: i\in \Omega\}$  is totally ordered by inclusion.
Note that
 $f^{\leftarrow}_L\left(\bigvee\limits_{i\in \Omega}B_i\right)= \bigvee \limits_{i\in \Omega}f^{\leftarrow}_L(B_i)=\bigvee\limits_{i\in \Omega}A_{i}$ and $\mathcal{D}\left(\bigvee\limits_{i\in \Omega}B_i \right)\geq \bigwedge\limits_{i\in \Omega} \mathcal{D}(B_i) \geq a$. Finally we have
$$f^{\leftarrow}_L(\mathcal{D})\left(\bigvee\limits_{i\in \Omega}A_i\right)= \bigvee\left\{\mathcal{D}(B):
f^{\leftarrow}_L(B)=\bigvee\limits_{i\in \Omega}A_i\right\}\geq   \mathcal{D}\left(\bigvee\limits_{i\in \Omega}B_i\right)\geq a.$$  This implies
$f^{\leftarrow}_L(\mathcal{D})\left(\bigvee\limits_{i\in \Omega}A_i\right)\geq \bigwedge\limits_{i\in \Omega}f^{\leftarrow}_L(\mathcal{D})
(A_i)$.
\end{proof}

The following theorem gives a characterization of  $(L,M)$-fuzzy
convexity preserving functions.

\begin{theorem}  Let $(X,\mathcal{C})$ and $(Y,\mathcal{D})$ be two $(L,M)$-fuzzy convex structures. A surjective function $f: X\rightarrow Y$ is an $(L,M)$-fuzzy convexity preserving function  if and
only if $ f^{\leftarrow}_L(\mathcal{D})(A) \le \mathcal{C}(A)$ for all $A\in
L^X$.
\end{theorem}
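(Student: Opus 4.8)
The plan is simply to unwind the two relevant definitions; no machinery beyond them is needed, and there is essentially no obstacle — the content of the theorem is the adjunction-type relationship $\mathcal{D}(B)\le\mathcal{C}(f^{\leftarrow}_L(B))$ versus $f^{\leftarrow}_L(\mathcal{D})(A)\le\mathcal{C}(A)$, which are transposes of one another under the defining join.

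For necessity, assume $f$ is an $(L,M)$-fuzzy convexity preserving function and fix $A\in L^X$. For every $B\in L^Y$ with $f^{\leftarrow}_L(B)=A$, the defining inequality of a convexity preserving function gives $\mathcal{D}(B)\le\mathcal{C}(f^{\leftarrow}_L(B))=\mathcal{C}(A)$. Taking the join over all such $B$ yields $f^{\leftarrow}_L(\mathcal{D})(A)=\bigvee\{\mathcal{D}(B):f^{\leftarrow}_L(B)=A\}\le\mathcal{C}(A)$. If the index set is empty the join is $\bot_M$ and the inequality holds trivially.

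For sufficiency, assume $f^{\leftarrow}_L(\mathcal{D})(A)\le\mathcal{C}(A)$ for all $A\in L^X$, fix $B\in L^Y$, and set $A=f^{\leftarrow}_L(B)$. Since $B$ is itself one of the $L$-sets whose $f^{\leftarrow}_L$-preimage equals $A$, it is one of the terms in the join defining $f^{\leftarrow}_L(\mathcal{D})(A)$, so $\mathcal{D}(B)\le f^{\leftarrow}_L(\mathcal{D})(A)\le\mathcal{C}(A)=\mathcal{C}(f^{\leftarrow}_L(B))$. As $B\in L^Y$ was arbitrary, $f$ is an $(L,M)$-fuzzy convexity preserving function. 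I would add the remark that surjectivity of $f$ plays no role in this particular argument; it is retained only because the operator $f^{\leftarrow}_L(\mathcal{D})$ was introduced under that hypothesis in the preceding theorem, where it is needed to ensure $(X,f^{\leftarrow}_L(\mathcal{D}))$ is an $(L,M)$-fuzzy convex structure. The only point requiring a word of care is the empty-join edge case, handled by $\bigvee\emptyset=\bot_M$.
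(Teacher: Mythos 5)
Your proof is correct and follows essentially the same route as the paper's: the necessity direction bounds each term $\mathcal{D}(B)$ in the defining join by $\mathcal{C}(A)$, and the sufficiency direction observes that $\mathcal{D}(B)$ is one of the terms in the join defining $f^{\leftarrow}_L(\mathcal{D})(f^{\leftarrow}_L(B))$. Your added remarks on the empty-join case and the dispensability of surjectivity are accurate but inessential refinements of the same argument.
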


\begin{proof}   Necessity. If $f: X\rightarrow Y$ is an $(L,M)$-fuzzy convexity preserving function,
then $\mathcal{C}\left(f^{\leftarrow}_L(B)\right)\ge \mathcal{D}(B)$ for all
$B\in L^Y$. Hence for all $A\in
L^X$, we have
$$f^{\leftarrow}_L(\mathcal{D})(A)
= \bigvee\left\{\mathcal{D}(B):
f^{\leftarrow}_L(B)=A\right\}
\le  \bigvee\left\{\mathcal{C}\left(f^{\leftarrow}_L(B)\right):
f^{\leftarrow}_L(B)=A\right\}
= \mathcal{C}(A).$$


Sufficiency. If $f^{\leftarrow}_L(\mathcal{D})(A) \le \mathcal{C}(A)$ for all
$A\in L^X$, then
 $$\mathcal{D}(B)\le
\bigvee\left\{\mathcal{D}(G):
f^{\leftarrow}_L(G)=f^{\leftarrow}_L(B)\right\}=
f^{\leftarrow}_L(\mathcal{D})(f^{\leftarrow}_L(B))\le
\mathcal{C}(f^{\leftarrow}_L(B))$$ for all $B\in L^Y$. This shows that $f:
X\rightarrow Y$ is an $(L,M)$-fuzzy convexity preserving function.
\end{proof}

The following   theorems are trivial.

\begin{theorem}\label{246}  \rm If $f: (X,\mathcal{C})\rightarrow (Y,\mathcal{D})$ and $g: (Y,\mathcal{D})\rightarrow
(Z, \mathcal{H})$ are $(L,M)$-fuzzy convexity preserving functions, then $g\circ f:
(X,\mathcal{C})\rightarrow  (Z,\mathcal{H})$ is  an $(L,M)$-fuzzy
convexity preserving function.
\end{theorem}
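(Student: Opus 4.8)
The plan is to chase the definition directly, exploiting the fact that the $L$-fuzzy preimage operator reverses composition. First I would record the identity $(g\circ f)^{\leftarrow}_L = f^{\leftarrow}_L \circ g^{\leftarrow}_L$ on $L^Z$. This is immediate from the defining formula $h^{\leftarrow}_L(B)=B\circ h$: for any $C\in L^Z$ we have
$$(g\circ f)^{\leftarrow}_L(C) = C\circ (g\circ f) = (C\circ g)\circ f = f^{\leftarrow}_L\left(g^{\leftarrow}_L(C)\right),$$
so $f^{\leftarrow}_L$ is functorial in the expected (contravariant) way.

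Next I would fix an arbitrary $C\in L^Z$ and combine the two hypotheses. Since $g:(Y,\mathcal{D})\rightarrow (Z,\mathcal{H})$ is $(L,M)$-fuzzy convexity preserving, $\mathcal{D}\left(g^{\leftarrow}_L(C)\right)\ge \mathcal{H}(C)$. Applying the convexity preserving property of $f:(X,\mathcal{C})\rightarrow(Y,\mathcal{D})$ to the $L$-set $g^{\leftarrow}_L(C)\in L^Y$ gives $\mathcal{C}\left(f^{\leftarrow}_L\left(g^{\leftarrow}_L(C)\right)\right)\ge \mathcal{D}\left(g^{\leftarrow}_L(C)\right)$. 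Using the composition identity above, these inequalities chain together as
$$\mathcal{C}\left((g\circ f)^{\leftarrow}_L(C)\right) = \mathcal{C}\left(f^{\leftarrow}_L\left(g^{\leftarrow}_L(C)\right)\right) \ge \mathcal{D}\left(g^{\leftarrow}_L(C)\right) \ge \mathcal{H}(C),$$
which is exactly the condition for $g\circ f:(X,\mathcal{C})\rightarrow(Z,\mathcal{H})$ to be an $(L,M)$-fuzzy convexity preserving function.

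There is no genuine obstacle here; the argument is purely formal, and the only point worth verifying carefully is the preimage composition identity $(g\circ f)^{\leftarrow}_L = f^{\leftarrow}_L \circ g^{\leftarrow}_L$, which is a one-line check. I would remark that, together with the evident fact that identity maps are $(L,M)$-fuzzy convexity preserving and that composition of functions is associative, this theorem guarantees that $(L,M)$-fuzzy convex structures and $(L,M)$-fuzzy convexity preserving functions constitute a category $\mathbf{LMCS}$, which is presumably why the statement is recorded here.
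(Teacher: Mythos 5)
Your proof is correct and is exactly the standard argument the paper has in mind; the paper simply omits it, labelling the theorem ``trivial.'' The key step, the contravariant composition identity $(g\circ f)^{\leftarrow}_L = f^{\leftarrow}_L\circ g^{\leftarrow}_L$, followed by chaining the two defining inequalities, is the intended (and essentially only) route.
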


\begin{theorem}\rm Let $(X,\mathcal{C})$ and $(Y,\mathcal{D})$ be $(L,M)$-fuzzy convex structures. Then  a function $f: (X,\mathcal{C})\rightarrow (Y,\mathcal{D})$ is an  $(L,M)$-fuzzy
convexity preserving function if and only if $f: (X,\mathcal{C}_{[a]})\rightarrow
(Y,\mathcal{D}_{[a]})$ is an $L$-convexity preserving function for any $a\in
M\backslash\{\bot_M\}$.\end{theorem}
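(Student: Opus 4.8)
The plan is to reduce the equivalence to the definitions by a cut-set argument, in the same spirit as the proof of Theorem~\ref{23456}. Observe first that, since $\mathcal{C}$ and $\mathcal{D}$ are $(L,M)$-fuzzy convexities, Theorem~\ref{23456} guarantees that $\mathcal{C}_{[a]}$ and $\mathcal{D}_{[a]}$ are genuine $L$-convexities for every $a\in M\backslash\{\bot_M\}$, so the phrase ``$f:(X,\mathcal{C}_{[a]})\rightarrow(Y,\mathcal{D}_{[a]})$ is an $L$-convexity preserving function'' is meaningful; recall that it means $f^{\leftarrow}_L(B)\in\mathcal{C}_{[a]}$ whenever $B\in\mathcal{D}_{[a]}$.

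For necessity, assume $\mathcal{C}\left(f^{\leftarrow}_L(B)\right)\ge\mathcal{D}(B)$ for all $B\in L^Y$. Fix $a\in M\backslash\{\bot_M\}$ and take any $B\in\mathcal{D}_{[a]}$, i.e.\ $\mathcal{D}(B)\ge a$. Then $\mathcal{C}\left(f^{\leftarrow}_L(B)\right)\ge\mathcal{D}(B)\ge a$, so $f^{\leftarrow}_L(B)\in\mathcal{C}_{[a]}$; hence $f:(X,\mathcal{C}_{[a]})\rightarrow(Y,\mathcal{D}_{[a]})$ preserves $L$-convex sets.

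For sufficiency, fix an arbitrary $B\in L^Y$ and put $a:=\mathcal{D}(B)$. If $a=\bot_M$, the inequality $\mathcal{C}\left(f^{\leftarrow}_L(B)\right)\ge\mathcal{D}(B)$ holds automatically. Otherwise $a\in M\backslash\{\bot_M\}$, so $B\in\mathcal{D}_{[a]}$, and by hypothesis $f^{\leftarrow}_L(B)\in\mathcal{C}_{[a]}$, that is, $\mathcal{C}\left(f^{\leftarrow}_L(B)\right)\ge a=\mathcal{D}(B)$. Since $B$ was arbitrary, $f$ is an $(L,M)$-fuzzy convexity preserving function.

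The argument is entirely routine; the only point that calls for a moment's care is the degenerate case $\mathcal{D}(B)=\bot_M$ in the sufficiency direction, where one cannot pass to a cut convexity but the desired inequality is trivial anyway. No genuine obstacle is expected.
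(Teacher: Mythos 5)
Your argument is correct and is exactly the routine cut-set argument the paper has in mind: the paper states this result without proof (it is grouped among the theorems declared trivial), and your two directions, including the explicit handling of the degenerate case $\mathcal{D}(B)=\bot_M$ in the sufficiency part, supply precisely the omitted details. Nothing further is needed.
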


\begin{theorem}\rm Let $(X,\mathcal{C})$ and
$(Y,\mathcal{D})$ be $(L,M)$-fuzzy convex structures. Then a function $f:
(X,\mathcal{C})\rightarrow (Y,\mathcal{D})$ is  an   $(L,M)$-fuzzy  convexity preserving function  if and
only if   $f: (X,\mathcal{C}^{[a]})\rightarrow (Y,\mathcal{D}^{[a]})$ is an
$L$-convexity preserving function  for any $a\in \alpha(\bot_M)$.\end{theorem}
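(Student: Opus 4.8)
The plan is to deduce the result from the cut-set characterisation of $(L,M)$-fuzzy convexities in Theorem~\ref{mhn}, which already tells us that $\mathcal{C}^{[a]}$ and $\mathcal{D}^{[a]}$ are $L$-convexities for every $a\in\alpha(\bot_M)$, so that the statement is meaningful. Two elementary facts about $\alpha$ will be used throughout. First, $\alpha$ is order-reversing: if $x\ge y$ then $x\wedge y=y$, so the $\bigwedge-\bigcup$ property of $\alpha$ gives $\alpha(y)=\alpha(x\wedge y)=\alpha(x)\cup\alpha(y)\supseteq\alpha(x)$; in particular $\alpha(x)\subseteq\alpha(\bot_M)$ for all $x\in M$. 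Second, $b=\bigwedge\alpha(b)$ for every $b\in M$.

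For necessity, suppose $f$ is $(L,M)$-fuzzy convexity preserving, i.e. $\mathcal{C}(f^{\leftarrow}_L(B))\ge\mathcal{D}(B)$ for all $B\in L^Y$. Fix $a\in\alpha(\bot_M)$ and let $B\in\mathcal{D}^{[a]}$, i.e. $a\notin\alpha(\mathcal{D}(B))$. Since $\alpha$ is order-reversing, $\alpha(\mathcal{C}(f^{\leftarrow}_L(B)))\subseteq\alpha(\mathcal{D}(B))$, so $a\notin\alpha(\mathcal{C}(f^{\leftarrow}_L(B)))$, which means $f^{\leftarrow}_L(B)\in\mathcal{C}^{[a]}$. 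Hence $f:(X,\mathcal{C}^{[a]})\to(Y,\mathcal{D}^{[a]})$ is $L$-convexity preserving.

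For sufficiency, suppose that for each $a\in\alpha(\bot_M)$ the map $f^{\leftarrow}_L$ carries $\mathcal{D}^{[a]}$ into $\mathcal{C}^{[a]}$, and fix $B\in L^Y$. Reading the hypothesis contrapositively: for $a\in\alpha(\bot_M)$, $a\in\alpha(\mathcal{C}(f^{\leftarrow}_L(B)))$ implies $a\in\alpha(\mathcal{D}(B))$. Since every element of $\alpha(\mathcal{C}(f^{\leftarrow}_L(B)))$ already lies in $\alpha(\bot_M)$, this yields the inclusion $\alpha(\mathcal{C}(f^{\leftarrow}_L(B)))\subseteq\alpha(\mathcal{D}(B))$. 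Taking infima, $\mathcal{C}(f^{\leftarrow}_L(B))=\bigwedge\alpha(\mathcal{C}(f^{\leftarrow}_L(B)))\ge\bigwedge\alpha(\mathcal{D}(B))=\mathcal{D}(B)$, which is the desired inequality; therefore $f$ is $(L,M)$-fuzzy convexity preserving.

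I do not expect a genuine obstacle: the whole content is the passage between the single pointwise inequality $\mathcal{C}(f^{\leftarrow}_L(B))\ge\mathcal{D}(B)$ and the family of membership statements $f^{\leftarrow}_L(\mathcal{D}^{[a]})\subseteq\mathcal{C}^{[a]}$, carried out through $b=\bigwedge\alpha(b)$ and the order-reversal of $\alpha$. The one point deserving attention is the observation $\alpha(x)\subseteq\alpha(\bot_M)$, which is precisely what makes restricting to $a\in\alpha(\bot_M)$ harmless and lets the argument go through with the full $\alpha$-sets.
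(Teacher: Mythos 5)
Your proof is correct. The paper dismisses this theorem as ``trivial'' and offers no proof, but your argument --- necessity via the order-reversal of $\alpha$ (from $\alpha(\bigwedge_i a_i)=\bigcup_i\alpha(a_i)$), and sufficiency via the observation that $\alpha(\mathcal{C}(f^{\leftarrow}_L(B)))\subseteq\alpha(\bot_M)$ together with $b=\bigwedge\alpha(b)$ --- is exactly the argument implicit in the authors' own proof of the cut-set characterization of $(L,M)$-fuzzy convexities (Theorem~\ref{mhn}), so it is the intended one.
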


%


\section{ Quotient  $(L,M)$-fuzzy  convex structures }

In this section, the notions of  quotient structures and quotient functions are generalized to lattice-valued  setting.

\begin{theorem}\label{1122} Let $(X,\mathcal{C})$ be an $(L,M)$-fuzzy convex structure and $f: X\rightarrow
Y$ a  surjective function. Define a mapping
$\mathcal{C}_{/f}:L^Y\rightarrow M$ by
 $$\forall B\in L^Y,\
\mathcal{C}_{/f}(B)=\mathcal{C}(f^{\leftarrow}_L(B)).$$
Then $(Y,\mathcal{C}_{/f})$ is an $(L,M)$-fuzzy convex structure and we call  $\mathcal{C}_{/f}$  a quotient $(L,M)$-fuzzy convexity on $Y$  with respect to $f$ and $\mathcal{C}$.  Moreover, it is easy to see that $f$ is an $(L,M)$-fuzzy convexity preserving function from $(X,\mathcal{C})$ to $(Y,\mathcal{C}_{/f})$.
\end{theorem}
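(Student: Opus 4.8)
The plan is to verify the three axioms (LMC1)--(LMC3) directly for the mapping $\mathcal{C}_{/f}:L^Y\rightarrow M$ defined by $\mathcal{C}_{/f}(B)=\mathcal{C}(f^{\leftarrow}_L(B))$, using the standard lattice-theoretic identities for the preimage operator $f^{\leftarrow}_L$ together with the surjectivity of $f$. The key facts I would invoke are: $f^{\leftarrow}_L(\chi_\emptyset)=\chi_\emptyset$, $f^{\leftarrow}_L(\chi_Y)=\chi_X$ (this uses nothing beyond the definition), $f^{\leftarrow}_L\left(\bigwedge_{i\in\Omega}B_i\right)=\bigwedge_{i\in\Omega}f^{\leftarrow}_L(B_i)$, $f^{\leftarrow}_L\left(\bigvee_{i\in\Omega}B_i\right)=\bigvee_{i\in\Omega}f^{\leftarrow}_L(B_i)$, and the fact that $f^{\leftarrow}_L$ preserves the inclusion order, so that it carries a chain in $L^Y$ to a chain in $L^X$.

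First I would check (LMC1): $\mathcal{C}_{/f}(\chi_\emptyset)=\mathcal{C}(f^{\leftarrow}_L(\chi_\emptyset))=\mathcal{C}(\chi_\emptyset)=\top_M$ and likewise $\mathcal{C}_{/f}(\chi_Y)=\mathcal{C}(f^{\leftarrow}_L(\chi_Y))=\mathcal{C}(\chi_X)=\top_M$, where the last equalities use (LMC1) for $\mathcal{C}$. Next, for (LMC2), given a nonempty family $\{B_i:i\in\Omega\}\subseteq L^Y$, I would write
$$\mathcal{C}_{/f}\left(\bigwedge_{i\in\Omega}B_i\right)=\mathcal{C}\left(f^{\leftarrow}_L\left(\bigwedge_{i\in\Omega}B_i\right)\right)=\mathcal{C}\left(\bigwedge_{i\in\Omega}f^{\leftarrow}_L(B_i)\right)\geq\bigwedge_{i\in\Omega}\mathcal{C}\left(f^{\leftarrow}_L(B_i)\right)=\bigwedge_{i\in\Omega}\mathcal{C}_{/f}(B_i),$$
where the inequality is exactly (LMC2) for $\mathcal{C}$ applied to the family $\{f^{\leftarrow}_L(B_i):i\in\Omega\}$.

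For (LMC3), I would take a nonempty family $\{B_i:i\in\Omega\}\subseteq L^Y$ totally ordered by inclusion; since $f^{\leftarrow}_L$ is order-preserving, $\{f^{\leftarrow}_L(B_i):i\in\Omega\}$ is a nonempty chain in $L^X$, so (LMC3) for $\mathcal{C}$ gives
$$\mathcal{C}_{/f}\left(\bigvee_{i\in\Omega}B_i\right)=\mathcal{C}\left(\bigvee_{i\in\Omega}f^{\leftarrow}_L(B_i)\right)\geq\bigwedge_{i\in\Omega}\mathcal{C}\left(f^{\leftarrow}_L(B_i)\right)=\bigwedge_{i\in\Omega}\mathcal{C}_{/f}(B_i).$$
Finally, that $f:(X,\mathcal{C})\rightarrow(Y,\mathcal{C}_{/f})$ is $(L,M)$-fuzzy convexity preserving is immediate from the definition: for every $B\in L^Y$ we have $\mathcal{C}(f^{\leftarrow}_L(B))=\mathcal{C}_{/f}(B)\geq\mathcal{C}_{/f}(B)$, i.e. the required inequality holds with equality. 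There is no real obstacle here; the only point where surjectivity of $f$ is genuinely used is in pinning down $f^{\leftarrow}_L(\chi_Y)=\chi_X$ in (LMC1) (and, more subtly, it guarantees the construction is well-behaved when comparing with the non-quotient preimage construction), so I would make sure to state that explicitly rather than gloss over it.
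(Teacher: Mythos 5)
Your proof is correct and follows essentially the same route as the paper: verify (LMC1)--(LMC3) directly using that $f^{\leftarrow}_L$ sends $\chi_\emptyset,\chi_Y$ to $\chi_\emptyset,\chi_X$, commutes with arbitrary meets and joins, and preserves chains. One minor quibble: surjectivity is in fact not needed for $f^{\leftarrow}_L(\chi_Y)=\chi_X$ (this holds for any $f$, since $\chi_Y\circ f\equiv\top_L$), nor anywhere else in this verification; it only becomes relevant in the subsequent results comparing $\mathcal{C}_{/f}$ with other convexities on $Y$.
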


\begin{proof}  {\bf (LMC1)} holds from the following equalities:\\
$\mathcal{C}_{/f}(\chi_{\emptyset})
=\mathcal{C}(f^{\leftarrow}_L(\chi_{\emptyset}))=\mathcal{C}(\chi_{\emptyset})=\top_M$
and
$\mathcal{C}_{/f}(\chi_{Y})
=\mathcal{C}(f^{\leftarrow}_L(\chi_{Y}))
=\mathcal{C}(\chi_{X})=\top_M$.

 {\bf (LMC2)} can be shown from the following fact:
 for any nonempty set $\{B_i: i\in \Omega\}\subseteq L^Y$,
$$\begin{array}{lllllll}
\mathcal{C}_{/f}\left(\bigwedge\limits_{i\in \Omega}B_i\right)
&=& \mathcal{C}(f^{\leftarrow}_L(\bigwedge\limits_{i\in \Omega}B_i))
&=& \mathcal{C}(\bigwedge\limits_{i\in \Omega}f^{\leftarrow}_L(B_i))\\
&\ge & \bigwedge\limits_{i\in \Omega}\mathcal{C}(f^{\leftarrow}_L(B_i))
&=&  \bigwedge\limits_{i\in \Omega}\mathcal{C}_{/f}(B_i).
\end{array}$$

{\bf (LMC3)}  If $\{B_i:i\in \Omega\}\subseteq L^Y$  is nonempty and totally ordered by inclusion,  then
$$\begin{array}{llllll}
\mathcal{C}_{/f}\left(\bigvee\limits_{i\in \Omega}B_i\right)
&=& \mathcal{C}(f^{\leftarrow}_L(\bigvee\limits_{i\in \Omega}B_i))
&=& \mathcal{C}(\bigvee\limits_{i\in \Omega}f^{\leftarrow}_L(B_i))\\
&\ge & \bigwedge\limits_{i\in \Omega}\mathcal{C}(f^{\leftarrow}_L(B_i))
&=&  \bigwedge\limits_{i\in \Omega}\mathcal{C}_{/f}(B_i).
\end{array}$$
\end{proof}

\begin{theorem} Let $(X,\mathcal{C})$ be an $(L,M)$-fuzzy convex structure and $f: X\rightarrow  Y$ a  surjective function. Then  $\mathcal{C}_{/f}$ is the finest convexity on $Y$ such that  $f$ is an $(L,M)$-fuzzy convexity preserving function.
\end{theorem}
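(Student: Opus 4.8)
The plan is to prove the two assertions bundled in the statement: first that $\mathcal{C}_{/f}$ is a legitimate $(L,M)$-fuzzy convexity on $Y$ such that $f$ is $(L,M)$-fuzzy convexity preserving, and second that it is the \emph{finest} such convexity. The first part is already furnished by Theorem~\ref{1122}, so I would simply invoke it: $(Y,\mathcal{C}_{/f})$ is an $(L,M)$-fuzzy convex structure and $f\colon(X,\mathcal{C})\to(Y,\mathcal{C}_{/f})$ is $(L,M)$-fuzzy convexity preserving because $\mathcal{C}_{/f}(B)=\mathcal{C}(f^{\leftarrow}_L(B))$ gives $\mathcal{C}(f^{\leftarrow}_L(B))\ge\mathcal{C}_{/f}(B)$ (in fact with equality) for every $B\in L^Y$. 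The content to be supplied here is therefore just the maximality clause.

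For the maximality, let $\mathcal{D}$ be any $(L,M)$-fuzzy convexity on $Y$ for which $f\colon(X,\mathcal{C})\to(Y,\mathcal{D})$ is $(L,M)$-fuzzy convexity preserving. By definition of convexity preservation this means $\mathcal{C}\bigl(f^{\leftarrow}_L(B)\bigr)\ge\mathcal{D}(B)$ for all $B\in L^Y$. But the left-hand side is exactly $\mathcal{C}_{/f}(B)$ by the defining formula in Theorem~\ref{1122}. Hence $\mathcal{C}_{/f}(B)\ge\mathcal{D}(B)$ for every $B\in L^Y$, i.e. $\mathcal{D}\le\mathcal{C}_{/f}$ in the ordering of $(L,M)$-fuzzy convexities introduced before Theorem~\ref{cderfv}. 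Thus $\mathcal{C}_{/f}$ is finer than every such $\mathcal{D}$, and since it is itself one such convexity, it is the finest. This completes the argument.

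There is essentially no obstacle here: the proof is a one-line unwinding of the definitions once Theorem~\ref{1122} is in hand, because the quotient convexity is \emph{defined} by the very equality that convexity preservation of $f$ demands. The only point requiring a modicum of care is to note that $\mathcal{C}_{/f}$ does belong to the class over which we are optimizing — that is, that $f$ really is convexity preserving into $(Y,\mathcal{C}_{/f})$ — which is the last sentence of Theorem~\ref{1122}; without this observation "finest" would be vacuous. I would phrase the write-up in two short paragraphs: one citing Theorem~\ref{1122} for membership in the class, one deriving $\mathcal{D}\le\mathcal{C}_{/f}$ from the preservation inequality.
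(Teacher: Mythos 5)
Your proposal is correct and follows exactly the argument the paper gives: take any $(L,M)$-fuzzy convexity $\mathcal{D}$ on $Y$ making $f$ convexity preserving, note $\mathcal{C}_{/f}(B)=\mathcal{C}(f^{\leftarrow}_L(B))\ge\mathcal{D}(B)$, and conclude $\mathcal{C}_{/f}\ge\mathcal{D}$. Your additional remark that Theorem~\ref{1122} supplies membership of $\mathcal{C}_{/f}$ in the class being optimized over is a sensible, if implicit in the paper, observation.
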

\begin{proof}
Let $\mathcal{D}$ be  an $(L,M)$-fuzzy convexity on $Y$  such that  $f$ is an $(L,M)$-fuzzy convexity preserving function from $(X,\mathcal{C})$ to $(Y,\mathcal{D})$. Then we have for all $B\in L^{Y}$,  $\mathcal{C}(f^{\leftarrow}_L(B))\geq \mathcal{D}(B)$ and thus
$\mathcal{C}_{/f}(B)=\mathcal{C}(f^{\leftarrow}_L(B))\geq \mathcal{D}(B)$. Therefore $\mathcal{C}_{/f}\geq \mathcal{D}$.
\end{proof}

\begin{definition} Let $(X,\mathcal{C})$ and $(Y,\mathcal{D})$ be $(L,M)$-fuzzy convex structures.  A function $f: X\rightarrow Y$ is called  an $(L,M)$-fuzzy quotient function if  $f$ is surjective  and $\mathcal{D}$ is a quotient $(L,M)$-fuzzy convexity with respect to $f$ and $\mathcal{C}$.
\end{definition}

\begin{theorem}\rm
If $f: (X,\mathcal{C})\rightarrow (Y,\mathcal{D})$ is an $(L,M)$-fuzzy quotient function, then   $g: (Y,\mathcal{D})\rightarrow
(Z, \mathcal{H})$ is an  $(L,M)$-fuzzy convexity preserving function if and only if  $g\circ f: (X,\mathcal{C})\rightarrow  (Z,\mathcal{H})$ is  an $(L,M)$-fuzzy
convexity preserving function.
\end{theorem}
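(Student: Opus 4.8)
The plan is to reduce everything to two already-established facts. First, since $f$ is an $(L,M)$-fuzzy quotient function we have $\mathcal{D}=\mathcal{C}_{/f}$, so by Theorem \ref{1122} the map $f$ is itself an $(L,M)$-fuzzy convexity preserving function from $(X,\mathcal{C})$ to $(Y,\mathcal{D})$. Second, the composition law of Theorem \ref{246}. The only extra ingredient is the elementary functoriality identity $(g\circ f)^{\leftarrow}_L=f^{\leftarrow}_L\circ g^{\leftarrow}_L$, which follows at once from $h^{\leftarrow}_L(B)=B\circ h$.

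For the forward implication I would argue as follows. Assume $g:(Y,\mathcal{D})\to(Z,\mathcal{H})$ is an $(L,M)$-fuzzy convexity preserving function. Because $f:(X,\mathcal{C})\to(Y,\mathcal{D})$ is also an $(L,M)$-fuzzy convexity preserving function (by the remark at the end of Theorem \ref{1122}), Theorem \ref{246} immediately yields that $g\circ f:(X,\mathcal{C})\to(Z,\mathcal{H})$ is an $(L,M)$-fuzzy convexity preserving function.

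For the converse, assume $g\circ f$ is an $(L,M)$-fuzzy convexity preserving function and fix an arbitrary $C\in L^Z$. Using $\mathcal{D}=\mathcal{C}_{/f}$ together with $f^{\leftarrow}_L\bigl(g^{\leftarrow}_L(C)\bigr)=(g\circ f)^{\leftarrow}_L(C)$, one computes
$$\mathcal{D}\bigl(g^{\leftarrow}_L(C)\bigr)=\mathcal{C}\bigl(f^{\leftarrow}_L(g^{\leftarrow}_L(C))\bigr)=\mathcal{C}\bigl((g\circ f)^{\leftarrow}_L(C)\bigr)\ge\mathcal{H}(C),$$
the last inequality being precisely the hypothesis that $g\circ f$ is $(L,M)$-fuzzy convexity preserving. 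Since $C$ was arbitrary, $g$ is an $(L,M)$-fuzzy convexity preserving function.

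No genuine obstacle is anticipated here; the statement is the usual universal property of quotients and the argument is purely formal. The only step deserving an explicit line is the pointwise verification $(g\circ f)^{\leftarrow}_L(C)=C\circ(g\circ f)=(C\circ g)\circ f=f^{\leftarrow}_L\bigl(g^{\leftarrow}_L(C)\bigr)$. I would also remark that surjectivity of $f$, although part of the definition of a quotient function, is not actually used in either direction of this particular equivalence — it enters only through the well-definedness of $\mathcal{C}_{/f}$ guaranteed by Theorem \ref{1122}.
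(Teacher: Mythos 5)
Your proposal is correct and matches the paper's argument in substance: both directions rest on the identity $(g\circ f)^{\leftarrow}_L(C)=f^{\leftarrow}_L\bigl(g^{\leftarrow}_L(C)\bigr)$ together with the quotient relation $\mathcal{D}(B)=\mathcal{C}\bigl(f^{\leftarrow}_L(B)\bigr)$, and your converse is word-for-word the paper's computation. The only cosmetic difference is that you route the forward implication through Theorem \ref{1122} and the composition Theorem \ref{246} instead of writing out the (identical) one-line chain $\mathcal{C}\bigl((g\circ f)^{\leftarrow}_L(A)\bigr)=\mathcal{D}\bigl(g^{\leftarrow}_L(A)\bigr)\ge\mathcal{H}(A)$ directly; your side remark that surjectivity of $f$ is not invoked beyond establishing the quotient structure is also accurate.
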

\begin{proof} Since  $f: (X,\mathcal{C})\rightarrow (Y,\mathcal{D})$ is an $(L,M)$-fuzzy quotient function,  we know that   $f$ is surjective  and   $\forall B\in L^{Y}$,
 $\mathcal{D}(B)=\mathcal{C}(f^{\leftarrow}_L(B))$.

Necessity. Since $g: (Y,\mathcal{D})\rightarrow
(Z, \mathcal{H})$ is an  $(L,M)$-fuzzy convexity preserving function,  $\forall A\in L^{Z}$, $\mathcal{D}(g^{\leftarrow}_L(A))\geq\mathcal{H}(A)$. Thus $\forall A\in L^{Z}$, $\mathcal{C}((g\circ f)^{\leftarrow}_L(A))=
 \mathcal{C}(f^{\leftarrow}_L(g^{\leftarrow}_L(A)))
 =\mathcal{D}(g^{\leftarrow}_L(A))\geq\mathcal{H}(A)$.

Sufficiency.  Since  $g\circ f: (X,\mathcal{C})\rightarrow  (Z,\mathcal{H})$ is  an $(L,M)$-fuzzy
convexity preserving function, then $\forall A\in L^{Z}$,
$\mathcal{D}(g^{\leftarrow}_L(A))
=\mathcal{C}(f^{\leftarrow}_L(g^{\leftarrow}_L(A)))
=\mathcal{C}((g\circ f)^{\leftarrow}_L(A))\geq\mathcal{H}(A)$.
\end{proof}

\begin{definition}  Let $(X,\mathcal{C})$ and $(Y,\mathcal{D})$ be $(L,M)$-fuzzy convex structures.  A function $f: X\rightarrow Y$ is called  an $(L,M)$-fuzzy convex-to-convex  function if
$\mathcal{D}\left(f^{\rightarrow}_L(A)\right)\ge \mathcal{C}(A)$ for all $A\in
L^X$.
\end{definition}

\begin{theorem}\rm
If $f: (X,\mathcal{C})\rightarrow (Y,\mathcal{D})$ is a surjective $(L,M)$-fuzzy convexity preserving function and is an  $(L,M)$-fuzzy convex-to-convex function,
then  $\mathcal{D}$ is a  quotient $(L,M)$-fuzzy convexity. Moreover, $f$ is an $(L,M)$-fuzzy quotient function  with respect to $f$ and $\mathcal{C}$.
\end{theorem}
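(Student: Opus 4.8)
The plan is to show directly that $\mathcal{D}$ coincides with the quotient convexity $\mathcal{C}_{/f}$ introduced in Theorem \ref{1122}, i.e.\ that $\mathcal{D}(B)=\mathcal{C}\bigl(f^{\leftarrow}_L(B)\bigr)$ for every $B\in L^Y$; once this equality is established, the definition of an $(L,M)$-fuzzy quotient function applies verbatim and both assertions follow at once.

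First I would record the two inequalities furnished by the hypotheses. Since $f$ is $(L,M)$-fuzzy convexity preserving, $\mathcal{C}\bigl(f^{\leftarrow}_L(B)\bigr)\ge \mathcal{D}(B)$ for all $B\in L^Y$, which gives one direction. For the reverse direction I would invoke the convex-to-convex property $\mathcal{D}\bigl(f^{\rightarrow}_L(A)\bigr)\ge \mathcal{C}(A)$ with the particular choice $A=f^{\leftarrow}_L(B)$, obtaining $\mathcal{D}\bigl(f^{\rightarrow}_L(f^{\leftarrow}_L(B))\bigr)\ge \mathcal{C}\bigl(f^{\leftarrow}_L(B)\bigr)$.

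The key step is then the identity $f^{\rightarrow}_L\bigl(f^{\leftarrow}_L(B)\bigr)=B$, valid because $f$ is surjective: for each $y\in Y$ one has $f^{\rightarrow}_L(f^{\leftarrow}_L(B))(y)=\bigvee_{f(x)=y}B(f(x))=\bigvee_{f(x)=y}B(y)=B(y)$, the last equality using that the index set $\{x:f(x)=y\}$ is nonempty. Substituting this back yields $\mathcal{D}(B)\ge \mathcal{C}\bigl(f^{\leftarrow}_L(B)\bigr)$, and combining with the first inequality gives $\mathcal{D}(B)=\mathcal{C}\bigl(f^{\leftarrow}_L(B)\bigr)=\mathcal{C}_{/f}(B)$ for all $B\in L^Y$.

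Finally I would conclude: by Theorem \ref{1122}, $\mathcal{C}_{/f}$ is an $(L,M)$-fuzzy convexity on $Y$ and is a quotient $(L,M)$-fuzzy convexity with respect to $f$ and $\mathcal{C}$; since $\mathcal{D}=\mathcal{C}_{/f}$, the same is true of $\mathcal{D}$, and since moreover $f$ is surjective, $f$ is an $(L,M)$-fuzzy quotient function by definition. I do not anticipate any real obstacle here; the only point requiring care is the use of surjectivity to guarantee $f^{\rightarrow}_L\circ f^{\leftarrow}_L=\mathrm{id}_{L^Y}$, which is exactly where that hypothesis is consumed.
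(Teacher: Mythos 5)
Your proposal is correct and follows essentially the same route as the paper's proof: both directions of the inequality come from the two hypotheses, and surjectivity is used exactly to get $f^{\rightarrow}_L(f^{\leftarrow}_L(B))=B$, yielding $\mathcal{D}(B)=\mathcal{C}(f^{\leftarrow}_L(B))=\mathcal{C}_{/f}(B)$. Your explicit pointwise verification of $f^{\rightarrow}_L\circ f^{\leftarrow}_L=\mathrm{id}_{L^Y}$ is a minor elaboration of a step the paper only asserts.
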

\begin{proof}  Since  $f: (X,\mathcal{C})\rightarrow (Y,\mathcal{D})$ is a surjective $(L,M)$-fuzzy convexity preserving function and an  $(L,M)$-fuzzy convex-to-convex function, we have $\forall B\in L^{Y}$, $\mathcal{C}( f^{\leftarrow}_L(B))\geq\mathcal{D}(B)$
 and $\forall A\in L^{X}$,  $\mathcal{D}(f^{\rightarrow}_L(A))\geq\mathcal{C}(A)$. Since $f$ is surjective, we know for all $B\in L^{Y}$, $f^{\rightarrow}_L( f^{\leftarrow}_L(B))=B$.  Hence $$\mathcal{D}(B)=\mathcal{D}(f^{\rightarrow}_L( f^{\leftarrow}_L(B)))\geq
\mathcal{C}( f^{\leftarrow}_L(B))\geq\mathcal{D}(B).$$  So  $\mathcal{C}( f^{\leftarrow}_L(B))=\mathcal{D}(B)$ for each $B\in L^{Y}$ and then  $\mathcal{D}$ is a  quotient $(L,M)$-fuzzy convexity  with respect to $f$ and $\mathcal{C}$.
\end{proof}

By Theorem  $\ref{1122}$, we can obtain the following result.
\begin{theorem}Let $(X,\mathcal{C})$  be an  $(L,M)$-fuzzy convex structure
 and  $R$  be  an
equivalence  relation  defined  on  $X$.  Let  $X/R$  be
the  usual  quotient  set  and  let  $\pi$ be  the  projection
map  from $X$  to $X/R$.  Define $\mathcal{D}: L^{(X/R)}\rightarrow M$ by $$\forall B\in L^{(X/R)}, \
\mathcal{D}(B)=\mathcal{C}(\pi^{\leftarrow}_L(B)).$$
Then  $\mathcal{D}$  is  an $(L,M)$-fuzzy convexity  on  $X/R$  and  $(X/R,  \mathcal{D})$  is  a quotient  $(L,M)$-fuzzy   convex structure of  $(X,\mathcal{C})$.
\end{theorem}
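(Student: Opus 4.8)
The plan is to recognize this statement as an immediate specialization of Theorem~\ref{1122}. First I would note that the canonical projection $\pi: X \to X/R$ is a surjective function, since every $R$-equivalence class is nonempty and equals the image under $\pi$ of any of its members. This surjectivity is the only hypothesis of Theorem~\ref{1122} that needs to be checked.

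Next I would apply Theorem~\ref{1122} with $Y := X/R$ and $f := \pi$. That theorem produces the mapping $\mathcal{C}_{/\pi}: L^{(X/R)} \to M$ given by $\mathcal{C}_{/\pi}(B) = \mathcal{C}(\pi^{\leftarrow}_L(B))$ for all $B \in L^{(X/R)}$, and asserts that $(X/R, \mathcal{C}_{/\pi})$ is an $(L,M)$-fuzzy convex structure. Comparing the two formulas, the mapping $\mathcal{D}$ in the statement is literally $\mathcal{C}_{/\pi}$, so $\mathcal{D}$ is an $(L,M)$-fuzzy convexity on $X/R$.

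Finally, $\mathcal{D} = \mathcal{C}_{/\pi}$ is by construction a quotient $(L,M)$-fuzzy convexity with respect to $\pi$ and $\mathcal{C}$, so by the definition of a quotient $(L,M)$-fuzzy convex structure, $(X/R, \mathcal{D})$ is a quotient $(L,M)$-fuzzy convex structure of $(X, \mathcal{C})$; moreover $\pi$ is an $(L,M)$-fuzzy convexity preserving function from $(X,\mathcal{C})$ onto it, as also recorded in Theorem~\ref{1122}. There is no genuine obstacle here, since the entire content has already been isolated in Theorem~\ref{1122}. If one instead wanted a self-contained argument, one would simply re-run the verification of (LMC1)--(LMC3) for $\mathcal{D}$ using the commutation identities $\pi^{\leftarrow}_L(\chi_\emptyset) = \chi_\emptyset$, $\pi^{\leftarrow}_L(\chi_{X/R}) = \chi_X$, $\pi^{\leftarrow}_L(\bigwedge_{i\in\Omega} B_i) = \bigwedge_{i\in\Omega} \pi^{\leftarrow}_L(B_i)$ and $\pi^{\leftarrow}_L(\bigvee_{i\in\Omega} B_i) = \bigvee_{i\in\Omega} \pi^{\leftarrow}_L(B_i)$ (the latter preserving total ordering by inclusion), together with the corresponding axioms for $\mathcal{C}$.
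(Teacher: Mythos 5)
Your proposal is correct and matches the paper exactly: the paper derives this theorem as an immediate corollary of Theorem~\ref{1122}, noting only that the projection $\pi: X \to X/R$ is surjective. Nothing further is needed.
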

\begin{corollary}[\cite{Rosa1,Rosa2}]  Let $X$  be  any  set  and  $R$  be  an
equivalence  relation  defined  on  $X$.  Let  $X/R$  be
the  usual  quotient  set  and  let  $\pi$ be  the  projection
map  from $X$  to $X/R$.
If   $(X,\mathfrak{C})$  is  an $I$-convex structure,  then  one  can
define  an $I$-convexity $\mathfrak{D}$  on $X/R$  as  follows:
$\mathfrak{D}=\{B\in I^{(X/R)}: \pi^{\leftarrow}_I(B)\in \mathfrak{C}\}$.  Then $\mathfrak{D}$  is  an $I$-convexity  on  $X/R$  and  $(X/R,  \mathfrak{D})$  is  called  the
quotient  $I$-convex structure .
\end{corollary}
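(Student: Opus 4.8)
The plan is to reduce the corollary to the preceding theorem by observing that the two-valued specialization of the quotient $(L,M)$-fuzzy convexity $\mathcal{D}(B)=\mathcal{C}(\pi^{\leftarrow}_L(B))$ coincides exactly with the set-theoretic description $\mathfrak{D}=\{B\in I^{(X/R)}: \pi^{\leftarrow}_I(B)\in\mathfrak{C}\}$. First I would recall that an $I$-convex structure $(X,\mathfrak{C})$ is precisely an $(I,\mathbf{2})$-fuzzy convex structure, so we may regard $\mathfrak{C}$ as a map $\mathcal{C}:I^X\to\mathbf{2}$ with $\mathcal{C}(A)=\top_{\mathbf{2}}$ iff $A\in\mathfrak{C}$. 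Applying the previous theorem (which is the $(L,M)=(I,\mathbf{2})$ and $\pi$ being the canonical projection instance) directly yields that $\mathcal{D}:I^{(X/R)}\to\mathbf{2}$, $\mathcal{D}(B)=\mathcal{C}(\pi^{\leftarrow}_L(B))$, is an $(I,\mathbf{2})$-fuzzy convexity on $X/R$, i.e.\ an $I$-convexity, and that $(X/R,\mathcal{D})$ is a quotient $(L,M)$-fuzzy convex structure of $(X,\mathcal{C})$.

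Next I would translate $\mathcal{D}$ back into crisp language. Since $\mathbf{2}=\{0,1\}$, the value $\mathcal{D}(B)=\mathcal{C}(\pi^{\leftarrow}_I(B))$ equals $\top_{\mathbf{2}}$ if and only if $\pi^{\leftarrow}_I(B)\in\mathfrak{C}$; hence the set of $B$ with $\mathcal{D}(B)=\top_{\mathbf{2}}$ is exactly $\mathfrak{D}=\{B\in I^{(X/R)}:\pi^{\leftarrow}_I(B)\in\mathfrak{C}\}$. Under the identification of an $I$-convexity with its membership map, this says $\mathcal{D}$ \emph{is} $\mathfrak{D}$, so $\mathfrak{D}$ is an $I$-convexity on $X/R$. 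For completeness I would also note that one can verify (LC1)--(LC3) for $\mathfrak{D}$ directly and routinely: $\pi^{\leftarrow}_I$ preserves $\chi_\emptyset$ and $\chi_X$ (using surjectivity of $\pi$), preserves arbitrary meets, and preserves joins of chains (again using that $\pi$ surjective makes $\{\pi^{\leftarrow}_I(B_i)\}$ a chain whenever $\{B_i\}$ is), so closure of $\mathfrak{C}$ under the corresponding operations transfers to $\mathfrak{D}$.

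Finally I would record that $(X/R,\mathfrak{D})$ is called the quotient $I$-convex structure, which is immediate from the theorem's terminology once the identification above is in place, since $\pi$ is a surjection and $\mathfrak{D}$ is by construction the quotient convexity with respect to $\pi$ and $\mathfrak{C}$. I do not anticipate a genuine obstacle here: the corollary is a pure specialization, and the only point requiring a word of care is the bookkeeping that passes between the lattice-valued map $\mathcal{C}:I^X\to\mathbf{2}$ and the crisp family $\mathfrak{C}\subseteq I^X$, together with checking that $\pi^{\leftarrow}_I$ indeed commutes with the three relevant lattice operations on $I^X$ — all of which is standard and already implicit in the proof of the previous theorem.
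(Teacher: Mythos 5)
Your proposal is correct and follows exactly the route the paper intends: the corollary is stated as the $(L,M)=(I,\mathbf{2})$ specialization of the preceding quotient theorem, with the crisp family $\mathfrak{D}$ identified as the top-level set of the two-valued map $\mathcal{D}(B)=\mathcal{C}(\pi^{\leftarrow}_I(B))$. The paper gives no separate proof, so your careful bookkeeping between the membership map and the crisp family is precisely the (omitted) argument.
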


\section{Substructures and   products of  $(L,M)$-fuzzy  convex structures}

In this section,
 we give  substructures and  products  of  $(L,M)$-fuzzy  convex structures and  discuss  some of their fundamental  properties.

\begin{lemma}\label{123qwe}
Let $(X,\mathfrak{C})$ be an  $L$-convex structure and $\emptyset\neq Y \subseteq X$.
For $A\in \mathfrak{C}$, $co(A|Y)|Y=A|Y$.
\end{lemma}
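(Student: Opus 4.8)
The plan is to reduce the identity to the elementary properties of the convex hull operator of $(X,\mathfrak{C})$. Recall that for $B\in L^X$ one sets $co(B)=\bigwedge\{C\in\mathfrak{C}:B\le C\}$, and that $A|Y$ denotes the restriction of $A$ to $Y$, i.e. the $L$-set $A\wedge\chi_Y$. From (LC1)--(LC2) the family $\{C\in\mathfrak{C}:B\le C\}$ is nonempty (it contains $\chi_X$) and closed under arbitrary meets, so $co(B)\in\mathfrak{C}$; moreover $B\le co(B)$, the assignment $co$ is monotone, and $co(B)=B$ whenever $B\in\mathfrak{C}$. Note that axiom (LC3) plays no role here.

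First I would establish $co(A|Y)|Y\le A|Y$. Since $A|Y\le A$ and $co$ is monotone, $co(A|Y)\le co(A)$; and $A\in\mathfrak{C}$ gives $co(A)=A$, hence $co(A|Y)\le A$. Meeting both sides with $\chi_Y$ yields $co(A|Y)|Y=co(A|Y)\wedge\chi_Y\le A\wedge\chi_Y=A|Y$. For the reverse inequality, extensivity of $co$ gives $A|Y\le co(A|Y)$, and meeting with $\chi_Y$, together with the idempotence $(A\wedge\chi_Y)\wedge\chi_Y=A\wedge\chi_Y$, gives $A|Y\le co(A|Y)|Y$.

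Combining the two inequalities yields $co(A|Y)|Y=A|Y$, as required. I do not anticipate any real obstacle: the argument is a short manipulation of $\wedge$ and the hull operator, and the only facts that must be in place beforehand are the well-definedness of $co$ together with its extensivity, monotonicity and idempotence on $\mathfrak{C}$, all of which follow from (LC1) and (LC2) alone.
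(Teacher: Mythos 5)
Your proof is correct and follows essentially the same route as the paper's: one inclusion from extensivity of $co$ applied to $A|Y$ and then restricting to $Y$, the other from monotonicity together with $co(A)=A$ for $A\in\mathfrak{C}$. The only difference is that you also spell out why $co$ is well defined and has these properties from (LC1)--(LC2), which the paper leaves implicit.
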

\begin{proof}On the one hand, it is obvious that  $A|Y \subseteq co(A|Y)$. Then $A|Y=(A|Y)|Y \subseteq co(A|Y)|Y$. On the other hand, $A|Y\subseteq A$. Hence $co(A|Y)\subseteq co(A)=A$ and  then $co(A|Y)|Y\subseteq co(A)|Y=A|Y$. Therefore, $co(A|Y)|Y=A|Y$.
\end{proof}

\begin{theorem}  Let $(X,\mathcal{C})$ be an $(L,M)$-fuzzy convex structure, $\emptyset \neq Y\subseteq X$.  Then $(Y,\mathcal{C}|Y)$ is an $(L,M)$-fuzzy convex structure on $Y$, where  $\forall A\in L^{Y}$,  $(\mathcal{C}|Y)(A)=\bigvee\{\mathcal{C}(B):B\in L^{X},\  B|Y =A \}$.  We call $(Y,\mathcal{C}|Y)$  an
$(L,M)$-fuzzy substructure of $(X,\mathcal{C})$.
\end{theorem}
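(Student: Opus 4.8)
The plan is to verify the three axioms (LMC1)–(LMC3) directly for the mapping $\mathcal{C}|Y : L^{Y}\rightarrow M$, mirroring the structure of the proof of the quotient theorem (Theorem~\ref{1122}) but using suprema of fibres rather than a single preimage. First I would establish (LMC1). For $\chi_\emptyset$ on $Y$, the $L$-sets $B\in L^X$ with $B|Y=\chi_\emptyset$ include $\chi_\emptyset$ on $X$, so $(\mathcal{C}|Y)(\chi_\emptyset)\ge\mathcal{C}(\chi_\emptyset)=\top_M$; similarly $\chi_X|Y=\chi_Y$ gives $(\mathcal{C}|Y)(\chi_Y)=\top_M$. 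Here one should note that restriction $B|Y$ means $B\circ\iota$ where $\iota:Y\hookrightarrow X$ is the inclusion, i.e. $B|Y=\iota^{\leftarrow}_L(B)$, so that the restriction operator commutes with arbitrary meets and joins of $L$-sets — this is the key structural fact used repeatedly.

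For (LMC2), given a nonempty family $\{A_i:i\in\Omega\}\subseteq L^Y$, I would argue exactly as in Theorem~\ref{1122} for the surjective-image map: fix $a\in M$ with $\bigwedge_{i\in\Omega}(\mathcal{C}|Y)(A_i)\succ a$ (one uses the $\prec$-relation since $M$ is only completely distributive and suprema are involved). Then for each $i$ the supremum $\bigvee\{\mathcal{C}(B):B|Y=A_i\}\succ a$, so there is $B_i\in L^X$ with $B_i|Y=A_i$ and $\mathcal{C}(B_i)\ge a$. Since $(\bigwedge_{i}B_i)|Y=\bigwedge_i(B_i|Y)=\bigwedge_i A_i$ and $\mathcal{C}(\bigwedge_i B_i)\ge\bigwedge_i\mathcal{C}(B_i)\ge a$ by (LMC2) for $\mathcal{C}$, we get $(\mathcal{C}|Y)(\bigwedge_i A_i)\ge\mathcal{C}(\bigwedge_i B_i)\ge a$. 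As this holds for every such $a$ and $b=\bigvee\beta(b)$ in $M$, we conclude $(\mathcal{C}|Y)(\bigwedge_i A_i)\ge\bigwedge_i(\mathcal{C}|Y)(A_i)$. The argument for (LMC3) is identical, with the extra observation that if $\{A_i\}$ is totally ordered by inclusion then the chosen $\{B_i\}$ need not be — so one must instead choose, for each $i$, a witness and then replace $B_i$ by $\bigwedge\{B_j : A_j\supseteq A_i\}$ or rather take joins carefully; concretely, since the $A_i$ form a chain, one can show that $\bigvee_i B_i$ restricts to $\bigvee_i A_i$ and apply (LMC3) for $\mathcal{C}$ to the chain obtained by closing $\{B_i\}$ upward, keeping $\mathcal{C}$-value $\ge a$.

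The main obstacle is precisely this chain-compatibility issue in (LMC3): unlike the quotient case where a single canonical preimage $f^{\leftarrow}_L(B)$ is used and a chain in $Y$ pulls back to a chain in $X$, here the witnesses $B_i$ are chosen non-canonically and may fail to be totally ordered. I expect the clean fix is to use the canonical extension: for $A\in L^Y$ define $B^A\in L^X$ by $B^A(x)=A(x)$ if $x\in Y$ and $B^A(x)=\top_L$ if $x\notin Y$; then $B^A|Y=A$, the assignment $A\mapsto B^A$ is inclusion-preserving (so chains go to chains and meets to meets), and one shows $(\mathcal{C}|Y)(A)$ can be computed using these canonical extensions together with the observation that any $B$ with $B|Y=A$ satisfies $B\le B^A$, hence... — actually the supremum in the definition is genuinely over all such $B$, so the right move is: pick witnesses $B_i\ge a$, then note $B_i\le B^{A_i}$ is false in general, so instead one observes $B_i|Y=A_i$ implies $B_i\wedge B^{A_i}$ also restricts to $A_i$ and lies below $B^{A_i}$; replacing $B_i$ by $B_i\wedge\chi_Y\vee(\text{stuff})$ is delicate. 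The safest route, and the one I would ultimately write, is to reduce to the cut-set characterization: by Theorem~\ref{23456} it suffices to show $(\mathcal{C}|Y)_{[a]}$ is an $L$-convexity for each $a\in M\setminus\{\bot_M\}$, and one checks $(\mathcal{C}|Y)_{[a]}=\{A\in L^Y : A=B|Y \text{ for some } B\in\mathcal{C}_{[a]}\}$ is exactly the standard $L$-subspace convexity, whose axioms follow from Lemma~\ref{123qwe} and the fact that restriction commutes with $\bigwedge$ and with directed $\bigvee$; that circumvents the $\prec$-chasing entirely and isolates the chain subtlety into the already-known crisp $L$-convex substructure argument.
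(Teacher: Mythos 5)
Your handling of (LMC1) and (LMC2) is correct and essentially matches the paper: the paper phrases (LMC2) through the complete-distributivity identity $\bigwedge_{i}\bigvee H_{i}=\bigvee_{f\in\Pi_{i}H_{i}}\bigwedge_{i}\mathcal{C}(f(i))$ rather than through $\prec$-witness extraction, but the content --- take a meet of witnesses $B_{i}$ and observe $\left(\bigwedge_{i}B_{i}\right)|Y=\bigwedge_{i}A_{i}$ --- is the same. You also correctly diagnose the real difficulty in (LMC3): witnesses $B_{i}$ for a chain $\{A_{i}\}$ need not themselves form a chain, so (LMC3) for $\mathcal{C}$ cannot be applied to $\{B_{i}\}$. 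But you never actually close this gap. Your first two fixes (canonical extension by $\top_{L}$; modifying $B_{i}$ by meets and joins with $\chi_{Y}$) are floated and abandoned, and your ``safest route'' has two problems. First, the identification $(\mathcal{C}|Y)_{[a]}=\{B|Y:B\in\mathcal{C}_{[a]}\}$ is false in general: $\bigvee\{\mathcal{C}(B):B|Y=A\}\geq a$ does not yield a single witness with $\mathcal{C}(B)\geq a$ unless one passes to $b\in\beta(a)$ (this can be repaired by showing $(\mathcal{C}|Y)_{[a]}=\bigcap_{b\in\beta(a)}\{B|Y:B\in\mathcal{C}_{[b]}\}$ using Theorem~\ref{12q}, but you do not do this). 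Second, and more importantly, even after descending to the crisp level you still owe the chain axiom for the subspace convexity $\{B|Y:B\in\mathfrak{C}\}$, which is exactly the same difficulty; declaring it ``already-known'' defers the idea rather than supplying it.

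The missing idea --- and the entire purpose of Lemma~\ref{123qwe} in the paper --- is the hull-operator replacement. Having extracted witnesses $B_{i}\in\mathcal{C}_{[a]}$ with $B_{i}|Y=A_{i}$, the paper discards them and works instead with $co_{a}(A_{i})$, the hulls of the $A_{i}$ (viewed in $L^{X}$) in the crisp $L$-convex structure $(X,\mathcal{C}_{[a]})$ furnished by the cut-set characterization (Theorem~\ref{23456}). Hull operators are monotone and $\{A_{i}\}$ is a chain, so $\{co_{a}(A_{i})\}$ is a chain in $\mathcal{C}_{[a]}$ and hence $\bigvee_{i}co_{a}(A_{i})\in\mathcal{C}_{[a]}$; Lemma~\ref{123qwe} then gives $co_{a}(A_{i})|Y=co_{a}(B_{i}|Y)|Y=B_{i}|Y=A_{i}$, so this join restricts to $\bigvee_{i}A_{i}$ and is the required witness of value $\geq a$. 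Without some such device producing order-preserving, canonical witnesses, your argument for (LMC3) does not go through.
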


\begin{proof}
(1)Clearly, $(\mathcal{C}|Y)(\chi_{\emptyset})=(\mathcal{C}|Y)(\chi_{X})=\top_M$.

(2) For any nonempty set $\{A_{i} : i\in \Omega \}\subseteq L^{Y}$, we have
$$\begin{array}{lll}
&&\bigwedge\limits_{i\in \Omega}(\mathcal{C}|Y)(A_{i})\\
&=& \bigwedge\limits_{i\in \Omega}\bigvee\{\mathcal{C}(B):B\in L^{X},\  B|Y =A_{i}\}\\
&=& \bigvee\limits_{f  \in \Pi_{i\in \Omega}H_{i}}\bigwedge\limits_{i\in \Omega}\mathcal{C}(f(i))\\
&\leq& \bigvee\limits_{f  \in \Pi_{i\in \Omega}H_{i}}\mathcal{C}\left(\bigwedge\limits_{i\in \Omega}f(i)\right),
\end{array}$$
where $H_{i}=\{B: B\in L^{X},\  B|Y =A_{i} \}$ $(i\in \Omega)$. Since
$\left(\bigwedge\limits_{i\in \Omega}f(i)\right)|Y=\bigwedge\limits_{i\in \Omega}(f(i)|Y)=\bigwedge\limits_{i\in \Omega}A_{i}$, we have $(\mathcal{C}|Y)  \left( \bigwedge\limits_{i\in \Omega}A_{i} \right)  \geq \bigwedge\limits_{i\in \Omega}(\mathcal{C}|Y)(A_{i})$.

(3) For any $\{A_{i} : i\in \Omega \}\subseteq L^{Y}$, which  is nonempty and totally ordered by inclusion, we have
$$\begin{array}{lll}
\bigwedge\limits_{i\in \Omega}(\mathcal{C}|Y)(A_{i})
&=& \bigwedge\limits_{i\in \Omega}\bigvee\{\mathcal{C}(B):B\in L^{X},\  B|Y =A_{i}\}\\
&=& \bigvee\limits_{f  \in \Pi_{i\in \Omega}\mu_{i}}\bigwedge\limits_{i\in \Omega}\mathcal{C}(f(i))\\
&\leq& \bigvee\limits_{f  \in \Pi_{i\in \Omega}\mu_{i}}\mathcal{C}\left(\bigvee\limits_{i\in \Omega}f(i)\right),
\end{array}$$
where $\mu_{i}=\{B: B\in L^{X},\  B|Y =A_{i} \}$ $(i\in \Omega)$. Since
$\left(\bigvee\limits_{t\in T}f(i)\right)|Y=\bigvee\limits_{i\in \Omega}(f(i)|Y)=\bigvee\limits_{i\in \Omega}A_{i}$, we have $(\mathcal{C}|Y)  \left( \bigvee\limits_{i\in \Omega}A_{i} \right)  \geq \bigwedge\limits_{i\in \Omega}(\mathcal{C}|Y)(A_{i})$.

(3) For any set $\{A_{i} : i\in \Omega \}\subseteq L^{Y}\subseteq L^{X}$, which  is nonempty and totally ordered by inclusion,  let $a$ be any element in $M\backslash\{\bot\}$ with the property of  $\bigwedge\limits_{i\in \Omega}(\mathcal{C}|Y)(A_{i})
 \succ a$, that is, $\bigwedge\limits_{i\in \Omega}\bigvee\{\mathcal{C}(B):B\in L^{X},\  B| Y =A_{i}\}\succ a$.
Then for  each $i\in \Omega$,  there exists $B_i\in L^{X}$  such that
$B_i| Y=A_i$ and $\mathcal{C}(B_i)\geq a$, i.e., $B_i\in \mathcal{C}_{[a]}$.
By Theorem $\ref{cdax2}$,  for each $a \in M\backslash\{\bot\}$,  $(X, \mathcal{C}_{[a]})$ is a convex structure.
 Let $co_{a}$ denote the  hull operator  of $(X, \mathcal{C}_{[a]})$ for each $a \in M\backslash\{\bot\}$.  Then $co_{a}(A_{i})\in \mathcal{C}_{[a]}$ for all $i\in \Omega$.
 Since $\{A_{i} : i\in \Omega \}\subseteq L^{Y}$  is nonempty and totally ordered by inclusion,  $\{ co_{a}(A_{i}) : i\in \Omega \}$  is nonempty and totally ordered by inclusion. Hence, $\bigvee\limits_{i\in \Omega}co_{a}(A_{i})\in \mathcal{C}_{[a]}$, that is,  $\mathcal{C}(\bigvee\limits_{i\in \Omega}co_{a}(A_{i}))\geq a$.  By Lemma $\ref{123qwe}$,
$$\begin{array}{llllll}
(\bigvee\limits_{i\in \Omega}co_{a}(A_{i}))| Y
&=&(\bigvee\limits_{i\in \Omega}co_{a}(B_i| Y))| Y\\
&=&\bigvee\limits_{i\in \Omega}co_{a}(B_i| Y)| Y\\
&=&\bigvee\limits_{i\in \Omega}(B_i| Y)\\
&=&\bigvee\limits_{i\in \Omega}A_i.
\end{array}$$
  So we have $(\mathcal{C}|Y)  \left(\bigvee\limits_{i\in \Omega}A_{i} \right)\geq a$. This implies $(\mathcal{C}|Y)  \left(\bigvee\limits_{i\in \Omega}A_{i} \right)  \geq \bigwedge\limits_{i\in \Omega}(\mathcal{C}|Y)(A_{i})$.
\end{proof}

\begin{corollary} [\cite{Rosa1,Rosa2}]  Let $(X,\mathfrak{C})$ be an  $I$-convex structure,  $\emptyset \neq Y\subseteq X$.  Then  an $I$-convexity  $\mathfrak{C}|Y$  on $Y$  is  given  by  the  fuzzy  sets  of  the  form   $\{  B|Y:   B\in \mathfrak{C}\}$.   The  pair  $(Y, \mathfrak{C}|Y)$  is  an  $I$-convex  substructure of $(X,\mathfrak{C})$.
\end{corollary}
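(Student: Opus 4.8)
The plan is to derive this corollary as the special case $L=I$, $M=\mathbf{2}$ of the preceding theorem. First I would recall, as noted right after Definition \ref{123qwas}, that an $I$-convex structure is exactly an $(I,\mathbf{2})$-fuzzy convex structure: the $I$-convexity $\mathfrak{C}$ corresponds to its characteristic mapping $\mathcal{C}=\chi_{\mathfrak{C}}\colon I^{X}\rightarrow\mathbf{2}$ (equivalently $\mathfrak{C}=\mathcal{C}_{[\top_{\mathbf{2}}]}$), and for this $\mathcal{C}$ the axioms (LMC1)--(LMC3) are literally the axioms (LC1)--(LC3) for $\mathfrak{C}$. So $(X,\mathfrak{C})$ becomes an $(I,\mathbf{2})$-fuzzy convex structure $(X,\mathcal{C})$.

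Next I would apply the preceding theorem to $(X,\mathcal{C})$ and $\emptyset\neq Y\subseteq X$, obtaining that $(Y,\mathcal{C}|Y)$ is an $(I,\mathbf{2})$-fuzzy convex structure, where $(\mathcal{C}|Y)(A)=\bigvee\{\mathcal{C}(B):B\in I^{X},\ B|Y=A\}$ for each $A\in I^{Y}$. The one computational point to check is the behaviour of this supremum: since it is taken in $\mathbf{2}$, it equals $\top_{\mathbf{2}}$ if and only if some $B\in I^{X}$ satisfies $B|Y=A$ and $\mathcal{C}(B)=\top_{\mathbf{2}}$, i.e. if and only if $A=B|Y$ for some $B\in\mathfrak{C}$. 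Hence $\mathcal{C}|Y=\chi_{\mathfrak{C}|Y}$, where $\mathfrak{C}|Y=\{B|Y:B\in\mathfrak{C}\}$.

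Finally I would translate this back: the statement that $\mathcal{C}|Y$ is an $(I,\mathbf{2})$-fuzzy convexity on $Y$ is precisely the statement that $\mathfrak{C}|Y=\{B|Y:B\in\mathfrak{C}\}$ is an $I$-convexity on $Y$, so $(Y,\mathfrak{C}|Y)$ is an $I$-convex structure; and since it arises as the $(L,M)$-fuzzy substructure of $(X,\mathcal{C})$ in the sense of the theorem, it is by definition an $I$-convex substructure of $(X,\mathfrak{C})$. I do not expect any real obstacle here, since the argument is essentially bookkeeping; the only place needing a moment's care is the collapse of the supremum in $\mathbf{2}$ to an existential quantifier, which is exactly what turns the lattice-valued formula for $\mathcal{C}|Y$ into the set-theoretic description $\{B|Y:B\in\mathfrak{C}\}$.
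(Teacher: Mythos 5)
Your proposal is correct and is exactly the route the paper intends: the corollary is stated as the specialization $L=I$, $M=\mathbf{2}$ of the preceding substructure theorem, with the supremum in $\mathbf{2}$ collapsing to the existential description $\{B|Y : B\in\mathfrak{C}\}$, just as you observe. No gaps.
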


By Theorem $\ref{cderfv}$, we can give the following definition:
\begin{definition}\label{ac}
 Let $\varphi: L^{X}\rightarrow M$ be a mapping.  The $(L,M)$-fuzzy convex structure  $(X,\mathcal{C})$ generated by $\varphi$ is given by
  $$\forall A\in  L^{X},\    \mathcal{C}(A)=\bigwedge \{\mathcal{D}(A): \varphi\leq  \mathcal{D}\in \mathfrak{\mathfrak{H}}  \},$$ where  $\mathfrak{\mathfrak{H}}$ denotes all the $(L,M)$-fuzzy convexities on $X$.  Then  $\varphi$ is called a subbase of   the $(L,M)$-fuzzy convexity  $\mathcal{C}$.  Alternatively, we say that  $\varphi$ generates the convexity $\mathcal{C}$.
\end{definition}

Based on  Definition $\ref{ac}$, we can define the product of  $(L,M)$-fuzzy  convex structures as follows:

\begin{definition}
Let $\{(X_{t} , \mathcal{C}_{t} )\}_{t\in T}$  be a family of $(L,M)$-fuzzy convex structures. Let $X$ be the product of the sets of $X_{t}$  for $t \in T$,  and let $\pi_{t}: X\rightarrow X_{t}$ denote the projection for each $t \in T$. Define a maping $\varphi:  L^{X}\rightarrow M$ by
$\varphi(A)=
\bigvee\limits_{t\in T}\bigvee\limits_{(\pi_{t})^{\leftarrow}_L(B)=A}\mathcal{C}_{t}(B)$ for each $A\in L^{X}$.
Then the product convexity  $\mathcal{C}$ of $X$  is the one generated by the subbase $\varphi$.
The resulting $(L,M)$-fuzzy convex structure $(X, \mathcal{C})$ is called the  product  of $\{(X_{t} , \mathcal{C}_{t} )\}_{t\in T}$  and is donated by $\prod\limits_{t\in T}(X_{t} , \mathcal{C}_{t})$.
\end{definition}

When $L=[0,1]$ and $M=\mathbf{2}$,  we can obtain the following definition.

\begin{definition}[\cite{Rosa1,Rosa2}]
Let $\{(X_{t} , \mathfrak{C}_{t} )\}_{t\in T}$  be a family of $I$-convex structures. Let $X$ be the product of the sets of $X_{t}$  for $t \in T$,  and let $\pi_{t}: X\rightarrow X_{t}$ denote the projection for each $t \in T$.
Then  X  can  be  equipped  with
the   $I$-convexity $\mathfrak{C}$  generated  by  the  convex
fuzzy sets of the  form  $\{(\pi_{t})^{\leftarrow}_I(B): B\in \mathfrak{C}_{t}, t\in T\}$.
Then  $\mathfrak{C}$ is called  the  product $I$-convexity  for
$X$  and  $(X, \mathfrak{C})$ is  called  the  product  $I$-convex structure.
\end{definition}

\begin{theorem}
Let $(X , \mathcal{C})$  be the product  of  $\{(X_{t} , \mathcal{C}_{t} )\}_{t\in T}$. Then $\forall t \in T$,   $\pi_{t}: X\rightarrow X_{t}$ is an
$(L,M)$-fuzzy convexity preserving function.  Moreover, $\mathcal{C}$ is the coarsest $(L,M)$-fuzzy convex structure  such that  $\{\pi_{t}: t\in T \}$  are $(L,M)$-fuzzy convexity preserving functions.
\end{theorem}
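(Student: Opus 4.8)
The plan is to prove the two assertions in turn. First, for each fixed $t\in T$, I would show that $\pi_t:(X,\mathcal{C})\to(X_t,\mathcal{C}_t)$ is an $(L,M)$-fuzzy convexity preserving function, i.e.\ that $\mathcal{C}\big((\pi_t)^{\leftarrow}_L(B)\big)\ge \mathcal{C}_t(B)$ for every $B\in L^{X_t}$. By definition of the subbase $\varphi$, we have $\varphi\big((\pi_t)^{\leftarrow}_L(B)\big)=\bigvee_{s\in T}\bigvee_{(\pi_s)^{\leftarrow}_L(C)=(\pi_t)^{\leftarrow}_L(B)}\mathcal{C}_s(C)\ge \mathcal{C}_t(B)$, simply by taking $s=t$ and $C=B$ as one term of the join. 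Since $\varphi\le\mathcal{C}$ (because $\mathcal{C}$ is generated by $\varphi$: every $(L,M)$-fuzzy convexity $\mathcal{D}\ge\varphi$ satisfies $\mathcal{D}\ge\varphi$, and $\mathcal{C}$ is their infimum, so $\mathcal{C}\ge\varphi$ as well — this uses Definition~\ref{ac}), we conclude $\mathcal{C}\big((\pi_t)^{\leftarrow}_L(B)\big)\ge \varphi\big((\pi_t)^{\leftarrow}_L(B)\big)\ge \mathcal{C}_t(B)$, as required.

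Second, I would establish the minimality (coarsest) claim. Suppose $\mathcal{D}$ is any $(L,M)$-fuzzy convexity on $X$ for which every $\pi_t:(X,\mathcal{D})\to(X_t,\mathcal{C}_t)$ is an $(L,M)$-fuzzy convexity preserving function; I must show $\mathcal{C}\le\mathcal{D}$. The convexity-preservation hypothesis says $\mathcal{D}\big((\pi_t)^{\leftarrow}_L(B)\big)\ge\mathcal{C}_t(B)$ for all $t\in T$ and all $B\in L^{X_t}$. I claim this forces $\varphi\le\mathcal{D}$: indeed, for any $A\in L^X$,
\[
\varphi(A)=\bigvee_{t\in T}\bigvee_{(\pi_t)^{\leftarrow}_L(B)=A}\mathcal{C}_t(B)
\le\bigvee_{t\in T}\bigvee_{(\pi_t)^{\leftarrow}_L(B)=A}\mathcal{D}\big((\pi_t)^{\leftarrow}_L(B)\big)
=\bigvee_{t\in T}\bigvee_{(\pi_t)^{\leftarrow}_L(B)=A}\mathcal{D}(A)\le\mathcal{D}(A),
\]
where the last step uses that each inner term equals $\mathcal{D}(A)$ (or the join is empty, in which case $\varphi(A)=\bot_M\le\mathcal{D}(A)$ trivially). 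Hence $\mathcal{D}$ is one of the members of $\mathfrak{H}$ in Definition~\ref{ac} that dominate $\varphi$, and therefore by the definition of $\mathcal{C}$ as the infimum of all such convexities, $\mathcal{C}\le\mathcal{D}$. This completes the proof.

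The routine but essential ingredient throughout is the behaviour of $\mathcal{C}$ as the convexity \emph{generated} by $\varphi$: one must check that the infimum $\mathcal{C}=\bigwedge\{\mathcal{D}:\varphi\le\mathcal{D}\in\mathfrak{H}\}$ is itself an $(L,M)$-fuzzy convexity (which follows from Theorem~\ref{cderfv}) and that it is the smallest such convexity above $\varphi$ — both facts are immediate from Definition~\ref{ac} and are used in each half of the argument. I do not anticipate a genuine obstacle here; the only point requiring mild care is bookkeeping with the double joins defining $\varphi$ and the convention $\bigvee\emptyset=\bot_M$, ensuring that the inequalities $\varphi\le\mathcal{C}$ and $\varphi\le\mathcal{D}$ hold even when, for a given $A$, no $t$ and $B$ satisfy $(\pi_t)^{\leftarrow}_L(B)=A$.
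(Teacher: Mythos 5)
Your proof is correct and follows essentially the same route as the paper: both parts rest on the observation that $\mathcal{C}_t(B)$ is one term of the join defining $\varphi\big((\pi_t)^{\leftarrow}_L(B)\big)$, combined with the fact that the generated convexity $\mathcal{C}$ is the least $(L,M)$-fuzzy convexity dominating the subbase $\varphi$. Your extra remarks about the empty-join convention and about why the infimum in Definition~\ref{ac} is itself a convexity are sensible bookkeeping that the paper leaves implicit.
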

\begin{proof}  Let $t_{0}\in T$.  $\forall B\in L^{X_{t_{0}}}$,  by


$$\begin{array}{llllll}
\mathcal{C}_{t_{0}}(B)
&\leq&  \bigvee\limits_{t\in T}\bigvee\limits_{(\pi_{t})^{\leftarrow}_L(B)
=(\pi_{t_{0}})^{\leftarrow}_L(B)}\mathcal{C}_{t}(B)\\
&=& \varphi((\pi_{t_{0}})^{\leftarrow}_L(B))\\
&\leq& \mathcal{C}((\pi_{t_{0}})^{\leftarrow}_L(B)),
\end{array}$$
it implies that $\pi_{t_{0}}: X\rightarrow X_{t_{0}}$ is an
$(L,M)$-fuzzy convexity preserving function.  By the arbitrariness of  $t_{0}$,  we know   $\forall t \in T$,  $\pi_{t}: X\rightarrow X_{t}$ is an
$(L,M)$-fuzzy convexity preserving function.  If there is an $(L,M)$-fuzzy convex structure $\mathcal{D}$ on $X$ such that $\forall t\in T$, $\pi_{t} : X\rightarrow  X_{t}$  is an  $(L,M)$-fuzzy convexity preserving function,
then we need to prove $\mathcal{D}\geq \mathcal{C}$.  $\forall B\in L^{X}$, $t\in T$, if $(\pi_{t})^{\leftarrow}_L(G)=B$, $\mathcal{D}(B)= \mathcal{D}((\pi_{t})^{\leftarrow}_L(G))\geq  \mathcal{C}_{t}(G)$.
Note that  $\varphi(B)=
\bigvee\limits_{t\in T}\bigvee\limits_{(\pi_{t})^{\leftarrow}_L(G)=B}\mathcal{C}_{t}(G)$. We have $\mathcal{D}(B)\geq \varphi(B)$ for all $B\in L^{X}$. Hence  $\mathcal{D}\geq \mathcal{C}$.
\end{proof}

\section{Relation between  $\mathbf{MYCS}$ and   $\mathbf{LMCS}$}

%

In this section,  we discuss the   relation   between   $(L,M)$-fuzzy convex structures and $M$-fuzzifying convex structures  from a categorical aspect.  $(L,M)$-fuzzy convex structures and their $(L,M)$-fuzzy convexity preserving functions form a category which is denoted by $\mathbf{LMCS}$ and  $M$-fuzzifying convex structures and their $M$-fuzzifying convexity preserving functions form a category which is denoted by $\mathbf{MYCS}$.  Moreover,   we  create a functor $\omega$ from $\mathbf{MYCS}$
 to $\mathbf{LMCS}$ and show that there exists an adjunction between  $\mathbf{MYCS}$ and   $\mathbf{LMCS}$.    We always suppose  that  $\beta(a\wedge b) = \beta(a)\cap\beta(b)$ for any $a, b \in L$ in this section.

\begin{lemma} \label{qwas} If $\beta(a\wedge b) = \beta(a)\cap\beta(b)$ for any $a, b \in L$, then  for $A\in L^X$,  $\{A_{[c]}: b\in \beta(c)\}$  is  up-directed.
\end{lemma}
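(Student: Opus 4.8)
The plan is straightforward: fix $b\in L$ and show that any two members of the family $\{A_{[c]}:b\in\beta(c)\}$ admit a common upper bound with respect to inclusion that again lies in the family. So suppose $A_{[c_1]}$ and $A_{[c_2]}$ are two such members, i.e.\ $b\in\beta(c_1)$ and $b\in\beta(c_2)$. The natural candidate for a common upper bound is $A_{[c_1\wedge c_2]}$, since the assignment $c\mapsto A_{[c]}$ is order-reversing.

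First I would check that $A_{[c_1\wedge c_2]}$ belongs to the family. From $b\in\beta(c_1)$ and $b\in\beta(c_2)$ we get $b\in\beta(c_1)\cap\beta(c_2)$, and by the standing hypothesis of this section, $\beta(c_1)\cap\beta(c_2)=\beta(c_1\wedge c_2)$; hence $b\in\beta(c_1\wedge c_2)$, so $A_{[c_1\wedge c_2]}$ is indeed one of the sets in $\{A_{[c]}:b\in\beta(c)\}$.

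Next I would verify the two inclusions. Since $c_1\wedge c_2\le c_1$, every $x$ with $c_1\le A(x)$ also satisfies $c_1\wedge c_2\le A(x)$; that is, $A_{[c_1]}\subseteq A_{[c_1\wedge c_2]}$, and symmetrically $A_{[c_2]}\subseteq A_{[c_1\wedge c_2]}$. Thus $A_{[c_1\wedge c_2]}$ is a common upper bound of $A_{[c_1]}$ and $A_{[c_2]}$ lying in the family, which is exactly the up-directedness we want.

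I do not expect any real obstacle; the whole argument is an immediate consequence of the hypothesis $\beta(a\wedge b)=\beta(a)\cap\beta(b)$ together with the elementary fact that $c\mapsto A_{[c]}$ reverses order. The only point worth a remark is that, depending on one's convention, ``up-directed'' may be required to apply to a nonempty family; in the situation where this lemma is later used the index set $\{c:b\in\beta(c)\}$ is nonempty, so this causes no difficulty.
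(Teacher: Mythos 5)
Your proof is correct and follows exactly the paper's own argument: use the hypothesis $\beta(c_1)\cap\beta(c_2)=\beta(c_1\wedge c_2)$ to see that $A_{[c_1\wedge c_2]}$ lies in the family, and note that it contains both $A_{[c_1]}$ and $A_{[c_2]}$ because $c\mapsto A_{[c]}$ is order-reversing. No issues.
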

\begin{proof} Let $A_{[c_{1}]}, A_{[c_{2}]}\in \{A_{[c]}: b\in \beta(c)\}$. Then
$b\in \beta(c_{1})$  and $b\in \beta(c_{2})$. We have $b\in  \beta(c_{1})\bigcap \beta(c_{2})=\beta(c_{1}\wedge c_{2})$.  Hence $A_{[c_{1}\wedge c_{2}]}\in  \{A_{[c]}: b\in \beta(c)\}$. Moreover, $A_{[c_{1}]}, A_{[c_{2}]}\subseteq A_{[c_{1}\wedge c_{2}]}$. Therefore, $\{A_{[c]}: b\in \beta(c)\}$  is  up-directed.
\end{proof}


\begin{theorem} Let $(X,\mathscr{C})$ be an $M$-fuzzifying convex structure. Define a mapping $\omega(\mathscr{C}): L^X\rightarrow M$ by
$$ \forall A\in L^X,\    \omega(\mathscr{C})(A)=\bigwedge\limits_{a\in L}\mathscr{C}(A_{[a]}).$$
Then $\omega(\mathscr{C})$  is  an  $(L,M)$-fuzzy convexity.
\end{theorem}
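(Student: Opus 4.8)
The plan is to verify the three axioms (LMC1)--(LMC3) for $\omega(\mathscr{C})$ directly, using the behaviour of the cut operator $A \mapsto A_{[a]}$ under meets and directed joins of $L$-fuzzy sets, together with the corresponding axioms (MYC1)--(MYC3) for the $M$-fuzzifying convexity $\mathscr{C}$. The identity $\omega(\mathscr{C})(A)=\bigwedge_{a\in L}\mathscr{C}(A_{[a]})$ reduces everything to statements about crisp sets, so the crisp axioms can be applied level by level.

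First I would dispose of (LMC1): for $A=\chi_\emptyset$ we have $A_{[a]}=\emptyset$ for every $a$ except possibly $a=\bot_L$ (where $A_{[\bot_L]}=X$), so $\omega(\mathscr{C})(\chi_\emptyset)=\mathscr{C}(\emptyset)\wedge\mathscr{C}(X)=\top_M$ by (MYC1); similarly $A_{[a]}=X$ for all $a$ when $A=\chi_X$, giving $\omega(\mathscr{C})(\chi_X)=\top_M$. Next, for (LMC2), let $\{A_i : i\in\Omega\}\subseteq L^X$ be nonempty. By Theorem~\ref{12w}(1) we have $\bigl(\bigwedge_{i}A_i\bigr)_{[a]}=\bigcap_{i}(A_i)_{[a]}$ for each $a\in L$, so using (MYC2) at each level,
$$\omega(\mathscr{C})\Bigl(\bigwedge_{i\in\Omega}A_i\Bigr)=\bigwedge_{a\in L}\mathscr{C}\Bigl(\bigcap_{i\in\Omega}(A_i)_{[a]}\Bigr)\ge\bigwedge_{a\in L}\bigwedge_{i\in\Omega}\mathscr{C}\bigl((A_i)_{[a]}\bigr)=\bigwedge_{i\in\Omega}\omega(\mathscr{C})(A_i).$$

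The main obstacle is (LMC3): if $\{A_i:i\in\Omega\}\subseteq L^X$ is nonempty and totally ordered by inclusion, I need to control $\bigl(\bigvee_i A_i\bigr)_{[a]}$. In general $\bigl(\bigvee_i A_i\bigr)_{[a]}$ need not equal $\bigcup_i (A_i)_{[a]}$, because a value $a$ can be reached only in the supremum; this is exactly why the standing hypothesis $\beta(a\wedge b)=\beta(a)\cap\beta(b)$ and Lemma~\ref{qwas} are in force. The strategy is to pass to $\beta$-level sets: by Theorem~\ref{12q}(2), $A_{[a]}=\bigcap_{b\in\beta(a)}A_{(b)}$, and by Theorem~\ref{12q}(4), $A_{(b)}=\bigcup_{b\in\beta(c)}A_{[c]}$. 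Since the $A_i$ form a chain and (using Lemma~\ref{qwas}) the families $\{(A_i)_{[c]} : b\in\beta(c)\}$ are up-directed, one shows $\bigl(\bigvee_i A_i\bigr)_{(b)}=\bigcup_i (A_i)_{(b)}$ and that this union is again a chain of members whose $\mathscr{C}$-values are each $\ge\bigwedge_i\omega(\mathscr{C})(A_i)$; applying (MYC3) gives $\mathscr{C}\bigl((\bigvee_i A_i)_{(b)}\bigr)\ge\bigwedge_i\omega(\mathscr{C})(A_i)$. Then intersecting over $b\in\beta(a)$ — which is again a directed intersection of chain-members, handled by (MYC2) combined with a directedness argument — yields $\mathscr{C}\bigl((\bigvee_i A_i)_{[a]}\bigr)\ge\bigwedge_i\omega(\mathscr{C})(A_i)$ for every $a\in L$. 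Taking $\bigwedge_{a\in L}$ gives (LMC3). An alternative, cleaner route is to invoke Theorem~\ref{23456}: it suffices to show $\omega(\mathscr{C})_{[d]}$ is an $L$-convexity for each $d\in M\setminus\{\bot_M\}$; one checks $A\in\omega(\mathscr{C})_{[d]}$ iff $A_{[a]}\in\mathscr{C}_{[d]}$ for all $a\in L$, and then (LC1)--(LC3) for $\omega(\mathscr{C})_{[d]}$ follow from the fact that $\mathscr{C}_{[d]}$ is a (crisp) convexity by Theorem~\ref{cdax2}, again modulo the chain-join compatibility of cut sets supplied by Lemma~\ref{qwas}. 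I expect the chain-join step to be where the hypothesis $\beta(a\wedge b)=\beta(a)\cap\beta(b)$ is genuinely needed, and I would present that part in full detail.
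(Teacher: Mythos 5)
Your proposal is correct and follows essentially the same route as the paper: (LMC1) and (LMC2) directly from the cut-set identities, and (LMC3) by writing $\bigl(\bigvee_i A_i\bigr)_{[a]}=\bigcap_{b\in\beta(a)}\bigcup_i\bigcup_{b\in\beta(c)}(A_i)_{[c]}$ via Theorems~\ref{12q} and~\ref{12w}, invoking Lemma~\ref{qwas} for up-directedness and working at a fixed level $\mathscr{C}_{[h]}$. Both your sketch and the paper silently use that a crisp convexity, being closed under chain unions, is closed under up-directed unions, so no gap relative to the paper's own argument.
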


\begin{proof}  {\bf (LMC1)} Obviously, $\omega(\mathscr{C})(\chi_X)=\omega(\mathscr{C})(\chi_\emptyset)=\top_{M}$.

{\bf (LMC2)} For any nonempty set  $\{A_i: i\in \Omega\}\subseteq L^X$,   we have
$$\begin{array}{lllllll}
\omega(\mathscr{C})(\bigwedge\limits_{i\in \Omega}A_i)
&=&  \bigwedge\limits_{a\in L}\mathscr{C}((\bigwedge\limits_{i\in \Omega}A_i)_{[a]})
&=&  \bigwedge\limits_{a\in L}\mathscr{C}(\bigwedge\limits_{i\in \Omega}(A_i)_{[a]})\\
&\geq &  \bigwedge\limits_{a\in L}\bigwedge\limits_{i\in \Omega}\mathscr{C}((A_i)_{[a]})
&=&\bigwedge\limits_{i\in \Omega} \bigwedge\limits_{a\in L}\mathscr{C}((A_i)_{[a]})\\
&=& \bigwedge\limits_{i\in \Omega} \omega(\mathscr{C})(A_i).\end{array}$$

{\bf(LMC3)}
For any set  $\{A_i: i\in \Omega\}\subseteq L^X$, which is nonempty and totally ordered by inclusion,   we need to prove that $\omega(\mathscr{C})\left(\bigvee\limits_{i\in \Omega}A_i\right) \geq
\bigwedge\limits_{i\in \Omega}\omega(\mathscr{C})(A_i)$, that is,
$\bigwedge\limits_{a\in L}\mathscr{C}((\bigvee\limits_{i\in \Omega}A_i)_{[a]}) \geq \bigwedge\limits_{i\in \Omega} \bigwedge\limits_{a\in L}\mathscr{C}((A_i)_{[a]})$.  Let $h\in M\backslash\{\bot_M\}$  and $\bigwedge\limits_{i\in \Omega} \bigwedge\limits_{a\in L}\mathscr{C}((A_i)_{[a]})\geq h$. Then we have  for any $i\in \Omega$ and for any $a\in L$, $\mathscr{C}((A_i)_{[a]})\geq h$, i.e., $(A_i)_{[a]}\in  \mathscr{C}_{[h]}$.  Since $(X,\mathscr{C})$ is an $M$-fuzzifying convex structure,  by Theorem $\ref{cdax2}$, for each $h\in M\backslash\{\bot_M\}$, $(X,\mathscr{C}_{[h]})$ is a  convex structure. By Theorems $\ref{12q}$ and  $\ref{12w}$,  we know that
$$(\bigvee\limits_{i\in \Omega}A_i)_{[a]}
 =\bigcap\limits_{b\in \beta(a)}(\bigvee\limits_{i\in \Omega}A_i)_{(b)}
 =\bigcap\limits_{b\in \beta(a)}\bigcup\limits_{i\in \Omega}(A_i)_{(b)}
=\bigcap\limits_{b\in \beta(a)}\bigcup\limits_{i\in \Omega}\bigcup\limits_{b\in \beta(c)}(A_i)_{[c]}.$$
By Lemma $\ref{qwas}$,
for each $b\in  \beta(a)$  and for each $i\in \Omega$,
$\{(A_i)_{[c]}: b\in \beta(c)\} \subseteq \mathscr{C}_{[h]}$ is  up-directed.
Then by Definition  $\ref{qwerd}$,  we have  $\bigcup\limits_{b\in \beta(c)}(A_i)_{[c]}\in \mathscr{C}_{[h]}$.
Let $B_{i}=\bigcup\limits_{b\in \beta(c)}(A_i)_{[c]}$ for each $i\in \Omega$.  Since $\{A_i: i\in \Omega\}$ is totally ordered, we obtain $\{B_i: i\in \Omega\}$ is totally ordered. Then $\bigcup\limits_{i\in \Omega}\bigcup\limits_{b\in \beta(c)}
(A_i)_{[c]}\in \mathscr{C}_{[h]}$.    Therefore  $(\bigvee\limits_{i\in \Omega}A_i)_{[a]}=\bigcap\limits_{b\in  \beta(a)}\bigcup\limits_{i\in \Omega}\bigcup\limits_{b\in \beta(c)}(A_i)_{[c]}\in  \mathscr{C}_{[h]}$. Hence $\omega(\mathscr{C})\left(\bigvee\limits_{i\in \Omega}A_i\right)\geq h$.  By the arbitrariness of $h$, we have $\omega(\mathscr{C})\left(\bigvee\limits_{i\in \Omega}A_i\right) \geq
\bigwedge\limits_{i\in \Omega}\omega(\mathscr{C})(A_i)$.
\end{proof}


\begin{theorem}\rm Let $(X,\mathscr{C})$ and $(Y,\mathscr{D})$ be two $M$-fuzzifying convex structures and
$f: X\rightarrow Y$   a function. Then $f: (X,\mathscr{C})\rightarrow
(Y,\mathscr{D})$ is an  $M$-fuzzifying convexity preserving function if and only if $f:
(X,\omega(\mathscr{C}))\rightarrow (Y,\omega(\mathscr{D}))$ is an  $(L,M)$-fuzzy
convexity preserving function.
\end{theorem}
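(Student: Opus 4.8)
The plan is to reduce both implications to a single level-set identity: for every $B\in L^Y$ and every $a\in L$,
$$(f^{\leftarrow}_L(B))_{[a]}=f^{-1}(B_{[a]}),$$
which is immediate since $x\in (f^{\leftarrow}_L(B))_{[a]}\iff a\le B(f(x))\iff f(x)\in B_{[a]}\iff x\in f^{-1}(B_{[a]})$. (Here $f^{-1}$ of a crisp set is just $f^{\leftarrow}_{\mathbf 2}$, the preimage used in the $M$-fuzzifying setting.) With this identity in hand, both directions become short computations with $\omega(\mathscr{C})(A)=\bigwedge_{a\in L}\mathscr{C}(A_{[a]})$.

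For necessity, I would assume $f:(X,\mathscr{C})\to(Y,\mathscr{D})$ is $M$-fuzzifying convexity preserving, i.e.\ $\mathscr{C}(f^{-1}(D))\ge\mathscr{D}(D)$ for every crisp $D\subseteq Y$. Then for an arbitrary $A\in L^Y$,
$$\omega(\mathscr{C})(f^{\leftarrow}_L(A))=\bigwedge_{a\in L}\mathscr{C}\big((f^{\leftarrow}_L(A))_{[a]}\big)=\bigwedge_{a\in L}\mathscr{C}\big(f^{-1}(A_{[a]})\big)\ge\bigwedge_{a\in L}\mathscr{D}(A_{[a]})=\omega(\mathscr{D})(A),$$
which is exactly the $(L,M)$-fuzzy convexity preserving condition for $f:(X,\omega(\mathscr{C}))\to(Y,\omega(\mathscr{D}))$.

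For sufficiency, I would assume $\omega(\mathscr{C})(f^{\leftarrow}_L(A))\ge\omega(\mathscr{D})(A)$ for all $A\in L^Y$ and feed in characteristic functions. For a crisp $D\subseteq Y$, the level sets of $\chi_D\in L^Y$ are $(\chi_D)_{[\bot_L]}=Y$ and $(\chi_D)_{[a]}=D$ for every $a\in L\setminus\{\bot_L\}$ (in particular for $a=\top_L$, using that $L$ is nontrivial). Hence $\omega(\mathscr{D})(\chi_D)=\mathscr{D}(Y)\wedge\mathscr{D}(D)=\mathscr{D}(D)$, and similarly $\omega(\mathscr{C})(\chi_{f^{-1}(D)})=\mathscr{C}(f^{-1}(D))$. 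Since $f^{\leftarrow}_L(\chi_D)=\chi_{f^{-1}(D)}$, applying the hypothesis to $A=\chi_D$ gives
$$\mathscr{C}(f^{-1}(D))=\omega(\mathscr{C})(f^{\leftarrow}_L(\chi_D))\ge\omega(\mathscr{D})(\chi_D)=\mathscr{D}(D),$$
so $f$ is $M$-fuzzifying convexity preserving.

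All of this is routine; the only point needing a moment's attention is the evaluation of $\omega$ on a characteristic function (the degenerate level set at $a=\bot_L$ must be separated off), which is also where nontriviality of $L$ enters. I do not anticipate any genuine obstacle.
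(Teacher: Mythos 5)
Your proof is correct and follows essentially the same route as the paper: the necessity direction rests on the level-set identity $(f^{\leftarrow}_L(B))_{[a]}=f^{-1}(B_{[a]})$, and the sufficiency direction evaluates $\omega$ on characteristic functions of crisp sets. If anything, your treatment of the sufficiency step is slightly more careful than the paper's, since you explicitly separate off the degenerate level set at $a=\bot_L$ (where $(\chi_D)_{[\bot_L]}=Y$) and use $\mathscr{D}(Y)=\top_M$ to discard it, a point the paper's computation glosses over.
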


\begin{proof}  Necessity. Suppose that $f: (X,\mathscr{C})\rightarrow (Y,\mathscr{D})$
is an $M$-fuzzifying convexity preserving function. Then $\mathscr{C}\left(f^{-1}(A)\right)\ge
\mathscr{D}(A)$ for any $A\in {\bf2}^Y$. In order to prove that $f:
(X,\omega(\mathscr{C}))\rightarrow (Y,\omega(\mathscr{D}))$ is an $(L,M)$-fuzzy
convexity preserving function, we need to prove
$\omega(\mathscr{C})\left(f^{\leftarrow}_L(A)\right)\ge \omega(\mathscr{D})(A)$
for any $A\in L^Y$.
For any $A\in L^Y$ and for any $a\in
L$, we have $f^{\leftarrow}_L(A)_{[a]}=f^{-1}(A_{[a]})$.

In fact, for any $A\in L^Y$, by
$$\begin{array}{llllllll}
\omega(\mathscr{C})\left(f^{\leftarrow}_L(A)\right)
&=&\bigwedge\limits_{a\in L}\mathscr{C}(f^{\leftarrow}_L(A)_{[a]})
&=&\bigwedge\limits_{a\in L}\mathscr{C}(f^{-1}(A_{[a]}))\\
&\geq &\bigwedge\limits_{a\in L}\mathscr{D}(A_{[a]})
&=&\omega(\mathscr{D})(A),
\end{array}$$
we can prove the necessity.

Sufficiency. Suppose that $f: (X,\omega(\mathscr{C}))\rightarrow
(Y,\omega(\mathscr{D}))$ is  an $(L,M)$-fuzzy convexity preserving function. Then
$\omega(\mathscr{C})\left(f_L^{\leftarrow}(A)\right)\ge \omega(\mathscr{D})(A)$
for any $A\in L^Y$. In particular, it follows that
$\omega(\mathscr{C})\left(f_L^{\leftarrow}(A)\right)\ge \omega(\mathscr{D})(A)$
for any $A\in {\bf2}^Y$. In order to prove that $f:
(X,\mathscr{C})\rightarrow (Y,\mathscr{D})$ is an $M$-fuzzifying convexity preserving function, we
need to prove $\mathscr{C}\left(f^{-1}(A)\right)\ge \mathscr{D}(A)$ for any
$A\in {\bf2}^Y$. In fact, for any $A\in {\bf2}^Y$,
we have
$$\begin{array}{llllllll}
\mathscr{C}\left(f^{-1}(A)\right)
&=& \mathscr{C}\left(f^{-1}(A_{[a]})\right)
&=&\mathscr{C}\left(f^{\leftarrow}_L(A)_{[a]}\right)\\
&=& \bigwedge\limits_{a\in L}\mathscr{C}(f^{\leftarrow}_L(A)_{[a]})
&=&\omega(\mathscr{C})\left(f^{\leftarrow}_L(A)\right)\\
&\ge& \omega(\mathscr{D})\left(A\right)
&=& \bigwedge\limits_{a\in L}\mathscr{D}(A_{[a]})\\
&=&\mathscr{D}(A).
\end{array}$$ This shows that $f:
(X,\mathscr{C})\rightarrow (Y,\mathscr{D})$ is an $M$-fuzzifying convexity preserving function.
\end{proof}

\begin{theorem}
Suppose that $(X, \mathcal{C})$ is  an $(L,M)$-fuzzy convex  structure.
We can obtain an $M$-fuzzifying convex structure $\iota(\mathcal{C})$ on $X$ generated by the
subbase
$\varphi_{\mathcal{C}}(U): 2^{X}\rightarrow M$ defined as follows:
$$\forall U\in \mathbf{2}^{X},\
\varphi_{\mathcal{C}}(U)=\bigvee\limits_{a\in L}\bigvee\{ \mathcal{C}(B):
 B\in L^{X}, B_{[a]}=U\}.$$
Then  $\iota \circ \omega = id$  and   $\omega \circ \iota \geq  id$.
\end{theorem}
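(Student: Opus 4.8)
The plan is to treat the two assertions separately. The common mechanism is the behaviour of the generating construction: the convexity generated by a subbase $\varphi$ always dominates $\varphi$, and if $\varphi$ happens itself to be a convexity, then the generated convexity equals $\varphi$ (it is the least element of the family of convexities dominating $\varphi$). This applies in both the $M$-fuzzifying and the $(L,M)$-fuzzy settings.

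For the equality $\iota\circ\omega=\mathrm{id}$, let $(X,\mathscr{C})$ be an $M$-fuzzifying convex structure. I would show that the subbase $\varphi_{\omega(\mathscr{C})}\colon\mathbf{2}^X\to M$ coincides with $\mathscr{C}$; by the remark above, the $M$-fuzzifying convexity $\iota(\omega(\mathscr{C}))$ it generates is then $\mathscr{C}$ itself. The key step is the computation $\omega(\mathscr{C})(\chi_U)=\mathscr{C}(U)$ for every $U\in\mathbf{2}^X$: since $(\chi_U)_{[a]}=X$ for $a=\bot_L$ and $(\chi_U)_{[a]}=U$ for every $a\in L\setminus\{\bot_L\}$, the defining meet $\bigwedge_{a\in L}\mathscr{C}\big((\chi_U)_{[a]}\big)$ collapses to $\mathscr{C}(X)\wedge\mathscr{C}(U)=\mathscr{C}(U)$. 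Feeding $B=\chi_U$, at cut level $\top_L$, into the definition of $\varphi_{\omega(\mathscr{C})}$ then yields $\varphi_{\omega(\mathscr{C})}(U)\ge\mathscr{C}(U)$; for the opposite inequality one observes that any $B\in L^X$ with $B_{[a]}=U$ satisfies $\omega(\mathscr{C})(B)=\bigwedge_{b\in L}\mathscr{C}(B_{[b]})\le\mathscr{C}(B_{[a]})=\mathscr{C}(U)$, so $\varphi_{\omega(\mathscr{C})}(U)\le\mathscr{C}(U)$.

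For the inequality $\omega\circ\iota\ge\mathrm{id}$, let $(X,\mathcal{C})$ be an $(L,M)$-fuzzy convex structure. Here I would chain three inequalities: first $\iota(\mathcal{C})\ge\varphi_{\mathcal{C}}$, since $\iota(\mathcal{C})$ is generated by $\varphi_{\mathcal{C}}$; second, for any $A\in L^X$ and $a\in L$, $\varphi_{\mathcal{C}}(A_{[a]})\ge\mathcal{C}(A)$, obtained by taking $B=A$ at cut level $a$ in the supremum defining $\varphi_{\mathcal{C}}(A_{[a]})$; consequently $\omega(\iota(\mathcal{C}))(A)=\bigwedge_{a\in L}\iota(\mathcal{C})(A_{[a]})\ge\bigwedge_{a\in L}\varphi_{\mathcal{C}}(A_{[a]})\ge\bigwedge_{a\in L}\mathcal{C}(A)=\mathcal{C}(A)$.

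Neither direction presents a serious obstacle; the points needing care are the cut-set bookkeeping for $\chi_U$ in the first part — in particular that the level $a=\bot_L$ contributes only $\mathscr{C}(X)=\top_M$ to the meet and is harmless — and the observation that a subbase which is already a convexity generates precisely itself, which is what upgrades the pointwise identity $\varphi_{\omega(\mathscr{C})}=\mathscr{C}$ to $\iota(\omega(\mathscr{C}))=\mathscr{C}$.
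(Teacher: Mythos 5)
Your proposal is correct and follows essentially the same route as the paper: establish $\varphi_{\omega(\mathscr{C})}=\mathscr{C}$ by the two inequalities (using $\omega(\mathscr{C})(\chi_U)=\mathscr{C}(U)$ and $\omega(\mathscr{C})(B)\le\mathscr{C}(B_{[a]})$) and then invoke the minimality of the generated convexity, and for the second part chain $\omega(\iota(\mathcal{C}))(A)\ge\bigwedge_{a\in L}\varphi_{\mathcal{C}}(A_{[a]})\ge\mathcal{C}(A)$. Your explicit cut-set bookkeeping for $(\chi_U)_{[a]}$ at $a=\bot_L$ versus $a\neq\bot_L$ is a small clarification of a step the paper leaves implicit, but the argument is the same.
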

\begin{proof}
We observe that for every $M$-fuzzifying convex structure $\mathscr{C}$ on $X$ the relation  $\varphi_{\omega(\mathscr{C})}(U) \geq \mathscr{C}(U)$ holds for all $U \in \mathbf{2}^{X}$. In fact, it could be showed by
$ \forall U \in \mathbf{2}^{X}$,
$$\begin{array}{lllll}
\varphi_{\omega(\mathscr{C})}(U)
&=&\bigvee\limits_{a\in L}\bigvee\{ \omega(\mathscr{C})(B):
 B\in L^{X}, B_{[a]}=U\}\\
 &\geq &  \omega(\mathscr{C})(U)\\
 &=&\bigwedge\limits_{a\in L}\mathscr{C}(U_{[a]})=\mathscr{C}(U).
\end{array}$$
Thus,  $\iota(\omega(\mathscr{C}))\geq \mathscr{C}$, i.e.,  $\iota \circ \omega \geq id$.

 Conversely,
 let $U \in \mathbf{2}^{X}$ and take any $a\in L$. Then for each
$B \in L^{X}$  with  $B_{[a]} = U$,
$$\omega(\mathscr{C})(B)=\bigwedge\limits_{b\in L}\mathscr{C}(B_{[b]})\leq \mathscr{C}(U).$$
Hence,
$\varphi_{\omega(\mathscr{C})}(U)
=\bigvee\limits_{a\in L}\bigvee\{ \omega(\mathscr{C})(B):
 B\in L^{X}, B_{[a]}=U\}\leq \mathscr{C}(U)$.
It means that $\iota(\omega(\mathscr{C}))\leq \mathscr{C}$, i.e., $\iota \circ \omega \leq id$. Finally, we obtain $\iota \circ \omega = id$ by all proofs above.

Let $(X, \mathcal{C})$ be  an $(L,M)$-fuzzy convex  structure. Then $$\varphi_{\mathcal{C}}(U)
=\bigvee\limits_{a\in L}\bigvee\{\mathcal{C}(B):
 B\in L^{X}, B_{[a]}=U\}$$ for all $U \in \mathbf{2}^{X}$ and  $\iota(\mathcal{C})=\bigwedge\{\mathcal{D}: \varphi_{\mathcal{C}}\leq \mathcal{D}\in\mathfrak{\mathfrak{H}}\}$,  where  $\mathfrak{\mathfrak{H}}$ denotes all the $M$-fuzzifying  convexities on $X$.
$\forall A\in L^{X}$, by
 $$\begin{array}{lllllll}
(\omega \circ \iota (\mathcal{C}))(A)
&=&\bigwedge\limits_{a\in L}\iota (\mathcal{C})(A_{[a]})
\geq   \bigwedge\limits_{a\in L}\varphi_{\mathcal{C}}(A_{[a]})\\
&=&\bigwedge\limits_{a\in L} \bigvee\limits_{b\in L}\bigvee\{\mathcal{C}(B):
 B\in L^{X}, B_{[b]}=A_{[a]}\}\\
& \geq&  \mathcal{C}(A),
\end{array}$$
we have  $\omega \circ \iota (\mathcal{C})\geq \mathcal{C}$, i.e., $\omega \circ \iota \geq id$.
\end{proof}

\begin{theorem} Let $(X, \mathscr{C})$ be  an $M$-fuzzifying  convex  structure,  $(X, \mathcal{D})$ be  an $(L,M)$-fuzzy convex  structure  and  $f : (X, \mathscr{C}) \rightarrow  (Y, \iota (\mathcal{D}))$ be  an $M$-fuzzifying  convexity preserving function.
Then $f : (X, \omega(\mathscr{C})) \rightarrow (Y, \mathcal{D})$  is an $(L,M)$-fuzzy  convexity preserving function.
\end{theorem}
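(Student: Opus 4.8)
The plan is to prove the contrapositive-free, direct inequality that \emph{is} convexity preservation, namely $\omega(\mathscr{C})\big(f^{\leftarrow}_L(B)\big)\ge \mathcal{D}(B)$ for every $B\in L^Y$, by peeling both sides down to level sets, where the hypothesis on $f$ actually lives. First I would expand the left-hand side by the definition of $\omega$: $\omega(\mathscr{C})\big(f^{\leftarrow}_L(B)\big)=\bigwedge_{a\in L}\mathscr{C}\big((f^{\leftarrow}_L(B))_{[a]}\big)$. Then I would use the pointwise identity $(f^{\leftarrow}_L(B))_{[a]}=f^{-1}(B_{[a]})$ for every $a\in L$ (the same elementary computation already invoked in the proof that $f\colon(X,\omega(\mathscr{C}))\to(Y,\omega(\mathscr{D}))$ preserves convexity), so that $\omega(\mathscr{C})\big(f^{\leftarrow}_L(B)\big)=\bigwedge_{a\in L}\mathscr{C}\big(f^{-1}(B_{[a]})\big)$.

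Next I would feed each crisp level set $B_{[a]}\in\mathbf{2}^Y$ into the hypothesis. Since $f\colon(X,\mathscr{C})\to(Y,\iota(\mathcal{D}))$ is an $M$-fuzzifying convexity preserving function, $\mathscr{C}\big(f^{-1}(B_{[a]})\big)\ge\iota(\mathcal{D})(B_{[a]})$ for each $a$, and therefore $\omega(\mathscr{C})\big(f^{\leftarrow}_L(B)\big)\ge\bigwedge_{a\in L}\iota(\mathcal{D})(B_{[a]})$. It then remains to bound $\iota(\mathcal{D})(B_{[a]})$ from below by $\mathcal{D}(B)$, uniformly in $a$. Here I would invoke that $\iota(\mathcal{D})$ is defined as an infimum of $M$-fuzzifying convexities each dominating the subbase $\varphi_{\mathcal{D}}$, whence $\iota(\mathcal{D})\ge\varphi_{\mathcal{D}}$ pointwise; and then that $\varphi_{\mathcal{D}}(B_{[a]})=\bigvee_{c\in L}\bigvee\{\mathcal{D}(C):C\in L^Y,\ C_{[c]}=B_{[a]}\}\ge\mathcal{D}(B)$, because the choice $c=a$, $C=B$ is an admissible term of that supremum. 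Combining these gives $\bigwedge_{a\in L}\iota(\mathcal{D})(B_{[a]})\ge\mathcal{D}(B)$, hence $\omega(\mathscr{C})\big(f^{\leftarrow}_L(B)\big)\ge\mathcal{D}(B)$, which by definition says $f\colon(X,\omega(\mathscr{C}))\to(Y,\mathcal{D})$ is an $(L,M)$-fuzzy convexity preserving function.

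I do not anticipate a serious obstacle here: the statement is essentially the unit side of the claimed adjunction between $\mathbf{MYCS}$ and $\mathbf{LMCS}$, and the whole argument is bookkeeping with the cut operators $(\cdot)_{[a]}$ together with the definitions of $\omega$, $\iota$, and $\varphi_{\mathcal{D}}$. The only step that deserves a moment's attention is the passage from $\mathcal{D}(B)$ to $\varphi_{\mathcal{D}}(B_{[a]})$: one must notice that $B$ itself, paired with the matching height $c=a$, is one of the $L$-sets whose $\mathcal{D}$-value is collected in the supremum defining $\varphi_{\mathcal{D}}(B_{[a]})$; once this is observed, everything else is routine and uses no properties of $L$ or $M$ beyond those already standing in this section.
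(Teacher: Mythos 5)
Your proposal is correct and follows essentially the same chain of inequalities as the paper: expand $\omega(\mathscr{C})(f^{\leftarrow}_L(B))$ via the identity $(f^{\leftarrow}_L(B))_{[a]}=f^{-1}(B_{[a]})$, apply the hypothesis levelwise, and bound $\bigwedge_{a\in L}\iota(\mathcal{D})(B_{[a]})$ below by $\mathcal{D}(B)$. The only cosmetic difference is that the paper cites its previously established inequality $\omega\circ\iota\ge\mathrm{id}$ for that last step, whereas you reprove it inline via $\iota(\mathcal{D})\ge\varphi_{\mathcal{D}}$ and the observation that $C=B$, $c=a$ is an admissible term of the supremum defining $\varphi_{\mathcal{D}}(B_{[a]})$ --- which is exactly the paper's own proof of that lemma.
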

\begin{proof}
Since $f : (X, \mathscr{C}) \rightarrow  (Y,  \iota (\mathcal{D}))$ is an $M$-fuzzifying  convexity preserving function,   $\forall A\in \mathbf{2}^{Y}$,   $\mathscr{C}(f^{-1}(A))\geq \iota (\mathcal{D})(A)$.
 $\forall B\in L^{Y}$, by
$$\begin{array}{lllllll}
\mathcal{D}(B) &\leq &  \omega\circ \iota (\mathcal{D})(B)
&=&  \omega(\iota (\mathcal{D}))(B)\\
&=&\bigwedge\limits_{a\in L}\iota (\mathcal{D})(B_{[a]})
&\leq &\bigwedge\limits_{a\in L}\mathcal{C} (f^{-1}(B_{[a]}))\\
&=&\bigwedge\limits_{a\in L}\mathscr{C}(f^{\leftarrow}_{L}(B)_{[a]})
&=&\omega(\mathscr{C})(f^{\leftarrow}_{L}(B)),
\end{array}$$
we obtain  $f : (X, \omega(\mathscr{C})) \rightarrow (Y, \mathcal{D})$  is  an $(L,M)$-fuzzy  convexity preserving function.

\end{proof}

Based on  the above results, we finally obtain the following theorem.
\begin{theorem}\label{mnb}
There exists an adjunction between  $\mathbf{MYCS}$ and  $\mathbf{LMCS}$.
\end{theorem}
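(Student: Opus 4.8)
The plan is to package the constructions $\omega$ and $\iota$ already introduced into a pair of adjoint functors and invoke the standard characterization of adjunctions via the unit--counit (triangle) identities, or equivalently via a natural bijection of hom-sets. First I would verify that $\omega: \mathbf{MYCS}\to\mathbf{LMCS}$ is a genuine functor: on objects it sends $(X,\mathscr C)$ to $(X,\omega(\mathscr C))$, which is an $(L,M)$-fuzzy convex structure by the theorem proving $\omega(\mathscr C)$ is an $(L,M)$-fuzzy convexity; on morphisms it sends an $M$-fuzzifying convexity preserving function $f$ to the same underlying map $f$, which is $(L,M)$-fuzzy convexity preserving by the theorem comparing the two notions of preservation. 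Functoriality (preservation of identities and composites) is immediate since $\omega$ does not alter the underlying function and composition of set maps is unchanged. Symmetrically, $\iota:\mathbf{LMCS}\to\mathbf{MYCS}$ sends $(X,\mathcal C)$ to $(X,\iota(\mathcal C))$, the $M$-fuzzifying convex structure generated by the subbase $\varphi_{\mathcal C}$; I would need to check that $\iota$ acts on morphisms, i.e. that an $(L,M)$-fuzzy convexity preserving $f:(X,\mathcal C)\to(Y,\mathcal D)$ is $M$-fuzzifying convexity preserving as a map $(X,\iota(\mathcal C))\to(Y,\iota(\mathcal D))$; this follows from the subbase description of $\iota(\mathcal D)$ together with the fact that $f^{-1}$ commutes with cut operators and that preimages of subbasic elements pull back appropriately.

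Next I would assemble the adjunction data. The key relations already established are $\iota\circ\omega = \mathrm{id}_{\mathbf{MYCS}}$ and $\omega\circ\iota \ge \mathrm{id}_{\mathbf{LMCS}}$ (the latter meaning $\omega(\iota(\mathcal C))\ge\mathcal C$ pointwise, i.e. there is an identity-on-underlying-sets morphism $(X,\mathcal C)\to(X,\omega(\iota(\mathcal C)))$ in $\mathbf{LMCS}$, since a finer convexity admits the identity as a convexity preserving map into the coarser one — here one must be careful about the direction: $\omega(\iota(\mathcal C))\ge\mathcal C$ means the identity is $(L,M)$-fuzzy convexity preserving from $(X,\omega(\iota(\mathcal C)))$ to $(X,\mathcal C)$). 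I would take $\omega$ as left adjoint to $\iota$ (or the reverse, depending on the variance that makes the inequalities point the right way), with unit and counit both given by identity maps on underlying sets: the unit $\eta_{(X,\mathscr C)}:(X,\mathscr C)\to(X,\iota\omega(\mathscr C))$ is the identity, which is legitimate because $\iota\omega(\mathscr C)=\mathscr C$; the counit $\varepsilon_{(X,\mathcal C)}:(X,\omega\iota(\mathcal C))\to(X,\mathcal C)$ is the identity, legitimate because $\omega\iota(\mathcal C)\ge\mathcal C$ guarantees this identity is a morphism. The last theorem before this one — giving, for $f:(X,\mathscr C)\to(Y,\iota(\mathcal D))$ in $\mathbf{MYCS}$, an induced $f:(X,\omega(\mathscr C))\to(Y,\mathcal D)$ in $\mathbf{LMCS}$ — is precisely one half of the hom-set bijection $\mathbf{LMCS}(\omega(\mathscr C),\mathcal D)\cong\mathbf{MYCS}(\mathscr C,\iota(\mathcal D))$; the other half (sending $g:(X,\omega(\mathscr C))\to(Y,\mathcal D)$ to $g:(X,\mathscr C)\to(Y,\iota(\mathcal D))$) should follow by a parallel computation using $\iota\omega=\mathrm{id}$ and the fact that $g$ preserves the generating subbase $\varphi_{\mathcal D}$.

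I would then check naturality of this bijection in both variables — routine, since every map in sight is an identity on underlying sets and the bijection is just "regard the same set map in the other category" — and the triangle identities, which reduce to composites of identity maps and are therefore trivial once naturality and well-definedness are in place. The main obstacle, I expect, is bookkeeping the variance and the order relation on convexities correctly: one must pin down whether $\omega$ is the left or the right adjoint so that $\iota\omega=\mathrm{id}$ plays the role of (an iso refining) the unit and $\omega\iota\ge\mathrm{id}$ the role of the counit, and verify that the two displayed hom-set correspondences genuinely invert each other rather than merely giving inequalities in one direction. Concretely, the delicate point is that the assignment $f\mapsto f$ from $\mathbf{MYCS}(\mathscr C,\iota(\mathcal D))$ to $\mathbf{LMCS}(\omega(\mathscr C),\mathcal D)$ and its putative inverse must be shown to be mutually inverse using $\omega(\iota(\mathcal D))\ge\mathcal D$ in one direction and $\iota(\omega(\mathscr C))=\mathscr C$ in the other; once that round trip is confirmed to be the identity, the adjunction $\omega\dashv\iota$ (with $\iota$ full and faithful, reflecting the fact that $\mathbf{MYCS}$ sits inside $\mathbf{LMCS}$ as a coreflective-type subcategory) follows formally, and the theorem is proved.
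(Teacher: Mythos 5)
Your proposal is correct and follows essentially the route the paper intends: the paper states this theorem without an explicit proof, asserting it follows from the preceding results ($\omega(\mathscr{C})$ being an $(L,M)$-fuzzy convexity, the equivalence of morphism preservation under $\omega$, the relations $\iota\circ\omega=\mathrm{id}$ and $\omega\circ\iota\geq\mathrm{id}$, and the transfer of an $M$-fuzzifying convexity preserving $f\colon(X,\mathscr{C})\to(Y,\iota(\mathcal{D}))$ to an $(L,M)$-fuzzy one $(X,\omega(\mathscr{C}))\to(Y,\mathcal{D})$), and you assemble exactly these ingredients into the standard unit--counit/hom-set-bijection argument with $\omega\dashv\iota$. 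Your identification of the delicate points (the inverse half of the bijection via the subbase $\varphi_{\mathcal{D}}$, and the functoriality of $\iota$ on morphisms) is sound and, if anything, more explicit than the paper.
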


{\section{Conclusion}}

In this paper,  combining $L$-convex structures \cite{{Maruyama},{Rosa1},{Rosa2}} and $M$-fuzzifying convex structures \cite{Shi6} and   based on  complete  distributive  lattices $L$ and $M$,
 we present
 a more general approach to the fuzzification of
convex structures.
It is a generalization of  $L$-convex structures and $M$-fuzzifying convex structures.    Under the framework   of  $(L,M)$-fuzzy convex structures,  the concepts of   quotient structures, substructures and   products  are  presented  and  their fundamental  properties are discussed.

The notion of convexity preserving functions
 is also generalized to  lattice-valued  fuzzy setting and then an $(L,M)$-fuzzy convexity preserving function is obtained.  Thus there are two categories  $\mathbf{LMCS}$ and $\mathbf{MYCS}$,  where  $\mathbf{LMCS}$  consists of all $(L,M)$-fuzzy convex structures and of all  $(L,M)$-fuzzy convexity preserving functions,  and $\mathbf{MYCS}$  consists of all  $M$-fuzzifying convex structures and of all   $M$-fuzzifying convexity preserving functions.
  Moreover, we  create a functor $\omega$ from $\mathbf{MYCS}$
 to $\mathbf{LMCS}$ and show that there exists an adjunction
between $\mathbf{MYCS}$ and  $\mathbf{LMCS}$.

The above facts will be useful to help further investigations and it is possible that  the fuzzification of convex structure  would be applied to some problems related to the theory of abstract convexity  in the future.

%

\end{document}